 \newtheorem{thm}{Theorem}[section]
 \newtheorem{cor}[thm]{Corollary}
 \newtheorem{lem}[thm]{Lemma}
 \newtheorem{prop}[thm]{Proposition}
 \theoremstyle{definition}
 \newtheorem{defn}[thm]{Definition}
 \theoremstyle{remark}
 \newtheorem{rem}[thm]{Remark}
 \newtheorem{ex}[thm]{Example}
 \numberwithin{equation}{section}
\newcommand{\ve}[1]{ \widetilde{#1}}	
\newcommand{\conv}[1]{\widehat #1}	
\newcommand{\convm}[1]{\widetilde#1}	
\newcommand{\Hs}{\mathcal{H}}
\newcommand{\Ls}{\mathcal{L}}
\newcommand{\Bs}{\mathcal{B}}
\newcommand{\N}{\mathbb{N}}
\newcommand{\R}{\mathbb{R}}
\newcommand{\C}{\mathbb{C}}
\newcommand{\doma}{\mathcal{C}}
\newcommand{\dom}{\mathop{\mathcal{D}}}
\newcommand{\Ran}{\mathop{\rm ran}}
\newcommand{\supp}{{\rm supp\ }}
\newcommand\ip[2]{\langle #1, #2 \rangle}
\newcommand\iplr[2]{\left\langle #1, #2 \right\rangle}
\newcommand{\parti}{\slashed{\partial}}
\begin{document}

\title[Enclosure of the Numerical Range and Resolvent Estimates]{Enclosure of the Numerical Range and Resolvent Estimates of Non-selfadjoint Operator Functions}

\author{Axel Torshage} 
\address{Mathematisches Institut, Universit\"at Bern, Sidlerstr.\ 5, 3012 Bern, Switzerland}
\email{axel.torshage@math.unibe.ch}

\subjclass{47J10, 47A56, 47A12}

\keywords{Non-linear spectral problem, numerical range, joint numerical range, pseudospectrum, resolvent estimate}

\date{}
\dedicatory{}

\begin{abstract}
In this paper we discuss the relationship between the numerical range of an extensive class of unbounded operator functions and the joint numerical range of the operator coefficients. Furthermore, we derive methods on how to find estimates of the joint numerical range. Those estimates are used to obtain explicitly computable enclosures of the numerical range of the operator function and resolvent estimates. The enclosure and upper estimate of the norm of the resolvent are optimal given the estimate of the joint numerical range.
\end{abstract}

\maketitle

\section{Introduction}
Unbounded operator functions are important in many branches of physics including elasticity, fluid mechanics, and electromagnetics \cite{MR1911850,book,MR3543766}. The spectrum can be used to understand the action of selfadjoint operators but operators and operator functions in classical physics are frequently non-selfadjoint. To comprehend the stability of these systems under small perturbations we need knowledge of the pseudospectrum or the numerical range \cite{MR1335452,MR2155029,MR2359869}.

The closure of the numerical range is a classic enclosure of the spectrum, \cite{MR971506}, and for points in the resolvent set the distance from the numerical range gives an upper bound on the resolvent \cite{MR1839848}. Hence,  the numerical range is an invaluable tool to study behavior of non-selfadjoint operators and operator functions such as, estimates of the location of the spectrum, and where the resolvent is well-behaved. 

The numerical range of matrix functions is studied in \cite{MR1293915,MR1890990}, where geometric properties are investigated.
However, it is in general not possible to analytically compute the numerical range even for matrices of low dimension. Furthermore, for operator functions the numerical range is not convex or even connected in general. Hence, the existing numerical methods for approximating  the numerical range only work for the finite dimensional cases and even for low dimensional problems, the computations are very time consuming. 

In \cite{KL78} the authors present properties of the numerical range for selfadjoint quadratic operator polynomials $\lambda-A-\lambda^2 B$ using the numerical ranges of the operators $A$ and $B$.
In a related work \cite{ATEN} introduced an explicit enclosure of the numerical range $W(T)$ of operator functions of the form:  
\begin{equation}\label{1:eqAT}
	T(\omega):=A-\omega^2-\frac{1}{c-id\omega-\omega^2}B,
\end{equation}
where $A$ and $B$ are selfadjoint operators, $c\geq0$, and $d\geq0$. The presented enclosure of the numerical range is optimal given the numerical ranges of $A$ and $B$ since for each possible pair $W(A)$ and $W(B)$, there exist operators $A$ and $B$ such that the enclosure coincides with the numerical range of $T$. Although \cite{ATEN} considers a very special case, we will show in this paper that the main results for \eqref{1:eqAT} also hold in a much more general setting. We will study operator functions with an arbitrary number of possibly unbounded operator coefficients. 

In \cite{ATEN} the enclosure is deduced without taking the relationship of $A$ and $B$ into consideration.  In  \cite{DAS,HUA,GJK} the \emph{joint numerical range} is studied, which if applied to the operator coefficients, may improve the enclosure of the numerical range significantly.  However, the complexity of computing the joint numerical range of the operator coefficients is usually directly related to the complexity to compute the numerical range of the operator function. Thus to still be able take advantage of the relationship between the operator coefficients, we will in this paper provide methods to obtain outer bounds of the joint numerical range. We generalize the explicit enclosure of pseudospectrum given in \cite{ATEN}, to our operator function, subject to the outer bound of the joint numerical range, and provide a computable estimate of the norm of the resolvent.

The structure of the paper is as follows:
In Section $2$, we define the numerical range of an operator function, the corresponding joint numerical range of the operator coefficients, and describe how these sets relate to each other. For bounded operator functions the relation is trivial but if the operator function is unbounded, additional challenges arise. These challenges are especially severe if the operator function has several unbounded coefficients.
We address this in Lemma \ref{2lemclos} and Lemma \ref{2lemwta}.

In Section $3$ we introduce methods of finding non-trivial outer bounds of the joint numerical range of the operator function's coefficients. In this section we show how common relations between operators can be utilized to obtain bounds on  the joint numerical range. The main results are Proposition \ref{2transprop}, which presents results for relations involving  Borel functions defined on the operator coefficients, and Corollary \ref{3cordom} which considers cases where one operator coefficient is dominated by the other operator coefficients. 

In Section $4$ we present the  enclosure of the numerical range for operator functions and how this set can be computed from the estimates of the joint numerical range of the coefficients. Results, including how to obtain the boundary of the enclosure, are presented  for the case of two operator coefficients and analyzed in Theorem \ref{maint}. In Theorem \ref{maintmult} those results are generalized to more than two operator coefficients. 

In Section $5$ an enclosure of $\epsilon$-pseudospectrum called $\epsilon$-numerical range is given and we derive an enclosure of this set. The main results are Theorem \ref{3emaint} that shows properties of this enclosure  and Proposition \ref{3tbound} that provides an upper estimate of the resolvent that can be computed explicitly for unbounded operator functions.

Throughout this paper, we use the following notation. Let $\omega_{\Re}$ and $\omega_{\Im}$ denote the real and imaginary parts of $\omega$, respectively.  If $\mathcal{M}$ is a subset of an Euclidean space, then $\partial \mathcal{M}$ denotes the boundary of $\mathcal{M}$ and ${\rm Co}(\mathcal{M})$ denotes the convex hull of $\mathcal{M}$.
\section{Numerical range and joint numerical range}
 Let $\Hs$ denote a Hilbert space with the inner product $\ip{\cdot}{\cdot}$ and $A_j$, $j=1,\hdots,n$ denote selfadjoint operators. For simplicity we use the notation $\ve{A}:=(A_1,\hdots,A_n)$ and assume that $\dom(\ve{A}):=\dom(A_1)\cap\hdots\cap \dom(A_n)$ is dense in $\Hs$.
 
  Let $\widehat{T}:\doma\rightarrow \Ls(\Hs)$ denote the  operator function 
\begin{equation}\label{tmulthat}
	\widehat{T}(\omega):=g(\omega)+\sum_{j=1}^nA_jf^{(j)}(\omega), \quad \dom(\widehat{T}(\omega)):=\dom(\ve{A}).
\end{equation}
where  $f^{(j)},g:\mathcal{C}\rightarrow\C$, $j=1,\hdots,n$ and $\mathcal{C}$ is open and dense in $\C$. 
Assume that $\widehat{T}(\omega)$ is closable for $\omega\in\doma$ and let $T(\omega)$ denote its closure.

 Note that if $A_j$ is non-selfadjoint  and bounded in \eqref{tmulthat}, it is possible to write 
\[
	A_j=\frac{A_j+A_j^*}{2}+i\frac{A_j-A_j^*}{2i}=B_j+iC_j,
\]
where $B_j$ and $C_j$ are selfadjoint. Hence, assuming that the bounded operators in \eqref{tmulthat} are selfadjoint is no restriction. 
 
From the definition of $\dom(T(\omega))$ it follows trivially that
\begin{equation}\label{2domrel}
	\dom(T(\omega))\supset\dom(\ve{A}),\quad \omega\in\doma.
\end{equation}

The numerical range of an operator $A\in\Ls(\Hs)$ is defined as
\[
	W(A):=\{\ip{Au}{u}:u\in\dom(A), \|u\|=1\},
\]
and is thus real for selfadjoint operators. The numerical range of an operator function $T$ is the set
\begin{equation}\label{2wtdef}
W(T):=\{\omega\in \mathcal{C}: \exists u\in \dom(T(\omega))\setminus\{0\},\ \ip{T(\omega)u}{u}=0\}.
\end{equation}

In  \cite{ATEN} an enclosure of the numerical range of  \eqref{1:eqAT} is presented and evaluated. The same idea can be used for the closure of the much more general operator function \eqref{tmulthat}. To be able to obtain a tighter enclosure in the general case we first consider the joint numerical range of $\ve{A}$ defined as 
\begin{equation}\label{2defwab}
W(\ve{A}):=\left\{(\ip{A_1u}{u},\hdots,\ip{A_nu}{u}) :  u\in \dom (\ve{A}),\|u\|=1\right\}\subset  W(A_1) \times\hdots\times W(A_n).
\end{equation}
The joint numerical range is in \cite{DAS} studied for bounded operators and in \cite{HUA} for unbounded operators.

\subsection{Relation between $W(T)$ and $W(\ve{A})$} 

Let  $\ve{\alpha}:=(\alpha_1,\hdots,\alpha_n)\in \R^n$  and define the function
\begin{equation}\label{org2fun}
t_{\ve{\alpha}}(\omega):=g(\omega)+\sum_{j=1}^n{\alpha}_j f^{(j)}(\omega), \quad \omega\in\mathcal{C},
\end{equation}
where $f^{(j)}$ and $g$ are defined as in \eqref{tmulthat}. For a given set $X\subset \R^n$, let $W_X(T)\subset\doma$ denote the set
\begin{equation}\label{1encg}
	W_X(T):=\{\omega\in \mathcal{C}: \exists \ve{\alpha}\in X, t_{\ve{\alpha}}(\omega)=0\}.
\end{equation}
From this definition it follows directly that if $\dom(\ve{A})=\dom(T(\omega))$, $\omega\in\doma$ then
\[
	W(T)=W_{W(\ve{A})}(T).
\]
Hence, the numerical range of $T$ is closely related to the joint numerical range of $\ve{A}$. This was discussed by P. Psarrakos, \cite{PSA}, for operator polynomials in the finite dimensional case. These results can be straight forward generalized to bounded operator functions and operator functions with one unbounded operator  $A_j$ where $f^{(j)}(\omega)\neq0$, $\omega\in\doma$ since in these cases $\dom(\ve{A})=\dom(T(\omega))$. 

  However, for more general unbounded operator functions,  \eqref{2domrel} is often not an equality and then we only have the inclusion $W(T)\supset W_{W(\ve{A})}(T)$. 
  
\begin{rem}\label{imprem}
Even  if $X$ is closed it might not hold that $W_X(T)$ is closed. This can be seen from the example
\[
	T(\omega):=\omega A-I, \quad X:=W(A)=[1,\infty),\quad \dom (T(\omega))=\left\{\begin{array}{c c}
	\dom(A)& \omega\neq0\\
	\Hs& \omega=0
	\end{array}\right ..
\]
Then $W(T)=W_X(T)=(0,1]$, which is not a closed set. 
\end{rem}

For each set $\Omega\supset W(\ve{A})$ it follows trivially from the definition \eqref{1encg} that $W_{\Omega}(T)\supset W_{W(\ve{A})}(T)$. Hence, the goal is to find small $\Omega\supset W(\ve{A})$ such that  $W_\Omega(T)\supset W(T)$. 

\begin{lem}\label{2lemclos}
Assume that $u\in \dom(T(\omega))$, $\|u\|=1$ and $\ip{T(\omega)u}{u}=0$. Then for each $\epsilon>0$ there are $v\in \dom(\ve{A})$, $\|v\|=1$ and $\ve{\alpha}\in \overline{W(A_1)}\times\hdots\times\overline{W(A_n)}$ such that 
\begin{equation}\label{2clos}
	\sum_{j=1}^n|\ip{A_jv}{v}-\alpha_j|<\epsilon,\quad \|u-v\|<\epsilon\quad  \text{and}\quad t_{\ve{\alpha}}(\omega)=0.
\end{equation}
Furthermore, if $u\in\dom(\ve{A})$ then $\omega\in W_{W(A)}(T)$. 
\end{lem}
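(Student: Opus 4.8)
The plan is to approximate the vector $u\in\dom(T(\omega))$ by vectors in the dense subspace $\dom(\ve A)$ and track what happens to the quadratic form identity $\ip{T(\omega)u}{u}=0$ along the way. First I would exploit that $\dom(\ve A)$ is dense in $\Hs$ and, crucially, that $T(\omega)$ is the closure of $\widehat T(\omega)=g(\omega)+\sum_j A_j f^{(j)}(\omega)$ on $\dom(\ve A)$; hence there is a sequence $(v_k)\subset\dom(\ve A)$ with $v_k\to u$ in $\Hs$ and $\widehat T(\omega)v_k=T(\omega)v_k+o(1)\to T(\omega)u$. After normalizing (replacing $v_k$ by $v_k/\|v_k\|$, which is harmless since $\|v_k\|\to1$), the continuity of the inner product gives $\ip{\widehat T(\omega)v_k}{v_k}\to\ip{T(\omega)u}{u}=0$. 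So for $k$ large we can take $v:=v_k$ and we already have $\|u-v\|<\epsilon$ and $\ip{\widehat T(\omega)v}{v}$ as small as we like.

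The second step is to read off the right $\ve\alpha$. Writing out $\ip{\widehat T(\omega)v}{v}=g(\omega)+\sum_{j=1}^n \ip{A_j v}{v}\, f^{(j)}(\omega)$, the natural candidate is $\alpha_j:=\ip{A_j v}{v}$, which lies in $W(A_j)\subset\overline{W(A_j)}$ by definition since $v\in\dom(\ve A)$ and $\|v\|=1$; then automatically $\sum_j|\ip{A_j v}{v}-\alpha_j|=0<\epsilon$. With this choice $t_{\ve\alpha}(\omega)=\ip{\widehat T(\omega)v}{v}$, which is small but not exactly zero. The remaining issue is therefore to \emph{correct} $\ve\alpha$ slightly so that $t_{\ve\alpha}(\omega)=0$ exactly, while keeping $\ve\alpha$ in $\overline{W(A_1)}\times\dots\times\overline{W(A_n)}$ and $\sum_j|\ip{A_j v}{v}-\alpha_j|<\epsilon$. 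Here I would distinguish cases according to whether some $f^{(j)}(\omega)\neq0$: if all $f^{(j)}(\omega)=0$ then $t_{\ve\alpha}(\omega)=g(\omega)$ is forced, and the smallness $|g(\omega)|<\epsilon$ together with $\ip{T(\omega)u}{u}=g(\omega)\|u\|^2$ already forces $g(\omega)=0$, so nothing to correct. Otherwise pick an index $\ell$ with $f^{(\ell)}(\omega)\neq0$ and adjust only $\alpha_\ell$, setting $\alpha_\ell:=\ip{A_\ell v}{v}-\ip{\widehat T(\omega)v}{v}/f^{(\ell)}(\omega)$; this makes $t_{\ve\alpha}(\omega)=0$ and, since $\ip{\widehat T(\omega)v}{v}\to0$, the perturbation $|\alpha_\ell-\ip{A_\ell v}{v}|$ is $<\epsilon$ for $k$ large. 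One must check $\alpha_\ell\in\overline{W(A_\ell)}$: if $W(A_\ell)$ is unbounded this follows from the perturbation being small only when $\ip{A_\ell v}{v}$ stays in a region from which a small move keeps us in the closure — but $\overline{W(A_\ell)}$ is a closed interval (or half-line, or all of $\R$), and a point of $W(A_\ell)$ moved by less than $\epsilon$ is still within $\epsilon$ of $\overline{W(A_\ell)}$; to land \emph{inside} $\overline{W(A_\ell)}$ one uses that $\overline{W(A_\ell)}$ being a convex subset of $\R$ equals $[\,a,b\,]$ with $a,b\in\{\pm\infty\}\cup\R$, and if the corrected $\alpha_\ell$ overshoots an endpoint one instead distributes the correction, or simply notes that for an interval the corrected value can be chosen on the segment between $\ip{A_\ell v}{v}$ and the true root, which stays in the interval. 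This endpoint bookkeeping is the main obstacle and the place where the hypotheses on closure and density are really used.

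For the final sentence, suppose in addition $u\in\dom(\ve A)$. Then $\widehat T(\omega)u=T(\omega)u$ literally, so $0=\ip{T(\omega)u}{u}=g(\omega)+\sum_{j=1}^n\ip{A_j u}{u}f^{(j)}(\omega)$. Setting $\beta_j:=\ip{A_j u}{u}$ we get $\ve\beta\in W(\ve A)\subset W(A_1)\times\dots\times W(A_n)$ with $t_{\ve\beta}(\omega)=0$, which is exactly the statement $\omega\in W_{W(A)}(T)$ by the definition \eqref{1encg} of $W_X(T)$ with $X=W(\ve A)$. I would also remark that in this case $\epsilon$ plays no role: the conclusion $\omega\in W_{W(A)}(T)$ is immediate and no approximation is needed.
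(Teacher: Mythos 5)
Your first paragraph and your treatment of the final sentence are fine and agree with the paper: take unit vectors $v_i\in\dom(\ve{A})$ with $v_i\to u$ and $\ip{\widehat T(\omega)v_i}{v_i}\to\ip{T(\omega)u}{u}=0$, and read off candidate coordinates $\ip{A_jv_i}{v_i}\in W(A_j)$. The gaps are all in the correction step, and they are genuine. First, the correction you propose, $\alpha_\ell:=\ip{A_\ell v}{v}-\ip{\widehat T(\omega)v}{v}/f^{(\ell)}(\omega)$, is a \emph{complex} number in general, whereas $\alpha_\ell$ must be real (the $\alpha_j$ range over subsets of $\R$). The equation $t_{\ve\alpha}(\omega)=0$ is one complex, i.e.\ two real, conditions, so generically you must adjust two coordinates $k,l$ with $f^{(k)}(\omega),f^{(l)}(\omega)$ linearly independent over $\R$, and treat separately the degenerate case where all $f^{(j)}(\omega)$ lie on one real line through the origin; this is exactly the case split (via $f^{(j)}(\omega)=\kappa_jf^{(k)}(\omega)+\gamma_jf^{(l)}(\omega)$) that occupies most of the paper's proof.

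Second, and more seriously, you never resolve what you yourself call the main obstacle: keeping the corrected coordinate inside $\overline{W(A_\ell)}$. ``Within $\epsilon$ of the closure'' is not membership, and ``distribute the correction'' or ``move along the segment towards the true root'' are not arguments --- for $n>2$ one can arrange that correcting a badly chosen pair of coordinates always exits the box even though the residual tends to zero. The mechanism that actually closes this gap (and which your proposal never touches) is a dichotomy on the sequence $\ip{A_jv_i}{v_i}$: after passing to a subsequence, each such sequence either converges, in which case you set $\alpha_j$ equal to its limit (automatically in $\overline{W(A_j)}$, no correction needed), or it has no convergent subsequence, i.e.\ $|\ip{A_jv_i}{v_i}|\to\infty$. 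The divergent indices with $f^{(j)}(\omega)\neq0$ must occur in groups of at least two with cancelling contributions, the whole residual is absorbed by correcting \emph{those} coordinates, and precisely because they diverge they are eventually far from any finite endpoint of $W(A_j)$, so a small correction stays in the closure. This unbounded--cancellation scenario is the very situation the lemma exists to handle (several unbounded coefficients, $\dom(T(\omega))\supsetneq\dom(\ve A)$), and omitting it leaves the proof incomplete. Your argument for the last claim ($u\in\dom(\ve A)\Rightarrow\omega\in W_{W(\ve A)}(T)$) is correct.
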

\begin{proof}
Let, $\{v_i\}_{i=1}^{\infty}$ be a sequence of unit vectors in $\dom(\ve{A})$ such that $\lim_{i\rightarrow \infty}\ip{T(\omega)v_i}{v_i}=0$ and $\lim_{i\rightarrow \infty}\|u-v_i\|=0$. For $j\in\{1,\hdots n\}$ we can assume without loss of generality that if $\lim_{i\rightarrow\infty}\ip{A_jv_i}{v_i}$ does not exist, then $\{\ip{A_jv_i}{v_i}\}_{i=1}^{\infty}$ has no converging subsequence.
Let  $J$ denote the set $J:=\{j\in \{1,\hdots,n\}: \lim_{i\rightarrow\infty} \ip{A_jv_i}{v_i}\text{ does not exist}\}$. For $j\in\{1,\hdots,n\}\setminus J$ define $\alpha_j:=\lim_{i\rightarrow\infty} \ip{A_jv_i}{v_i}\in \overline{W(A_j)}$. Let $i_1$ be a constant such that
\begin{equation}\label{2clos1}
	\sum_{j\in \{1,\hdots,n\}\setminus J} |\ip{A_jv_i}{v_i}-\alpha_j|<\frac{\epsilon}{2},\quad \|u-v_i\|<\epsilon  \quad\text{for} \quad i\geq  i_1,
\end{equation}
and define the constant
\[
	K:=-\sum_{j\in \{1,\hdots,n\}\setminus J}f^{(j)}(\omega)\alpha_j.
\]
Then,
\[
	\ip{T(\omega)u}{u}=\iplr{\overline{\sum_{j\in J_0}f^{(j)}(\omega)A_j}u}{u }-K=0,\quad J_0:=\{j\in J: f^{(j)}(\omega)\neq0\}.
\] 
We then obtain
\[
	K=\lim_{i\rightarrow \infty}\sum_{j\in J_0}f^{(j)}(\omega)\alpha_j^{(i)},\quad \alpha_j^{(i)}:=\ip{A_jv_i}{v_i}.
\]
As $\lim_{i\rightarrow \infty}\ip{A_jv_i}{v_i}$ does not exist for $j\in J_0$ and that $f^{(j)}(\omega)\neq0$ for $j\in J_0$ it follows that $J_0$ either is empty or has at least two elements. 

If $J_0$ is empty define  $v:=v_{i_1}$ and $\alpha_j=\ip{A_jv}{v}$ for $j\in J$. It then follows that $K=0$, and that \eqref{2clos} holds from \eqref{2clos1}.

If there are at least $2$ elements in $J_0$ then since $\C$ can be seen as a two dimensional vectors space over the field $\R$, it follows that for some $k,l\in J_0$ there are constants $\kappa_j,\gamma_j\in \R$, $j\in J_0$ such that $f^{(j)}(\omega)=\kappa_jf^{(k)}(\omega)+\gamma_jf^{(l)}(\omega)$. Here we choose $\kappa_k=\gamma_l=1$ and $\kappa_l=\gamma_k=0$. Thus
\begin{equation}\label{2sumclos}
K=\lim_{i\rightarrow \infty}\left(f^{(k)}(\omega)\sum_{j\in J_0\setminus\{l\}}\kappa_j\alpha_j^{(i)}+f^{(l)}(\omega)\sum_{j\in J_0\setminus\{k\}}\gamma_j\alpha_j^{(i)}\right).
\end{equation}
Since, $K$ is written as the limit of two sums, either both sums converge to a bounded limit or neither of the sums have a limit.  

First consider the case when both sum converge, then  $K=f^{(k)}(\omega)K_k+f^{(l)}(\omega)K_l$ where, $K_k$ and $K_l$ are the real numbers
\begin{equation}\label{2twoconv}
K_k=\lim_{i\rightarrow \infty}\sum_{j\in J_0\setminus\{l\}}\kappa_j\alpha_j^{(i)},\quad  K_l=\lim_{i\rightarrow \infty} \sum_{j\in J_0\setminus\{k\}}\gamma_j\alpha_j^{(i)}.
\end{equation}
Let $i_2\geq i_1$ denote a number such that
\begin{equation}\label{2clos2}
\left |\alpha_{k}^{(i_2)}-\left(K_k-\sum_{j\in J_0\setminus\{k,l\}}\kappa_j\alpha_j^{(i_2)}\right)\right|+\left |\alpha_{l}^{(i_2)}-\left(K_l-\sum_{j\in J_0\setminus\{k,l\}}\gamma_j\alpha_j^{(i_2)}\right)\right|<\frac{\epsilon}{2},
\end{equation}
and
\[
K_k-\sum_{j\in J_0\setminus\{k,l\}}\kappa_j\alpha_j^{(i_2)}\in \overline{W(A_k)}, \quad K_l-\sum_{j\in J_0\setminus\{k,l\}}\gamma_j\alpha_j^{(i_2)}\in \overline{W(A_l)}.
\]
An $i_2$ satisfying the latter properties exists  due to the limit \eqref{2twoconv} and that there are no subsequence of $\{\alpha_{k}^{(i)}\}_{i=1}^{\infty}$ converging to endpoints of $\overline{W(A_k)}$  or  $\{\alpha_{l}^{(i)}\}_{i=1}^\infty$ converging to endpoints of $\overline{W(A_l)}$. Define $\alpha_j:=\alpha_j^{(i_2)}$ for $j\in J\setminus \{k,l\}$ and 
\[
	\alpha_k:=K_k-\sum_{j\in J_0\setminus\{k,l\}}\kappa_j\alpha_j,\quad \alpha_l:=K_l-\sum_{j\in J_0\setminus\{k,l\}}\gamma_j\alpha_j.
\]
It then follows that $\ve{\alpha}\in \overline{W(A_1)}\times\hdots\times\overline{W(A_n)}$, $t_{\ve{\alpha}}(\omega)=0$, \eqref{2clos1} and \eqref{2clos2} yield that \eqref{2clos} holds for $v:=v_{i_2}$. 

Now consider the case when the two sums in \eqref{2sumclos} do not converge. Then
\[
	{\rm Im} \left(\frac{K}{f^{(l)}(\omega)}\right)=\lim_{i\rightarrow \infty}{\rm Im}\left(\frac{f^{(k)}(\omega)}{f^{(l)}(\omega)}\right)\sum_{j\in J_0}\kappa_j\alpha_j^{(i)}.
\]
Since the sum does not converge it follows that $f^{(k)}(\omega)=r f^{(l)}(\omega)$ for some $r\in \R\setminus\{0\}$ and consequently
\[
	\frac{K}{f^{(l)}(\omega)}=\lim_{i\rightarrow \infty} \sum_{j\in J_0}(r\kappa_j+\gamma_j)\alpha_j^{(i)}.
\]

The result is now shown similarly as in the case when the sums converges: let  $i_2\geq i_1$ denote a number such that
\begin{equation}\label{2clos3}
	\left |\alpha_{l}^{(i_2)}-\left(\frac{K}{f^{(l)}(\omega)}-\sum_{j\in J_0\setminus\{l\}}(r\kappa_j+\gamma_j)\alpha_j^{(i_2)}\right)\right|<\frac{\epsilon}{2},
\end{equation}
and
\[
	\alpha_l:= \frac{K}{f^{(l)}(\omega)}-\sum_{j\in J_0\setminus\{l\}}(r\kappa_j+\gamma_j)\alpha_j^{(i_2)}\in \overline{W(A_l)}.
\]
Define $\alpha_j:=\alpha_j^{(i_2)}$ for $j\in J\setminus \{l\}$  and $v:=v_{i_2}$,  \eqref{2clos} is then obtained from \eqref{2clos1} and \eqref{2clos3}. The last statement of the lemma follows direct from definition. 
\end{proof}

\begin{cor}\label{2corclos}
Let $\epsilon>0$ and define the set
\[
	\Omega:=\left\{\ve{\alpha}\in \overline{W(A_1)}\times\hdots\times \overline{W(A_n)}: \exists \ve{\alpha}'\in \overline{W(A_1,\hdots,A_n)}, \sum_{j=1}^n |\alpha_j-\alpha_j'|\leq\epsilon\right\},
\]
then $W_\Omega(T)\supset W(T)$. 
\end{cor}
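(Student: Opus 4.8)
The plan is to obtain this directly from Lemma \ref{2lemclos}; the corollary is essentially a repackaging of that lemma in terms of an approximate joint numerical range, so almost no new work is needed. First I would fix an arbitrary $\omega\in W(T)$. By the definition \eqref{2wtdef} there is a unit vector $u\in\dom(T(\omega))$ with $\ip{T(\omega)u}{u}=0$, so the hypotheses of Lemma \ref{2lemclos} are satisfied. Applying the lemma with the given $\epsilon>0$ produces a unit vector $v\in\dom(\ve{A})$ and a tuple $\ve{\alpha}\in\overline{W(A_1)}\times\hdots\times\overline{W(A_n)}$ with $\sum_{j=1}^n|\ip{A_jv}{v}-\alpha_j|<\epsilon$ and $t_{\ve{\alpha}}(\omega)=0$ (the accompanying estimate $\|u-v\|<\epsilon$ will not be needed).

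Next I would observe that $\ve{\alpha}':=(\ip{A_1v}{v},\hdots,\ip{A_nv}{v})\in W(\ve{A})$ by the definition \eqref{2defwab}, since $v$ is a unit vector in $\dom(\ve{A})$; in particular $\ve{\alpha}'\in\overline{W(A_1,\hdots,A_n)}$. Together with $\ve{\alpha}\in\overline{W(A_1)}\times\hdots\times\overline{W(A_n)}$ and the bound $\sum_{j=1}^n|\alpha_j-\alpha_j'|<\epsilon\leq\epsilon$, this shows $\ve{\alpha}\in\Omega$. Since $t_{\ve{\alpha}}(\omega)=0$ with $\ve{\alpha}\in\Omega$, the definition \eqref{1encg} yields $\omega\in W_\Omega(T)$, and as $\omega\in W(T)$ was arbitrary we conclude $W(T)\subset W_\Omega(T)$.

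I do not expect a genuine obstacle here: all of the analytic content — the diagonal-subsequence argument isolating the coefficients whose quadratic forms diverge, and the real-linear decomposition of the $f^{(j)}(\omega)$ used to push the perturbed tuple back into $\overline{W(A_1)}\times\hdots\times\overline{W(A_n)}$ while preserving $t_{\ve{\alpha}}(\omega)=0$ — is already carried out inside Lemma \ref{2lemclos}. The only thing worth checking is the bookkeeping of the constants: the lemma delivers the strict estimate $\sum_j|\ip{A_jv}{v}-\alpha_j|<\epsilon$, which is compatible with the non-strict defining inequality $\sum_j|\alpha_j-\alpha_j'|\leq\epsilon$ in $\Omega$, so there is no need to shrink $\epsilon$ and the argument closes at once.
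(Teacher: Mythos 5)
Your proof is correct and is exactly the unpacking of the paper's one-line argument (``Immediate from Lemma \ref{2lemclos}''): apply the lemma to a nullvector $u$ of $\ip{T(\omega)\cdot}{\cdot}$, take $\ve{\alpha}'=(\ip{A_1v}{v},\hdots,\ip{A_nv}{v})\in W(\ve{A})$ as the witness placing $\ve{\alpha}$ in $\Omega$, and conclude from $t_{\ve{\alpha}}(\omega)=0$. No gaps; the strict-versus-non-strict inequality point you flag is handled correctly.
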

\begin{proof}
Immediate from Lemma \ref{2lemclos}.
\end{proof}

\begin{ex}
In many cases it is easier to construct an $\Omega\supset W(\ve{A})$ that differs from the one given in Corollary \ref{2corclos}. Assume that $A_j$ in \eqref{tmulthat}  is bounded for $j>1$. Then $\dom (\ve{A})=\dom (A_1)$ and from definition of $T$ it follows that 
\[
	\dom (T(\omega))=\left\{\begin{array}{r r}
	\dom (A_1),& f^{(1)}(\omega)\neq 0\\
	\Hs,& f^{(1)}(\omega)=0\\
	\end{array}\right. .
\]
Hence,  $W(T)=W_{W(\ve{A})}(T)$  on the set $\Gamma:=\{\omega\in\doma :f^{(1)}(\omega)\neq0\}$ and for $\alpha_1\in W(A_1)$ we have that
\[
	W(T)\setminus \Gamma=W_{\{\alpha_1\}\times W(\ve{A}_1)}(T)\setminus\Gamma,\quad \quad \ve{A}_1:=(A_2,\hdots,A_n).
\]
This means that if
\[
	\Omega\supset W(A_1,\hdots,A_n)\cup\left\{(\alpha_1,\ip{A_2u}{u},\hdots,\ip{A_nu}{u}): u\notin \dom (A_1), \|u\|=1\right\},
\]
for any $\alpha_1\in W(A_1)$, then $W(T)\subset W_\Omega(T)$. However, in this example it is best to investigate the values of $\omega$ such that $f^{(1)}(\omega)\neq 0$ and $f^{(1)}(\omega)=0$ separately since then $\dom(T(\omega))$ is constant on the two parts. For values of $\omega$ such that $f^{(1)}(\omega)=0$, we even have that the operator function $T(\omega)$ is  bounded. Additionally, if $f^{(1)}(\omega)=0$, $f^{(1)}$ is holomorphic at $\omega$, and $g$, $f^{(j)}$, $j=2,\hdots,n$ are continuous in  $\omega$ it follows that $\omega\in \overline {W(T)}$. Hence, in many common cases, the values of $\omega$ such that $f^{(1)}(\omega)=0$ are easily investigated. However, if there are more than one unbounded operator this method is not applicable directly. 
\end{ex}

In the case when the functions $g$, $f^{(j)}$, $j=1,\hdots,n$ are holomorphic functions the result of Lemma \ref{2lemclos} can be improved. 

\begin{lem}\label{2lemwta}
Let $T$ denote the closure of the operator function \eqref{tmulthat}, let $t_{\ve{\alpha}}$ be defined by \eqref{org2fun} and denote by $W(T)$ and $W_{W(\ve{A})}(T)$ the sets \eqref{2wtdef} and \eqref{1encg}, respectively. Let $H\subset\doma$ denote the set where $g$, $f^{(j)}$, $j=1,\hdots,n$ are holomorphic and linearly independent. Then $\overline{W(T)}\cap H=\overline{W_{W(\ve{A})}(T)}\cap H$. 
\end{lem}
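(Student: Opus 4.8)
The plan is to prove the two inclusions separately, the non‑trivial one being $\overline{W(T)}\cap H\subset\overline{W_{W(\ve{A})}(T)}\cap H$. The easy inclusion $\overline{W_{W(\ve{A})}(T)}\cap H\subset\overline{W(T)}\cap H$ follows at once from $W_{W(\ve{A})}(T)\subset W(T)$: if $\omega\in W_{W(\ve{A})}(T)$ then $t_{\ve{\alpha}}(\omega)=0$ for some $\ve{\alpha}=(\ip{A_1u}{u},\dots,\ip{A_nu}{u})$ with $u\in\dom(\ve{A})$, $\|u\|=1$, and since $\dom(\ve{A})\subset\dom(T(\omega))$ this gives $\ip{T(\omega)u}{u}=t_{\ve{\alpha}}(\omega)=0$; taking closures and intersecting with $H$ gives the claim. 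For the reverse inclusion I would first note that $H$ is open: the set where $g,f^{(1)},\dots,f^{(n)}$ are all holomorphic is open, and on a connected subset of it the family is, by the identity theorem, linearly independent either everywhere or nowhere. Hence it suffices to prove the implication $\omega_0\in W(T)\cap H\ \Rightarrow\ \omega_0\in\overline{W_{W(\ve{A})}(T)}$; the general case $\omega_0\in\overline{W(T)}\cap H$ then follows by approximating $\omega_0$ by points of $W(T)$ which eventually lie in $H$, together with $\overline{W_{W(\ve{A})}(T)}$ being closed.

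So, fixing $\omega_0\in W(T)\cap H$, the next step is to produce a sequence $\beta^{(k)}\in W(\ve{A})$ with $t_{\beta^{(k)}}(\omega_0)\to0$. This is exactly what Lemma \ref{2lemclos} provides with $\epsilon=1/k$: it yields a unit vector $v_k\in\dom(\ve{A})$ and $\ve{\alpha}^{(k)}\in\overline{W(A_1)}\times\dots\times\overline{W(A_n)}$ with $t_{\ve{\alpha}^{(k)}}(\omega_0)=0$ and $\sum_j|\ip{A_jv_k}{v_k}-\alpha^{(k)}_j|<1/k$, so that $\beta^{(k)}:=(\ip{A_1v_k}{v_k},\dots,\ip{A_nv_k}{v_k})\in W(\ve{A})$ satisfies
\[
\bigl|t_{\beta^{(k)}}(\omega_0)\bigr|=\Bigl|\textstyle\sum_j(\beta^{(k)}_j-\alpha^{(k)}_j)f^{(j)}(\omega_0)\Bigr|\le\tfrac1k\max_{1\le j\le n}\bigl|f^{(j)}(\omega_0)\bigr|\longrightarrow0 .
\]
(One may also bypass the lemma and use that $\dom(\ve{A})$ is a core for $T(\omega_0)$, which directly produces unit vectors $v_k$ with $\ip{T(\omega_0)v_k}{v_k}=t_{\beta^{(k)}}(\omega_0)\to0$.)

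The heart of the argument is to convert $t_{\beta^{(k)}}(\omega_0)\to0$ into \emph{zeros} of $t_{\beta^{(k)}}$ converging to $\omega_0$. The obstruction — and the reason the hypothesis on $H$ is needed — is that when some $A_j$ is unbounded the joint numerical range $W(\ve{A})$, hence the sequence $\beta^{(k)}$, may be unbounded, so $t_{\beta^{(k)}}$ need not converge. I would therefore fix $\rho_0>0$ with the closed disc $\{\,|\omega-\omega_0|\le\rho_0\,\}\subset H$, set $r_k:=1+\sum_j|\beta^{(k)}_j|\ge1$, and rescale to $h_k:=t_{\beta^{(k)}}/r_k=r_k^{-1}g+\sum_j(\beta^{(k)}_j/r_k)f^{(j)}$. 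The coefficient vectors of $h_k$ have $\ell^1$-norm $1$, so along a subsequence they converge to some $(c_0,c_1,\dots,c_n)\neq0$ and $h_k\to h:=c_0g+\sum_jc_jf^{(j)}$ uniformly on that closed disc. Now the linear independence of the \emph{whole} family $g,f^{(1)},\dots,f^{(n)}$ on $H$ (crucially including $g$, so that $c_0\neq0$ alone cannot force $h\equiv0$), together with $(c_0,\dots,c_n)\neq0$, shows $h\not\equiv0$; moreover $h$ is holomorphic on the disc and $h(\omega_0)=\lim_kh_k(\omega_0)=0$ since $t_{\beta^{(k)}}(\omega_0)\to0$ while $r_k\ge1$. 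Thus $\omega_0$ is an isolated zero of $h$, and applying Hurwitz's theorem (or Rouché) on a shrinking family of circles about $\omega_0$, the functions $h_k$ — and hence $t_{\beta^{(k)}}$, which has the same zeros — eventually have a zero $\omega'_k$ with $\omega'_k\to\omega_0$. Since $\beta^{(k)}\in W(\ve{A})$ and $t_{\beta^{(k)}}(\omega'_k)=0$, each $\omega'_k\in W_{W(\ve{A})}(T)$, so $\omega_0\in\overline{W_{W(\ve{A})}(T)}$, completing the proof.

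The step I expect to be the main obstacle is precisely this last one: the possible unboundedness of $W(\ve{A})$ blocks a direct passage to the limit in $t_{\beta^{(k)}}$, and it is the combination of holomorphy, the normalisation by $r_k$, and linear independence of the full family $\{g,f^{(1)},\dots,f^{(n)}\}$ that rescues it by producing a genuine nonzero holomorphic limit to which Hurwitz's theorem applies. Secondary care is needed to verify that $H$ is open (so that the reduction to $\omega_0\in W(T)\cap H$ is legitimate) and in the bookkeeping showing that the Hurwitz zeros $\omega'_k$ actually converge to $\omega_0$ rather than merely lying in a fixed small disc.
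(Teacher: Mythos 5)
Your proposal is correct, and its skeleton coincides with the paper's: the trivial inclusion from $\dom(\ve{A})\subset\dom(T(\omega))$, then Lemma \ref{2lemclos} to manufacture $\beta^{(k)}\in W(\ve{A})$ with $t_{\beta^{(k)}}(\omega_0)\to0$, then holomorphy plus linear independence to upgrade approximate zeros to nearby exact zeros. Where you genuinely diverge is in that last, decisive step. The paper argues on the Taylor coefficients of $t_{\beta^{(k)}}$ at $\omega_0$: either some derivative $t^{(N)}_{\beta^{(k)}}(\omega_0)$ stays bounded away from zero along a subsequence, in which case a zero is claimed to appear in any $\epsilon$-disc for $k$ large, or all derivatives tend to zero, in which case $t_{\beta^{(k)}}\to0$ is asserted and linear independence is invoked for a contradiction. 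Both halves of that dichotomy are stated without the uniform control that makes them work, and the second half tacitly needs the coefficient sequence to behave. Your normalisation by $r_k=1+\sum_j|\beta_j^{(k)}|$ supplies exactly that missing control: it forces a subsequential limit $(c_0,\dots,c_n)$ of unit $\ell^1$-norm, hence a nonzero holomorphic limit function $h$ with $h(\omega_0)=0$, to which Hurwitz applies cleanly. So your route is the more rigorous implementation of the same idea, and it isolates precisely why unboundedness of $W(\ve{A})$ is the real obstruction; the paper's version is shorter but leaves these compactness issues implicit. Two small points you handle that the paper passes over silently, and which are worth keeping: the openness of $H$ (needed to reduce from $\overline{W(T)}\cap H$ to $W(T)\cap H$), and the fact that the Hurwitz zeros land inside a disc contained in $H\subset\doma$, so they are legitimately elements of $W_{W(\ve{A})}(T)$.
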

\begin{proof}
Since $\dom (\ve{A})\subset\dom(T(\omega))$ for all $\omega$, it follows that $\overline{W(T)}\cap H\supset\overline{W_{W(\ve{A})}(T)}\cap H$. Hence, we only have to show the converse. Assume $\omega\in W(T)\cap H$,  and $u\in \dom(T(\omega))$ is a unit vector such that $\ip{T(\omega)u}{u}=0$, then due to Lemma \ref{2lemclos} there is a sequence unit   vectors of  $\{v_i\}_{i=1}^\infty\in \dom(\ve{A})$ and $\ve{\alpha}_i=(\alpha_1^{(i)},\hdots,\alpha_n^{(i)})$ such that

\begin{equation}\label{2closr}
	\sum_{j=1}^n|\ip{A_jv_i}{v_i}-\alpha_j^{(i)}|<\frac{1}{i},\quad \|u-v_i\|<\frac{1}{i}\quad  \text{and}\quad t_{\ve{\alpha}_i}(\omega)=0.
\end{equation}
This implies  that
\[
	t_{\ip{\ve{A}v_i}{v_i}}(\omega)=t_{\ip{\ve{A}v_i}{v_i}}(\omega)- t_{\ve{\alpha}_i}(\omega)=\sum_{j=1}^nf^{(j)}(\omega)\left(\ip{A_jv_i}{v_i}-\alpha_j^{(i)}\right).
\]
If $t_{\ip{\ve{A}v_i}{v_i}}(\omega)=0$, for some $i$ we are done. Otherwise $t_{\ip{\ve{A}v_i}{v_i}}$ is a holomorphic non-constant  function for all $i$ and $t_{\ip{\ve{A}v_i}{v_i}}(\omega)$ is arbitrary small by choosing $i$ large enough. Assume that there exist a smallest  $N\in \N$ such that $|t^{(N)}_{\ip{\ve{A}v_i}{v_i}}(\omega)|\nrightarrow 0$, where $N$ denotes de number of derivatives in $\omega$.
Then since $t_{\ip{\ve{A}v_i}{v_i}}$ is holomorphic it follows that for each $\epsilon>0$ there is an $i$ (large enough) such that for some $\omega'$ satisfying $|\omega'-\omega|<\epsilon$ we have  $t_{\ip{\ve{A}v_i}{v_i}}(\omega')=0$. Hence, the result holds. Now assume that no such $N$ exists. Since the function is holomorphic it follows that $\lim_{i\rightarrow \infty}t_{\ip{\ve{A}v_i}{v_i}}\rightarrow 0$. This leads to a contradiction since $g$ and $f^{(j)}$, $j=1,\hdots, n$ are supposed to be linearly independent. 
\end{proof}

Since $\overline{W_{W(\ve{A})}(T)}=\overline{W(T)}$ in many important cases, such as bounded or holomorphic $T$, we will in the following study enclosures of the sets $W(\ve{A})$ and $\overline{W_{W(\ve{A})}(T)}$ instead of $\overline{W(T)}$. Furthermore, if $T$ is unbounded and non-holomorphic, Lemma \ref{2lemclos} states that it is enough to take an arbitrarily small neighborhood of $W(\ve{A})$, to obtain an enclosure of $W(T)$. 

\subsection{Convex hull of $W(\ve{A})$}
Many studies of the joint numerical range are interested in determining conditions for convexness in $\R^n$,  \cite{DAS, HUA,GJK}. Assume that $\ve{A}=(A_1,A_2)$, where $A_1,A_2$ are selfadjoint and consider $W(\ve{A})$, \eqref{2defwab}. Since $A_1$ and $A_2$ are selfadjoint there is a natural one-to-one correspondence between the numerical range of $A_1+iA_2$ in $\C$ and the joint numerical range of $A_1$ and $A_2$ in $\R^2$. Hence, the joint numerical range of $A_1$ and $A_2$ is convex since the numerical range of an operator is convex.
However, if $\ve{A}=(A_1,\hdots,A_n)$, $n>3$, the joint numerical range is not convex in general, \cite{GJK}. In the case $n=3$ the joint numerical range is convex if $\infty>\dim \Hs>2$, but if $\dim \Hs=2$, this does not necessarily hold. Take for example
\[
	A_1=\begin{bmatrix}
	1& 0\\
	0&-1
	\end{bmatrix},\quad
	A_2=\begin{bmatrix}
	0& 1\\
	1&0
	\end{bmatrix},\quad
	A_3=\begin{bmatrix}
	0& i\\
	-i&0
	\end{bmatrix},
\]
then $W(A_1,A_2,A_3)=\{\ve{\alpha}\in\R^3:\|\ve{\alpha}\|=1\}$, which is the (non-convex) boundary of the unit sphere. 

However, Proposition \ref{2Psats} yields that it is for $W_{W(\ve{A})}(T)$ ultimately irrelevant if $W(\ve{A})$ is convex or not. For operator polynomial $T$ on $\Hs$ of finite dimension, the proposition was shown in \cite[Proposition 2.1]{PSA} and the generalization to our case is straightforward.

\begin{prop}\label{2Psats}
Let $T$ be defined as the closure of \eqref{tmulthat} and let $W(\ve{A})$  and $W_X(T)$ be defined as  \eqref{2defwab} and \eqref{1encg}, respectively. Then
\[	
	W_{W(\ve{A})}(T)=W_{{\rm Co}(W(\ve{A}))}(T),
\]
where ${\rm Co}(W(\ve{A}))$ denotes the convex hull of $W(\ve{A})$.
\end{prop}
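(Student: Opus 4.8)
The plan is to reduce the identity to two ingredients: that an affine map commutes with taking convex hulls, and that the numerical range of a single (generally unbounded, non-selfadjoint) auxiliary operator is convex.

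Fix $\omega\in\mathcal{C}$ and consider the affine map $\Phi_\omega\colon\R^n\to\C$ given by $\Phi_\omega(\ve{\gamma})=t_{\ve{\gamma}}(\omega)=g(\omega)+\sum_{j=1}^n\gamma_j f^{(j)}(\omega)$. Since an affine map sends the convex hull of any set to the convex hull of its image, $\Phi_\omega\big({\rm Co}(W(\ve{A}))\big)={\rm Co}\big(\Phi_\omega(W(\ve{A}))\big)$. Hence everything reduces to showing that $\Phi_\omega(W(\ve{A}))$ is itself convex: if so, then $\Phi_\omega\big({\rm Co}(W(\ve{A}))\big)=\Phi_\omega(W(\ve{A}))$, so $0\in\Phi_\omega\big({\rm Co}(W(\ve{A}))\big)$ if and only if $0\in\Phi_\omega(W(\ve{A}))$, and collecting over all $\omega$ this is, by \eqref{1encg}, precisely $W_{{\rm Co}(W(\ve{A}))}(T)=W_{W(\ve{A})}(T)$.

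For the convexity of $\Phi_\omega(W(\ve{A}))$ I would introduce the operator $S_\omega:=\sum_{j=1}^n f^{(j)}(\omega)A_j$ on $\dom(S_\omega):=\dom(\ve{A})$. For a unit vector $u\in\dom(\ve{A})$, writing $\ve{\beta}_u:=(\ip{A_1u}{u},\dots,\ip{A_nu}{u})\in W(\ve{A})$, one has $\Phi_\omega(\ve{\beta}_u)=g(\omega)+\ip{S_\omega u}{u}$; hence $\Phi_\omega(W(\ve{A}))=g(\omega)+W(S_\omega)$, a translate of the numerical range of $S_\omega$, which is convex because the numerical range of any linear operator is convex.

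The one point deserving care is the invocation of the Toeplitz--Hausdorff theorem for $S_\omega$, which is in general unbounded and non-selfadjoint. This is harmless: the classical proof of convexity of $W(\cdot)$ only uses that the domain is a linear subspace — it reduces to a two-dimensional compression spanned by two vectors of the domain — so it applies verbatim, and density of $\dom(\ve{A})$ guarantees in addition that $W(S_\omega)\neq\emptyset$. The remaining facts, namely that affine maps commute with convex hulls and the bookkeeping that both sides of the claimed equality are the sets of $\omega$ for which the corresponding image of $\Phi_\omega$ contains $0$, are routine. If one prefers to avoid the hull identity, the same argument runs via Carath\'eodory's theorem: express a point $\ve{\alpha}\in{\rm Co}(W(\ve{A}))$ with $t_{\ve{\alpha}}(\omega)=0$ as a convex combination of points of $W(\ve{A})$, so that $0$ becomes a convex combination of points of $g(\omega)+W(S_\omega)$, which is convex.
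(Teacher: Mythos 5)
Your proof is correct and is essentially the paper's own argument in different packaging: both reduce the claim to the Toeplitz--Hausdorff convexity of the numerical range of the single operator $\sum_{j}f^{(j)}(\omega)A_j$, the paper routing this through Carath\'eodory's theorem and the linear pencil $\widetilde{T}(\lambda)=T(\omega)-\lambda$, while you route it through the observation that the affine map $\Phi_\omega$ commutes with convex hulls (and you note the Carath\'eodory variant yourself). If anything, your version is slightly more careful on the domain bookkeeping: working with $S_\omega$ on $\dom(\ve{A})$ makes explicit that the witness vector produced by the two-dimensional compression lies in $\dom(\ve{A})$, so the conclusion is genuinely $\omega\in W_{W(\ve{A})}(T)$ and not merely $\omega\in W(T)$.
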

\begin{proof}
It is clear that $W_{W(\ve{A})}(T)\subset W_{{\rm Co}(W(\ve{A}))}(T)$. Hence, it is enough to show that  $W_{W(\ve{A})}(T)  \subset  W_{{\rm Co}(W(\ve{A}))}(T)$. Assume that $\omega\in W_{{\rm Co}(W(\ve{A}))}(T)$, then it follows from definition that for some $\ve{\alpha}\in {\rm Co}(W(\ve{A}))$,
\[
	g(\omega)+\sum_{j=1}^n \alpha_jf^{(j)}(\omega)=0.
\]
Since  $\ve{\alpha}\in {\rm Co}(W(\ve{A}))$, Caratheody's Theorem yields that
\[
	\ve{\alpha}=\sum_{i=1}^{n+1}k_i\ve{\alpha}^{(i)},\quad \sum_{i=1}^{n+1}k_i=1,
\] 
for some $\ve{\alpha}^{(i)}\in W(\ve{A})$ and $k_i\geq0$. Since $\ve{\alpha}^{(i)}\in W(\ve{A})$, it follows that
\[
	\ve{\alpha}^{(i)}=\left(\ip{A_1u_i}{u_i},\hdots,\ip{A_nu_i}{u_i}\right),
\]
for some unit $u_i\in \dom (\ve{A})$, $i=1,\hdots,n+1$. Hence, 
\[
	g(\omega)+\sum_{j=1}^n \sum_{i=1}^{n+1}k_i \ip{A_ju_i}{u_i}f^{(j)}(\omega)=0,
\]
and consequently, 
\[
	\sum_{i=1}^{n+1}k_i\left(g(\omega)+\sum_{j=1}^n  \ip{A_ju_i}{u_i}f^{(j)}(\omega)\right)=\sum_{i=1}^{n+1}k_i\ip{T(\omega)u_i}{u_i}=0.
\]
Define the linear operator function $\widetilde{T}(\lambda):=T(\omega)-\lambda$. Then $\ip{T(\omega)u_i}{u_i}\in W(\widetilde{T})$ for $i=1,\hdots,n+1$. Since $\widetilde{T}$ is linear, $W(\widetilde{T})$ is convex and thus $0=\sum_{i=1}^{n+1}k_i\ip{T(\omega)u_i}{u_i}\in W(\widetilde{T})$, which yields that $\omega\in W(T)$, the proposition then follows directly.
\end{proof}

\begin{cor}\label{2Pcor}
Let $W_{W(\ve{A})}(T)$ be defined by \eqref{2defwab} and  \eqref{1encg}. Then
\[
	W_{W(\ve{A})}(T)=\bigcap_{\Omega}W_\Omega(T),
\]
where the intersection is taken over all convex $\Omega\supset W(\ve{A})$.
\end{cor}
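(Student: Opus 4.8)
The plan is to reduce the statement entirely to Proposition~\ref{2Psats} together with the evident monotonicity of the map $X\mapsto W_X(T)$. The first step is to record that monotonicity: if $X\subset Y\subset\R^n$, then $W_X(T)\subset W_Y(T)$, which is immediate from the definition \eqref{1encg}, since any $\ve{\alpha}\in X$ witnessing $\omega\in W_X(T)$ also lies in $Y$. In particular, for every convex set $\Omega$ with $\Omega\supset W(\ve{A})$ we obtain $W_{W(\ve{A})}(T)\subset W_\Omega(T)$, and therefore $W_{W(\ve{A})}(T)\subset\bigcap_\Omega W_\Omega(T)$, where the intersection ranges over all such $\Omega$. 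This yields one of the two inclusions with essentially no work.

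For the reverse inclusion I would use that the convex hull ${\rm Co}(W(\ve{A}))$ is itself one of the admissible sets $\Omega$ appearing in the intersection: it is convex and it contains $W(\ve{A})$. Consequently the intersection, being over a family that includes this particular set, satisfies $\bigcap_\Omega W_\Omega(T)\subset W_{{\rm Co}(W(\ve{A}))}(T)$. Now Proposition~\ref{2Psats} gives $W_{{\rm Co}(W(\ve{A}))}(T)=W_{W(\ve{A})}(T)$, so $\bigcap_\Omega W_\Omega(T)\subset W_{W(\ve{A})}(T)$. Combining the two inclusions proves the corollary.

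I do not expect any genuine obstacle here once Proposition~\ref{2Psats} is available; the only point to be careful about is that the family of convex supersets of $W(\ve{A})$ is nonempty and, crucially, contains the convex hull ${\rm Co}(W(\ve{A}))$ as its smallest member, so that the intersection is both well defined and no larger than $W_{{\rm Co}(W(\ve{A}))}(T)$. Equivalently, one may phrase the argument as: intersecting $W_\Omega(T)$ over all convex $\Omega\supset W(\ve{A})$ cannot produce anything strictly smaller than $W_{{\rm Co}(W(\ve{A}))}(T)$ (since ${\rm Co}(W(\ve{A}))$ is the inclusion-minimal such $\Omega$) nor anything strictly larger (by monotonicity), and this set equals $W_{W(\ve{A})}(T)$ by Proposition~\ref{2Psats}.
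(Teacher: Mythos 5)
Your proof is correct and follows essentially the same route as the paper: both rest on Proposition~\ref{2Psats} together with the observation that ${\rm Co}(W(\ve{A}))$ is the minimal convex superset of $W(\ve{A})$. If anything, your version is slightly more careful, since you obtain the inclusion $\bigcap_\Omega W_\Omega(T)\subset W_{{\rm Co}(W(\ve{A}))}(T)$ by noting that ${\rm Co}(W(\ve{A}))$ is itself a member of the family, rather than appealing to a general (and not generally valid) commutation of $W_{(\cdot)}(T)$ with intersections.
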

\begin{proof}
From Proposition \ref{2Psats} it follows that $W_{W(\ve{A})}(T)=W_{{\rm Co}(W(\ve{A}))}(T)$. The result then is a direct consequence of that ${\rm Co}(W(\ve{A}))$ equals the intersection of all convex  $\Omega\supset W(\ve{A})$.
\end{proof}

\section{Estimates of the joint numerical range}

This section is dedicated to obtain a non-trivial enclosure of $W(\ve{A})$. Computing the set $W(\ve{A})$ is usually as  hard as computing the numerical range of $T$.   The trivial result $\Omega\supset W(\ve{A})\Rightarrow W_\Omega(T)\supset W_{W(\ve{A})}(T)$ implies that if a suitable enclosure $\Omega$ of $W(\ve{A})$ can be found  then an enclosure of $W_{W(\ve{A})}(T)$ is obtained as the solutions $\omega$ of $t_{\ve{\alpha}}(\omega)=0$, $\ve{\alpha}\in\Omega$.

In \cite{ATEN}, the trivial $\Omega:= \overline{W(A_1)}\times\hdots\times \overline{W(A_n)}\subset W(\ve{A})$ is used. This $\Omega$ is easy to investigate and it gives the smallest enclosure of $W(\ve{A})$ without further knowledge of the operators. However, especially when working with more than one unbounded operator coefficient, the enclosure might be rather crude.  The idea is therefore to utilize common relations on the operator coefficients  to be able find a smaller enclosure on $W(\ve{A})$ and thus a better enclosure of $W_{W(\ve{A})}(T)$.
 
\begin{ex}\label{3sadex}
Let $A_1$ and $A_2$ be unbounded selfadjoint operators, where $\dom(A_1)\cap\dom(A_2)$ is dense in $\Hs$. Assume that $W(A_1)=[0,\infty)$ and $W(A_2)=[0,\infty)$ and assume that the unbounded operator polynomial
\[
	 \widehat{T}(\omega):=\omega^2-I-2\omega A_1+2A_2,\quad \dom(\widehat{T}(\omega)):=\dom(A_1)\cap\dom(A_2),
\]
is closable for $\omega\in\C$ and let $T$ denote the closure.
Without further knowledge of $A_1$ and $A_2$, the set $\Omega:=[0,\infty)\times[0,\infty)$ is the smallest enclosure of $W(A_1,A_2)$. It then follows that
\[
	W_\Omega(T)=\left\{\alpha_1\pm\sqrt{\alpha_1^2+1-2\alpha_2}:(\alpha_1,\alpha_2)\in\Omega\right\}=[-1,0]\cup\{\omega\in \C:{\rm Re\  \omega}\geq0\}.
\]
Hence, the enclosure $W_\Omega(T)$ includes the half-plane consisting  of $\omega$ with non-negative real part. However, a better enclosure might be possible to obtain if we have additional knowledge of the correspondence of the operators $A_1$ and $A_2$. Assume that $\ip{A_2u}{u}\leq\ip{A_1u}{u}$ for all $u\in \dom(A_1)\cap\dom(A_2)$ this yields that
\[
	W(A_1,A_2)\subset\Omega':=\{(\alpha_1,\alpha_2)\in [0,\infty)\times[0,\infty): \alpha_2\leq\alpha_1\}.
\]
It then follows that
\[
	W_{\Omega'}(T)=\left\{\alpha_1\pm\sqrt{(\alpha_1-1)^2+2(\alpha_1 -\alpha_2)}:(\alpha_1,\alpha_2)\in\Omega'\right\}=[-1,\infty), 
\]
and since the functions in $T$ are holomorphic, Lemma \ref{2lemwta} yields that $W(T)\subset [-1,\infty)$, which is a significant improvement from $W_\Omega(T)$. 
\end{ex}
Example \ref{3sadex} shows the benefits of using a non-trivial $\Omega$. The condition $\ip{A_1u}{u}\geq\ip{A_2u}{u}$ is useful even if one or both of the operators are bounded. Clearly more complicated  conditions than $\ip{A_1u}{u}\geq\ip{A_2u}{u}$ can be studied.

\begin{lem}\label{4boundlem}
Let $A_j$ be  a selfadjoint operator in $\Hs$, $j=1,\hdots,n$ such that $\dom (\ve{A})$ is dense in $\Hs$. Assume that for some $k\in \{1,\hdots,n\}$ and $M\subset\{1,\hdots,n\}\setminus\{k\}$, there exist functions $y_j:W(A_j)\rightarrow \R $ for $j\in M$, such that one of the following inequalities
 \begin{equation}\label{4bound}
\begin{array}{r l}
	{(\rm i)} &\ip{A_ku}{u}\leq \sum_{j\in M}y_j(\ip{A_ju}{u}),\\
	{(\rm ii)} &\ip{A_ku}{u}\geq \sum_{j\in M}y_j(\ip{A_ju}{u}),
\end{array}
\end{equation}
holds for all $u\in \dom(\ve{A})$.  Then the corresponding  
\begin{equation*}
\begin{array}{r l}
	{(\rm i)} &W(\ve{A})\subset \{\ve{\alpha}\in {W(A_1)}\times\hdots\times {W(A_n)}:\alpha_k\leq \sum_{j\in M}y_j(\alpha_j)\},\\
	{(\rm ii)} &W(\ve{A})\subset \{\ve{\alpha}\in {W(A_1)}\times\hdots\times {W(A_n)}:\alpha_k\geq \sum_{j\in M}y_j(\alpha_j)\},
\end{array}
\end{equation*}
holds.
\end{lem}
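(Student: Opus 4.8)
The plan is to observe that Lemma \ref{4boundlem} is essentially a tautology once the definitions are unwound, so the proof is short and direct. Fix a unit vector $u\in\dom(\ve{A})$ and consider the point $\ve{\alpha}:=(\ip{A_1u}{u},\hdots,\ip{A_nu}{u})\in W(\ve{A})$. By the very definition \eqref{2defwab} of the joint numerical range, $\alpha_j=\ip{A_ju}{u}\in W(A_j)$ for every $j$, so $\ve{\alpha}\in W(A_1)\times\hdots\times W(A_n)$. It remains only to check the extra coordinate inequality. In case (i), apply hypothesis \eqref{4bound}(i) to this particular $u$: it gives exactly $\alpha_k=\ip{A_ku}{u}\le\sum_{j\in M}y_j(\ip{A_ju}{u})=\sum_{j\in M}y_j(\alpha_j)$, which is the defining condition of the right-hand set. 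Case (ii) is identical with the inequality reversed. Since $u$ was an arbitrary unit vector in $\dom(\ve{A})$ and every element of $W(\ve{A})$ arises this way, the claimed inclusions follow.

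The only point worth spelling out is that the functions $y_j$ are a priori only defined on $W(A_j)$, so one should note that $\ip{A_ju}{u}$ does indeed lie in the domain $W(A_j)$ of $y_j$ — but this is precisely the observation above that $\alpha_j\in W(A_j)$, so both sides of \eqref{4bound} are well-defined for every admissible $u$, and correspondingly $\sum_{j\in M}y_j(\alpha_j)$ is well-defined for every $\ve{\alpha}\in W(\ve{A})$.

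There is no real obstacle here: the lemma merely transcribes an inequality valid for all unit vectors into the language of the joint numerical range, and the proof is a one-line verification. (If anything, the mild subtlety is just the domain bookkeeping for $y_j$ mentioned above, which is immediate.) I would therefore write the proof in three or four sentences, treating case (ii) as "analogous" rather than repeating it.

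\begin{proof}
Let $u\in\dom(\ve{A})$ with $\|u\|=1$ and set $\alpha_j:=\ip{A_ju}{u}$ for $j=1,\hdots,n$. By the definition \eqref{2defwab} of $W(\ve{A})$ we have $\ve{\alpha}=(\alpha_1,\hdots,\alpha_n)\in W(\ve{A})$ and, in particular, $\alpha_j\in W(A_j)$ for each $j$; hence $y_j(\alpha_j)$ is well-defined for $j\in M$. In case (i), applying the hypothesis \eqref{4bound}(i) to this vector $u$ gives
\[
	\alpha_k=\ip{A_ku}{u}\leq\sum_{j\in M}y_j(\ip{A_ju}{u})=\sum_{j\in M}y_j(\alpha_j),
\]
so $\ve{\alpha}$ belongs to $\{\ve{\alpha}\in W(A_1)\times\hdots\times W(A_n):\alpha_k\leq\sum_{j\in M}y_j(\alpha_j)\}$. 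Since every element of $W(\ve{A})$ is of this form for some unit vector $u\in\dom(\ve{A})$, the inclusion (i) follows. Case (ii) is obtained in exactly the same way by reversing the inequality throughout.
\end{proof}
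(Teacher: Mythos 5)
Your proof is correct and is exactly the argument the paper intends: the paper's own proof simply says the result "follows directly" from \eqref{4bound}, and your write-up is the natural unwinding of that one-line observation. Nothing is missing.
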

\begin{proof}
From \eqref{4bound} the result follows directly. 
\end{proof}

If \eqref{4bound} {(\rm i)} holds and $\{\ve{\alpha}^{(i)}\}_{i=1}^{\infty}\in W(\ve{A})$  it follows that if $\alpha_j^{(i)}$ are bounded as $i\rightarrow \infty$ for $j\in M$, then $\alpha_k^{(i)}$ does not approach $\infty$. In particular if \eqref{4bound} {(\rm i)} holds for $k\in\{1,\hdots, n\}\setminus\{j\}$ with $M=\{j\}$ for some functions $y_j^{(k)}$, $k\in\{1,\hdots,n\}\setminus\{j\}$, then for $\ve{\alpha}\in W(\ve{A})$, $\alpha_k$ is bounded by a constant depending on $\alpha_j$ for $k\in\{1,\hdots,n\}\setminus\{j\}$. In this case, the problem therefore can be studied a bit similar  to the case when there is only one unbounded operator. This stresses the increased importance of relations of the type \eqref{4bound} when there are multiple unbounded operators.

In the following subsections two standard types of relations between the operators in $\ve{A}$ are presented and ways to find  $y_j$ satisfying  \eqref{4bound} in these cases are given. From Lemma \ref{4boundlem} we then obtain non-trivial enclosures of $W(\ve{A})$.

\subsection{Functions of selfadjoint operators}
Let $A\in\Ls(\Hs)$ be a selfadjoint operator. From \cite{MR1501850,MR3122346}, it follows that there exists a sequence of pairwise orthogonal Hilbert spaces $\Hs_i\subset \dom (A)$ with the inner products $\ip{\cdot}{\cdot}_{\Hs_i}=\ip{V_i\cdot}{V_i\cdot}_{\Hs}$, $i=1,2\hdots$, that satisfies
\[
\sum_{i=1}^\infty V_iV_i^* =I_{\Hs},\quad
V_i^*u=\bigg\{\begin{array}{l l}
u,&u\in\Hs_i\\
0,&u\in\Hs_i^\bot
\end{array},
\]
and 
\begin{equation}\label{3sepa}
	A=\sum_{i=1}^\infty V_iA_i V_i^*,\quad  \dom (A)=\left\{u\in\Hs : \sum_{i=1}^\infty \|A_i V_i^*u\|_{\Hs_i}^2<\infty\right\},
\end{equation}
for some selfadjoint operators $A_i\in \Bs(\Hs_i)$.  The sequence of Hilbert spaces $\{\Hs_i\}_{i=1}^{\infty}$ is called a \emph{reduction} of $A$. Since $A_i$ is a bounded selfadjoint operator the spectral theorem \cite[Corollary of Theorem VII.3]{MR0493419}, states that there is some finite measure space $(X_i,\mu_i)$, unitary operator $U_i:L^2(X_i,\mu_i)\rightarrow \Hs_i$  and a multiplication operator $M_{h_i}\in \Bs( L^2(X_i,\mu_i))$  such that
\[
	A_i=U_iM_{h_i}U_i^*.
\]
Here $M_{h_i}$ denotes multiplication with the bounded function $h_i\in L^\infty(X_i,\mu_i)$ and
\begin{equation}\label{3essran}
	\sigma(A_i)=\{x\in\R:\mu(h_i^{-1}(x-\epsilon,x+\epsilon))>0, \text{ for all }\epsilon>0\}.
\end{equation}
The right hand side of \eqref{3essran} is called the essential range of $h_i$. Let $y:\sigma(A)\rightarrow\R$ denote a Borel function bounded on bounded domains. From \eqref{3essran} it follows that the set of $x\in X_i$, where $y(h_i(x))$ is not defined, is of measure $0$. Hence, $M_{y(h_i)}$ is well-defined and we can thus define 
\begin{equation}\label{3yava}
	y(A)=\sum_{i=1}^\infty V_iU_iM_{y(h_i)}U_i^* V_i^*, \quad\dom(y(A))=\left\{u\in\Hs : \sum_{i=1}^\infty \|U_iM_{y(h_i)}U_i^* V_i^*u\|_{\Hs_i}^2<\infty\right\}.
\end{equation}
The operator $y(A)$ is selfadjoint and
\cite[Proposition 3]{MR3122346} implies that $y(A)$ in this definition is independent of the choice of the sequence of Hilbert spaces $\{\Hs_i\}_{i=1}^\infty$.

\begin{prop}\label{3lemjofun}
Let $A$ be a selfadjoint operator and let $y:\sigma(A)\rightarrow\R$ denote a Borel function bounded on bounded domains and define $y(A)$ as in \eqref{3yava}. Then, 
\[
	W(A,y(A))\subset{\rm Co}(\{(\alpha,y(\alpha)):\alpha\in \sigma(A)\}).
\]
\end{prop}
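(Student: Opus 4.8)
The plan is to realize every point of $W(A,y(A))$ as an average over the spectrum of $A$, using the spectral decomposition recorded in \eqref{3yava}. Fix a unit vector $u\in\dom(A)\cap\dom(y(A))=\dom(\ve{A})$. Using the reduction $\{\Hs_i\}$ and the unitaries $U_i$, transport $u$ to the function $\phi:=\bigoplus_i U_i^*V_i^*u$ living in $\bigoplus_i L^2(X_i,\mu_i)$, which is again a unit vector. The key observation is that, in this representation, $A$ acts as multiplication by the function $h$ whose restriction to $X_i$ is $h_i$, and $y(A)$ acts as multiplication by $y\circ h$. Hence
\[
	\ip{Au}{u}=\int_X h\,|\phi|^2\,d\mu,\qquad \ip{y(A)u}{u}=\int_X (y\circ h)\,|\phi|^2\,d\mu,
\]
where $(X,\mu):=\bigsqcup_i(X_i,\mu_i)$ and $|\phi|^2\,d\mu$ is a probability measure on $X$. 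Pushing this probability measure forward along $x\mapsto h(x)$ produces a Borel probability measure $\nu$ on $\sigma(A)$ (by \eqref{3essran}, $h$ takes values $\mu$-a.e.\ in $\sigma(A)$), and the change of variables gives $\ip{Au}{u}=\int_{\sigma(A)}\alpha\,d\nu(\alpha)$ and $\ip{y(A)u}{u}=\int_{\sigma(A)} y(\alpha)\,d\nu(\alpha)$. Thus $(\ip{Au}{u},\ip{y(A)u}{u})$ is the barycenter of the probability measure $\nu$ on the curve $\{(\alpha,y(\alpha)):\alpha\in\sigma(A)\}$, and any such barycenter lies in the closed convex hull of the curve. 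Taking the union over all unit $u$ yields the claimed inclusion.

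The steps, in order, are: (1) record that $\dom(A)\cap\dom(y(A))$ is the common domain and restrict to a unit vector $u$ there; (2) write down the simultaneous diagonalization of $A$ and $y(A)$ as multiplication operators on $\bigoplus_i L^2(X_i,\mu_i)$ coming from \eqref{3yava}, noting $y(A)$ corresponds to $M_{y\circ h}$; (3) form the probability measure $d\rho:=|\phi|^2\,d\mu$ and express both inner products as integrals against $\rho$; (4) push $\rho$ forward under $h$ to get $\nu$ on $\sigma(A)$ and apply the change-of-variables formula; (5) invoke the fact that the barycenter of a probability measure supported on a set $S$ (here $S=\{(\alpha,y(\alpha)):\alpha\in\sigma(A)\}$, which may be unbounded, so one works with $\overline{{\rm Co}}(S)$ and uses that $y$ is bounded on bounded domains to control integrability on the relevant range) lies in its closed convex hull.

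The main obstacle I anticipate is the barycenter step when $\sigma(A)$ is unbounded: one must be sure that $\int \alpha\,d\nu$ and $\int y(\alpha)\,d\nu$ converge and that the resulting point genuinely lies in the \emph{convex hull} (as stated) rather than only in its closure. Convergence of $\int\alpha\,d\nu$ is immediate from $u\in\dom(A)$ (indeed $\int\alpha^2\,d\nu=\|Au\|^2<\infty$), and convergence of $\int y(\alpha)\,d\nu$ from $u\in\dom(y(A))$. For membership in ${\rm Co}$ rather than $\overline{{\rm Co}}$, one can either cite that for a measure with finite first moment on a closed subset of Euclidean space the barycenter lies in the convex hull, or reduce to it by a truncation/approximation argument; if this is delicate the statement should perhaps read $\overline{{\rm Co}}$, but since the paper later only uses such enclosures up to closure (cf.\ Lemma \ref{2lemwta}), the distinction is harmless. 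A secondary technical point is justifying that $A$ and $y(A)$ are simultaneously multiplication operators on the \emph{same} measure space; this is exactly the content of the construction \eqref{3yava} together with \cite[Proposition 3]{MR3122346}, so it can be quoted rather than reproved.
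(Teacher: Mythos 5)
Your proposal is correct and follows essentially the same route as the paper's proof: both pass to the reduction $\{\Hs_i\}$ and the spectral representation \eqref{3yava}, write $(\ip{Au}{u},\ip{y(A)u}{u})$ as the integral of $(h,y\circ h)$ against the probability density $|U_i^*V_i^*u|^2\,d\mu_i$ summed over $i$, and conclude membership in the convex hull of $\{(\alpha,y(\alpha)):\alpha\in\sigma(A)\}$. Your pushforward/barycenter phrasing and your flagging of the ${\rm Co}$ versus $\overline{{\rm Co}}$ issue for unbounded $\sigma(A)$ are if anything slightly more careful than the paper, which simply asserts the conclusion ``from the definition of the convex hull.''
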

\begin{proof}
Take $(\alpha,\beta)\in W(A,y(A))$ then there is some unit vector $u$ in  $\dom (A)\cap \dom (y(A))$ such that
\[
	(\alpha,\beta)=(\ip{Au}{u},\ip{y(A)u}{u})=\sum_{i=1}^\infty\left(\ip{M_{h_i}z_i}{z_i}_{L^2(X_i,\mu_i)},\ip{M_{y(h_i)}z_i}{z_i}_{L^2(X_i,\mu_i)}\right),
\]
where $z_i:=U_i^*V_i^*u\in L^2(X_i,\mu_i)$. Since the operators $M_{h_i}$ and $M_{y({h_i})}$ are multiplication by $h_i$ respectively $y(h_i)$ in $L^2(X_i,\mu_i)$ this can be written as 
\begin{equation}\label{3genconv}
	(\alpha,\beta)=\sum_{i=1}^\infty\int_{X_i}\left(h_i(x),y(h_i(x))\right)|z_i(x)|^2d\mu_i(x),
\end{equation}
where the integral is defined component wise. Furthermore, since $U_i$ is unitary and $\Hs_i$ are pairwise orthogonal
\[	
	\sum_{i=1}^\infty\int_{X_i}|z_i(x)|^2d\mu_i(x)=\sum_{i=1}^\infty\|z_i\|^2_{L^2(X_i,\mu_i)}=\sum_{i=1}^\infty\|V_i^*u\|_{\Hs_i}^2=\|u\|_{\Hs}^2=1.
\]
Since $h_i(x)\in\sigma(A_i)\subset \sigma(A)$, for all $x\in X_i$ apart from a set of measure $0$, it follows from \eqref{3genconv} and the definition of the convex hull that $(\alpha,\beta)\in {\rm Co}(\{(\alpha,y(\alpha)):\alpha\in \sigma(A)\})$. 
\end{proof}

\begin{cor}\label{3corjofun}
Let $A$ be a selfadjoint operator, with $\overline{W(A)}=[a_0,a_1]$ if $A$ is bounded and let $y:\overline{W(A)}\rightarrow\R$ be a convex Borel function bounded on bounded sets. Furthermore, define $y(A)$ as in \eqref{3yava}. Then
\begin{equation}\label{4conimp}
	W(A,y(A))\subset \{(\alpha,\beta): \alpha \in \overline{W(A)},\ y(\alpha)\leq \beta\leq\conv{y}(\alpha) \},
\end{equation}
where
\[
	\conv{y}(\alpha):=\left\{\begin{array}{c l}\dfrac{y(a_1)-y(a_0)}{a_1-a_0}\alpha+\dfrac{y(a_0)a_1-y(a_1)a_0}{a_1-a_0}& \text{if } A \text{ is bounded}\\
	\infty& \text{if } A \text{ is unbounded}\\
	\end{array}
	\right..
\]
\end{cor}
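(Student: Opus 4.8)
The plan is to deduce the corollary directly from Proposition~\ref{3lemjofun} together with elementary convex geometry of the graph of a convex function. By Proposition~\ref{3lemjofun} we have $W(A,y(A))\subset{\rm Co}(G)$ with $G:=\{(\alpha,y(\alpha)):\alpha\in\sigma(A)\}$, so it suffices to show that ${\rm Co}(G)$ is contained in the target region $\mathcal{R}:=\{(\alpha,\beta):\alpha\in\overline{W(A)},\ y(\alpha)\le\beta\le\conv{y}(\alpha)\}$. First I would record the relevant facts about the interval $\overline{W(A)}$: since $A$ is selfadjoint, $\overline{W(A)}$ is the smallest closed interval containing $\sigma(A)$, hence $\sigma(A)\subset\overline{W(A)}$ and ${\rm Co}(\sigma(A))\subset\overline{W(A)}$, while in the bounded case $\overline{W(A)}=[a_0,a_1]$ with $a_0=\inf\sigma(A)$ and $a_1=\sup\sigma(A)$ (one may assume $a_0<a_1$, the degenerate case $A=a_0I$ being trivial). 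In particular every point of ${\rm Co}(G)$ has first coordinate in $\overline{W(A)}$, so only the two inequalities on $\beta$ remain.

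For the lower bound I would use convexity of $y$ on $\overline{W(A)}$: writing a point of ${\rm Co}(G)$ as a finite convex combination $(\alpha,\beta)=\sum_i k_i(\alpha_i,y(\alpha_i))$ with $\alpha_i\in\sigma(A)$, $k_i\ge0$, $\sum_i k_i=1$, one has $\alpha=\sum_i k_i\alpha_i\in\overline{W(A)}$ and Jensen's inequality gives $\beta=\sum_i k_i y(\alpha_i)\ge y\bigl(\sum_i k_i\alpha_i\bigr)=y(\alpha)$. This already disposes of the unbounded case, where $\conv{y}(\alpha)=\infty$ and the upper bound is vacuous.

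For the bounded case I would note that $\conv{y}$ is precisely the affine function agreeing with $y$ at the endpoints $a_0,a_1$, so convexity of $y$ yields that the chord lies above the graph, i.e.\ $y(\alpha)\le\conv{y}(\alpha)$ for all $\alpha\in[a_0,a_1]$. Then, with the same convex combination and using that $\conv{y}$ is affine, $\beta=\sum_i k_i y(\alpha_i)\le\sum_i k_i\conv{y}(\alpha_i)=\conv{y}\bigl(\sum_i k_i\alpha_i\bigr)=\conv{y}(\alpha)$. Combined with the lower bound this gives ${\rm Co}(G)\subset\mathcal{R}$, and the corollary follows.

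The main (and essentially only) technical point is the compatibility of Proposition~\ref{3lemjofun} with finite convex combinations: its proof produces $(\alpha,\beta)$ as a barycenter of $G$ with respect to a probability measure, namely the integral representation \eqref{3genconv}, rather than as a finite convex combination. The cleanest remedy is to apply Jensen's inequality and the affine estimate directly to that integral representation: the bound $\beta\ge y(\alpha)$ is exactly Jensen for the convex function $y$ and the probability measure of total mass $1$ appearing there, and in the bounded case $\beta=\sum_i\int_{X_i}y(h_i(x))|z_i(x)|^2\,d\mu_i(x)\le\sum_i\int_{X_i}\conv{y}(h_i(x))|z_i(x)|^2\,d\mu_i(x)=\conv{y}(\alpha)$ since $\conv{y}\ge y$ on $[a_0,a_1]$ and $\conv{y}$ is affine. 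With that reading no further obstacle is expected.
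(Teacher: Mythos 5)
Your proof is correct and follows essentially the same route as the paper: both reduce the corollary to Proposition~\ref{3lemjofun} and then verify that the convex hull of the graph of $y$ lies between the graph and the chord (resp.\ is unbounded above in the unbounded case) using convexity of $y$. You merely spell out the Jensen-type inequalities and the compatibility with the integral representation that the paper's one-line argument leaves implicit.
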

\begin{proof}
Since $y$ is convex on $\overline{W(A)}$ and for bounded $A$ the function $\conv{y}$ is the straight line between $(a_0,y(a_0))$ and $(a_1,y(a_1))$, it follows that ${\rm Co}(\{(\alpha,y(\alpha)):\alpha\in \overline{W(A)}\})$ is a subset of the proposed enclosure of $W(A,y(A))$. The result then follows directly from Proposition~\ref{3lemjofun} and that ${\rm Co}(\{(\alpha,y(\alpha)):\alpha\in \sigma(A)\})\subset {\rm Co}(\{(\alpha,y(\alpha)):\alpha\in \overline{W(A)}\})$.
\end{proof}

\begin{rem}
Obviously, for concave functions an analog result holds. 
\end{rem}

\begin{ex}\label{3exjopol}
Let $A$ be a  bounded selfadjoint operator, $\overline{W(A)}=[a_0,a_1]$. Assume that either $n\in\N$ is even or $\alpha_0\geq0$. Then  $y(\alpha):=\alpha^n$   is convex and thus from Corollary \ref{3corjofun} we obtain
\begin{equation}\label{4evenimp}
W(A,A^n)\subset\left\{(\alpha,\beta):\alpha\in \overline{W(A)}\ ,\ \alpha^n\leq\beta\leq \alpha\sum_{i=0}^{n-1}a_0^{i}a_1^{n-i-1}-\sum_{i=1}^{n-1}a_0^{i}a_1^{n-i}\right\}.
\end{equation}
 Note that the upper bound on $\beta$ is the straight line between $(\alpha_0,\alpha_0^n)$ and $(\alpha_1,\alpha_1^n)$. 
If $A$ would have been unbounded \eqref{4evenimp} gives only a lower bound on $\beta$. 
In principle this also applies for odd $n$ and indefinite $A$ but then ${\rm Co}(\{(\alpha,y(\alpha)):\alpha\in \overline{W(A)}\})$ is more complicated to describe since $y$ is neither convex nor concave.
\end{ex}

Proposition \ref{3lemjofun} also enables us to find non-trivial enclosures of $\overline{W(\ve{A})}$ in more general cases than $\ve{A}=(A,y(A))$.

\begin{defn}\label{convdef}
Let $y:X\rightarrow\R$ denote a function on a connected set $X\subset\R$. Define the functions $\conv{y}:X\rightarrow \R\cup\{\infty\}$ and $\convm{y}:X\rightarrow \R\cup\{-\infty\}$ as
\[	
\begin{array}{l}
	\conv{y}(x):=\sup \{z:(x,z)\in {\rm Co}(\{x,y(x): x\in \overline{X}\}),\\
	\convm{y}(x):=\inf \{z:(x,z)\in {\rm Co}(\{x,y(x): x\in \overline{X}\}).
	\end{array}
\]
These functions are visualized in Figure \ref{fig:convfunc}.
\begin{center}
	\begin{figure}
	\includegraphics[width=12.5cm]{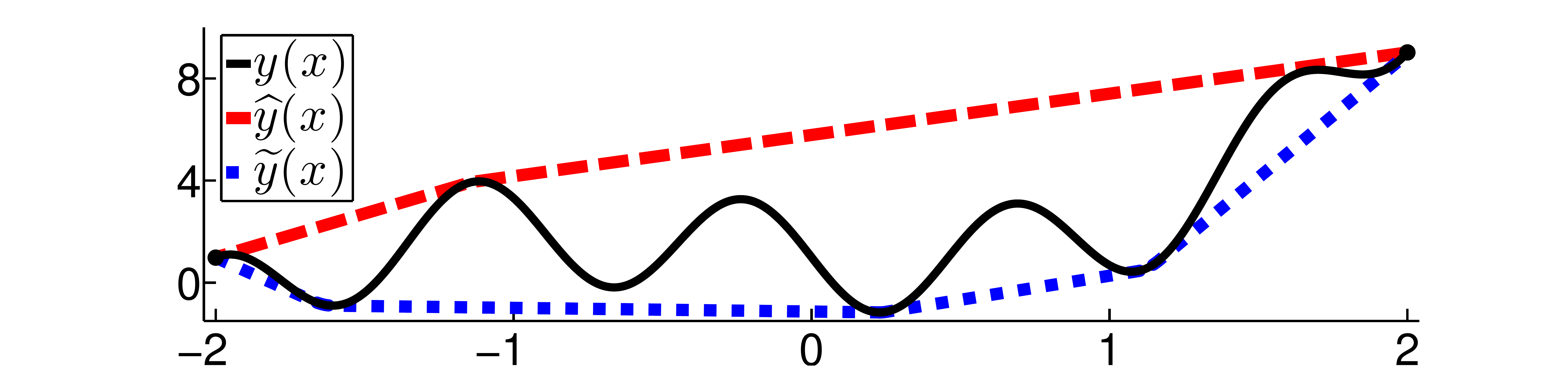}
	\caption{ Visualization of $\conv{y}(x)$ and $\convm{y}(x)$ in Definition \ref{convdef} for $y(x)=x^3+x^2-x+1-2\sin 5x$.}\label{fig:convfunc}
	\end{figure}
\end{center}
\end{defn}

\begin{prop}\label{2transprop}
Let $A_j$ be selfadjoint operators in $\Hs$ for $j=1,\hdots,n$ and assume that $\dom (\ve{A})$ is dense in $\Hs$. Let $k\in \{1,\hdots,n\}$, $M\subset\{1,\hdots,n\}\setminus\{k\}$ and let $y_j:\overline{W(A_j)}\rightarrow \R $, $j\in M$ denote Borel functions bounded on bounded sets. Let $y_j(A_j)$ be defined as in \eqref{3yava} and assume that $\dom(\ve{A})\subset \dom(y_j(A_j))$ for $j\in M$ and  one of the following inequalities holds
 \[ 
\begin{array}{r l}
	{(\rm i)} &\ip{A_ku}{u}\leq \sum_{j\in M}\ip{y_j(A_j)u}{u},\\
	{(\rm ii)} &\ip{A_ku}{u}\geq \sum_{j\in M}\ip{y_j(A_j)u}{u},
\end{array}
\]
for all $u\in \dom(\ve{A})$. Then the corresponding property
\begin{equation}\label{2transeq}
\begin{array}{r l}
	{(\rm i)} &W(\ve{A})\subset \{\ve{\alpha}\in {W(A_1)}\times\hdots\times {W(A_n)}:\alpha_k\leq \sum_{j\in M}\conv{y_j}(\alpha_j)\},\\
	{(\rm ii)} &W(\ve{A})\subset \{\ve{\alpha}\in {W(A_1)}\times\hdots\times {W(A_n)}:\alpha_k\geq \sum_{j\in M}\convm{y_j}(\alpha_j)\},
\end{array}
\end{equation}
holds where $\conv{y_j}$ and $\convm{y_j}$ are given by Definition \ref{convdef} with $X=W(A_j)$.
\end{prop}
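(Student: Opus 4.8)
The plan is to reduce the multi-operator statement to the two-operator result already available in Proposition~\ref{3lemjofun} (equivalently Corollary~\ref{3corjofun}), using the fact that the inequalities in the hypothesis couple $A_k$ only with the individual operators $A_j$, $j\in M$, through the \emph{separate} functions $y_j(A_j)$. First I would fix a point $\ve{\alpha}\in W(\ve{A})$, so $\ve{\alpha}=(\ip{A_1u}{u},\dots,\ip{A_nu}{u})$ for some unit $u\in\dom(\ve{A})$. Each coordinate $\alpha_j$ automatically lies in $W(A_j)$, which gives the membership in ${W(A_1)}\times\dots\times{W(A_n)}$ for free, so the whole content is the inequality on $\alpha_k$.

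For case (i): apply Proposition~\ref{3lemjofun} to the pair $(A_j,y_j(A_j))$ for each $j\in M$. Since $u\in\dom(\ve{A})\subset\dom(y_j(A_j))$ by hypothesis, the point $(\ip{A_ju}{u},\ip{y_j(A_j)u}{u})=(\alpha_j,\beta_j)$, where I set $\beta_j:=\ip{y_j(A_j)u}{u}$, lies in ${\rm Co}(\{(\alpha,y_j(\alpha)):\alpha\in\sigma(A_j)\})\subset{\rm Co}(\{(\alpha,y_j(\alpha)):\alpha\in\overline{W(A_j)}\})$. By Definition~\ref{convdef} with $X=W(A_j)$, any point of this convex hull with first coordinate $\alpha_j$ has second coordinate at most $\conv{y_j}(\alpha_j)$; hence $\beta_j\le\conv{y_j}(\alpha_j)$. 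Now sum over $j\in M$ and invoke the hypothesis $\ip{A_ku}{u}\le\sum_{j\in M}\ip{y_j(A_j)u}{u}$, i.e. $\alpha_k\le\sum_{j\in M}\beta_j\le\sum_{j\in M}\conv{y_j}(\alpha_j)$, which is exactly \eqref{2transeq}(i). Case (ii) is symmetric: replace $\conv{y_j}$ by $\convm{y_j}$, use the lower bound $\beta_j\ge\convm{y_j}(\alpha_j)$ coming from the same convex-hull description, and reverse the inequality from the hypothesis.

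A small technical point to address explicitly is that $\sigma(A_j)\subset\overline{W(A_j)}$ (so that restricting $y_j$ to $\overline{W(A_j)}$ makes sense and the inclusion of convex hulls used above is valid), and that $\conv{y_j},\convm{y_j}$ in Definition~\ref{convdef} are evaluated with the connected set $X=W(A_j)$ — one should note $W(A_j)$ is an interval since $A_j$ is selfadjoint, so the definition applies. I would also remark that when $A_j$ is unbounded, $\conv{y_j}(\alpha_j)$ may equal $+\infty$ (resp. $\convm{y_j}=-\infty$), in which case the corresponding inequality is vacuous for that $j$, consistent with the statement.

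The argument is essentially a packaging of Proposition~\ref{3lemjofun}, so there is no serious obstacle; the only thing requiring care is keeping the roles of $\alpha_j$ (a numerical-range value of $A_j$) and $\beta_j$ (the corresponding value of $y_j(A_j)$ at the \emph{same} vector $u$) straight, and making sure the convex-hull bound is applied coordinatewise for each $j\in M$ before summing. I would keep the proof to a few lines: reduce to a fixed $u$, cite Proposition~\ref{3lemjofun} for each pair, read off the bound on $\beta_j$ from Definition~\ref{convdef}, sum, and combine with the hypothesis.
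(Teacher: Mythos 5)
Your proposal is correct and follows essentially the same route as the paper's proof: fix a unit vector $u$ realizing $\ve{\alpha}$, apply Proposition~\ref{3lemjofun} to each pair $(A_j,y_j(A_j))$ to conclude $\ip{y_j(A_j)u}{u}\leq \conv{y_j}(\alpha_j)$ (resp.\ $\geq \convm{y_j}(\alpha_j)$), then sum over $j\in M$ and combine with the hypothesis. The additional remarks on $\sigma(A_j)\subset\overline{W(A_j)}$ and the possibly infinite values of $\conv{y_j}$ are sensible but not needed beyond what the paper already records.
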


\begin{proof}
{(\rm i)} Assume $\ve{\alpha}\in W(\ve{A})$, then there is  some unit $u\in \dom(\ve{A})$ such that $\alpha_j=\ip{A_ju}{u}$ for $j\in M$ and
\[
	 \alpha_k=\ip{A_ku}{u}\leq \sum_{j\in M}\ip{y_j(A_j)u}{u}.
\]
From Proposition \ref{3lemjofun} it follows that for $j\in M$:
\[
	(\alpha_j,\ip{y_j(A_j)u}{u})\in  {\rm Co}(\{(\alpha,y_j(\alpha)):\alpha\in \sigma(A_j)\})\subset{\rm Co}(\{(\alpha,y_j(\alpha)):\alpha\in \overline{W(A_j)}\}),
\] 
which means (by definition) that $\ip{y_j(A_j)u}{u}\leq \conv{y_j}(\alpha_j)$. Then it clearly follows that $\alpha_k\leq \sum_{j\in M}\conv{y_j}(\alpha_j)$, which proves {(\rm i)}. The proof for {(\rm ii)} is analogous.
\end{proof}

Consider the case $\ve{A}=(A_1,A_2)$ and assume that
\[
	\ip{z(A_1)u}{u}\leq\ip{A_2u}{u}\leq \min( \ip{y(A_1)u}{u}, \ip{y'(A_1)u}{u}),\quad u\in\dom(\ve{A})
\]
 for some Borel functions $y,y',z:\overline{W(A_1)}\rightarrow\R$ that are bounded on bounded domains. Then by applying Proposition \ref{2transprop} three times it follows that
\[
	W(\ve{A})\subset \{\ve{\alpha}\in {W(A_1)}\times {W(A_2)}:\convm{z}(\alpha_1)\leq\alpha_2\leq \min (\conv{y}(\alpha_1), \conv{y}'(\alpha_1))\}.
\]
The advantage of being able to use the result in Proposition \ref{2transprop} multiple times is more thoroughly shown in Example \ref{bound:ex}. 

\begin{ex}\label{bound:ex}
Consider the  $\C^{4\times4}$-matrices 
\begin{equation}\label{3exmat}
	A_1:=\begin{bmatrix}
	1\\&2\\& & 2\\& & & 4
	\end{bmatrix},\quad A_2:=\begin{bmatrix}
	& & & 1\\& &1\\& 1\\1
	\end{bmatrix}.
\end{equation}
It is easy to see that $W(A_1,A_2)$ is symmetric with respect to the line $\R\times\{0\}$, thus a non-trivial upper bound also yields a lower bound. By straight forward computations it follows that for unit vectors $u$
\[
	\ip{A_2u}{u}\leq\ip{2^{-s}A_1^su}{u},\quad s\in\R.
\]
Since $\alpha^s$ is a convex function in $\alpha$ for $s\in \R\setminus (0,1)$, it follows from Proposition \ref{2transprop},  (see Corollary \ref{3corjofun}) that
\[
	\ip{A_2u}{u}\leq \frac{2^{s}-2^{-s}}{3}\ip{A_1u}{u}-\frac{2^{s}-2^{-s+2}}{3},\quad s\in \R\setminus (0,1).
\]
In Figure \ref{2fig:jointnum}.(a), the upper (and lower) bound for different $s\in \R\setminus (0,1)$ is visualized.
Since the upper bound holds for $s\in \R\setminus (0,1)$, for each $\ip{A_1u}{u}$ we choose $s$ such that the upper bound is minimized.
This yields the bound $|\ip{A_2u}{u}|\leq z(\ip{A_1u}{u})$ where
\begin{equation}\label{4exbo}
	z(\alpha):=\left\{ \begin{array}{l l}
	\dfrac{2}{3}\sqrt{(4-\alpha)(\alpha-1)}, & \alpha\notin \left(\dfrac{8}{5},\dfrac{5}{2}
\right)\\
	\dfrac{\alpha}{2}&\alpha\in \left(\dfrac{8}{5},2\right]\\
	1& \alpha\in \left(2,\dfrac{5}{2}\right)
\end{array}\right ..
\end{equation}
  In Figure \ref{2fig:jointnum}.(b), the enclosure of $W(A_1,A_2)$ obtained by the bounds on $\ip{A_2u}{u}$ given in \eqref{4exbo} is visualized. It can also be seen that the enclosure actually equals $W(A_1,A_2)$. This shows the power of using multiple conditions when finding the enclosure of $W(A_1,A_2)$. It should here be noted that 
   \[
 	z(A_1)=\begin{bmatrix}
	0\\
	& 1\\
	& & 1\\
	& & & 0
	\end{bmatrix}
 \]
 and thus $\ip{z(A_1)u}{u}<\ip{A_2u}{u}$ for $u=[1,0,0,1]^T$. Hence, the upper bound obtained by taking the intersection of many functions can not be obtained by a single function in general.  

\begin{center}
\begin{figure}
\includegraphics[width=12.5cm]{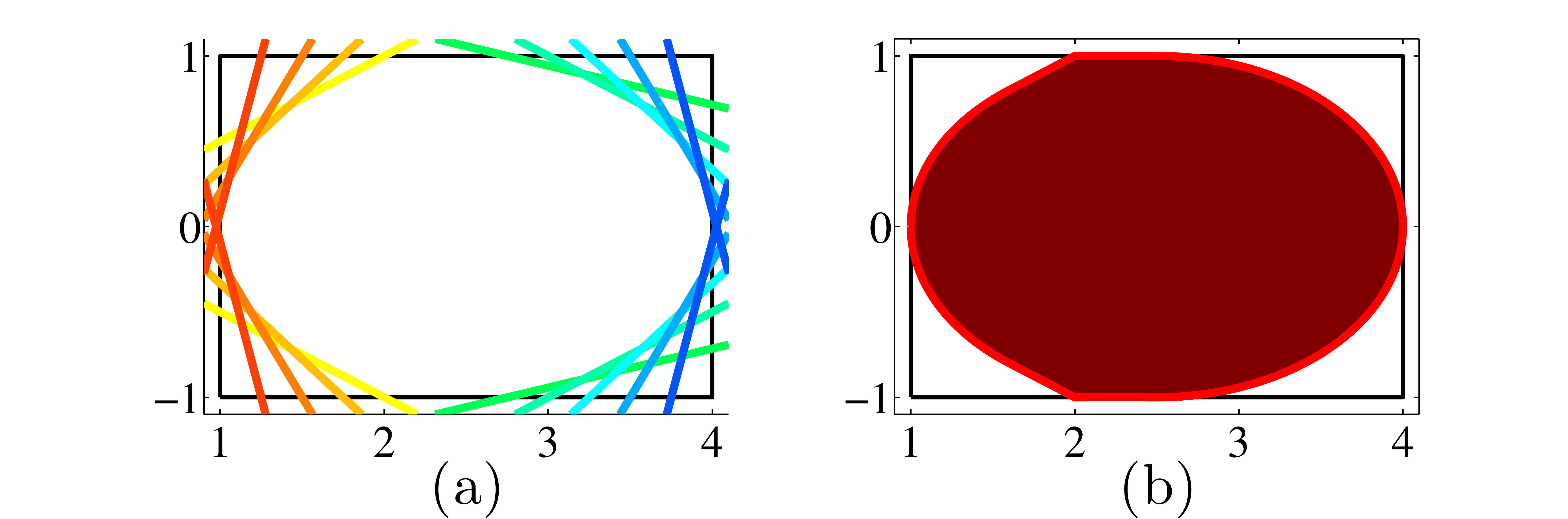}
\caption{The boxes in the figures are $\partial (W(A_1)\times W(A_2))$,  \eqref{3exmat}. In (a) the  lines gives bounds for different $s\in \R\setminus(0,1)$. Panel (b) shows $W(A_1,A_2)$.}\label{2fig:jointnum}
\end{figure}
\end{center}
\end{ex}

\subsection{Domination of selfadjoint operators}
 
For pairs of selfadjoint operators, the case when one of the operators is bounded by the other is commonly studied and a useful relation. The operator $B$ is said to be $A$-bounded if $\dom (A)\subset \dom(B)$ and there are nonnegative constants $\gamma$ and $\gamma'$ such that
 \begin{equation}\label{4unbound}
 	\|Bu\|\leq\gamma\|u\|+ \gamma'\|Au\|,\quad u\in\dom(A).
 \end{equation}
 
It is of interest to see if non-trivial $\Omega\subset W(A)\times W(B)$ can  be obtained from relation \eqref{4unbound}. If $A$ is bounded  then Corollary \ref{3corjofun} can be utilized to find a non-trivial enclosure of $W(A,B)$. From definition \eqref{4unbound} it holds that
 \[
	\ip{B^2u}{u}^{\frac{1}{2}}\leq\gamma\|u\|+\gamma'\ip{A^2u}{u}^{\frac{1}{2}}.
\]
Hence, applying the lower bound of Corollary \ref{3corjofun} for $\ip{B^2u}{u}$ and the upper bound of Corollary \ref{3corjofun} for $\ip{A^2u}{u}$ yields
\[
	\frac{\ip{Bu}{u}}{\|u\|^2}\leq\gamma+ \gamma'\sqrt{\frac{\ip{Au}{u}}{\|u\|^2}(a_0+a_1)-a_0a_1}, 
\]
where $a_0=\inf W(A)$ and $a_1=\sup W(A)$. While this is useful when $A$ is a bounded operator, the bound does not work for unbounded $A$. Hence, for this case we opt for another bound that takes more of the structure of \eqref{4unbound} into consideration to get a non-trivial bound both in the bounded and unbounded case. 

\begin{lem}\label{3lemdom}
Let $A_j$ for $j=1,\hdots,m$ and $B$ be selfadjoint operators in $\Hs$ and assume that $\dom (A_1)\cap\hdots\cap\dom(A_m)$ is dense in $\Hs$. Assume that $A_j\geq0$ and $\ip{A_ju}{A_lu}+\ip{A_lu}{A_ju}\geq 0$, $j,l\in\{1,\hdots m\}$ for $u\in\dom (A_1)\cap\hdots\cap\dom(A_m)$. Furthermore, assume there exists a $\gamma\geq0$, such that for all $u\in\dom (A_1)\cap\hdots\cap\dom(A_m)\subset\dom(B)$ the inequality
\[
	\|Bu\|\leq \gamma\|u\|+ \sum_{j=1}^{m}\|A_ju\|
\]
holds. Then 
\begin{equation}\label{3normbound}
	|\ip{Bu}{u}|\leq \left(\gamma^{\frac{2}{3}} \|u\|^\frac{4}{3}+\sum_{j=1}^{m}  \ip{A_j u}{u}^{\frac{2}{3}}\right)^{\frac{3}{2}}, \quad u\in  \bigcap_{j=1}^m\dom (A_j).
\end{equation}
\end{lem}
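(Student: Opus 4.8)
The plan is to reduce inequality \eqref{3normbound} to a pointwise estimate obtained from the hypothesis $\|Bu\|\le\gamma\|u\|+\sum_{j=1}^m\|A_ju\|$ combined with Cauchy--Schwarz, and then to apply the power-mean (Hölder-type) inequality that governs the exponent $2/3$. First I would fix a unit vector $u\in\bigcap_j\dom(A_j)$ (the general case follows by homogeneity) and write $|\ip{Bu}{u}|\le\|Bu\|\le\gamma+\sum_{j=1}^m\|A_ju\|$. The next step is to control each $\|A_ju\|=\ip{A_j^2u}{u}^{1/2}$ by $\ip{A_ju}{u}$; this is where the assumptions $A_j\ge0$ and $\ip{A_ju}{A_lu}+\ip{A_lu}{A_ju}\ge0$ enter. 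Indeed, writing $S:=\sum_{j=1}^m A_j$, the cross-term hypothesis gives $\ip{S^2u}{u}=\sum_{j,l}\ip{A_ju}{A_lu}\ge\sum_j\|A_ju\|^2$, while $\ip{Su}{u}=\sum_j\ip{A_ju}{u}$; but more directly, each $A_j\ge0$ is selfadjoint, so by the Cauchy--Schwarz inequality $\|A_ju\|^2=\ip{A_j^2u}{u}=\ip{A_j^{1/2}u}{A_j^{3/2}u}$ does not immediately help—so instead I would use $\ip{A_j^2u}{u}^{1/2}\le\ip{A_ju}{u}^{1/2}\,\ip{A_j^3u}{u}^{1/2}\big/\!\ip{A_ju}{u}^{1/2}$, which is still awkward; the clean route is the interpolation bound $\|A_ju\|\le\ip{A_ju}{u}^{1/2}\,\|A_j^{1/2}u\|\cdot(\dots)$. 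Rather, the correct elementary fact is: for $A\ge0$ selfadjoint and $\|u\|=1$, $\ip{A^2u}{u}\ge\ip{Au}{u}^2$ (Jensen), which is the \emph{wrong direction}; what one actually wants is an \emph{upper} bound on $\|A_ju\|$ in terms of $\ip{A_ju}{u}$, which is false in general. So the real mechanism must be different.

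**Revised approach.** The inequality to be proven has $\ip{A_ju}{u}^{2/3}$ on the right and $\|Bu\|$ (hence potentially $\|A_ju\|$) on the left, so the point cannot be to bound $\|A_ju\|$ by $\ip{A_ju}{u}$. Instead I would interpret \eqref{3normbound} as a statement purely about the \emph{bilinear} quantities $\ip{Bu}{u}$ and $\ip{A_ju}{u}$, and derive it by testing the hypothesis on a cleverly chosen vector. The standard trick: for selfadjoint $B$ and fixed unit $u$, $|\ip{Bu}{u}|$ is small compared with $\|Bu\|$ only when $Bu$ is nearly orthogonal to $u$; to exploit the hypothesis I would apply $\|B\cdot\|\le\gamma\|\cdot\|+\sum\|A_j\cdot\|$ not to $u$ but to $u$ tested against the spectral decomposition of $B$, or equivalently use the polarization/quadratic-form version. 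Concretely, write $\ip{Bu}{u}=\ip{Bu}{u}$ and use that $B = \sum\,$(its spectral projections); the key inequality $|\ip{Bu}{u}| \le (\gamma^{2/3}\|u\|^{4/3} + \sum_j \ip{A_ju}{u}^{2/3})^{3/2}$ has the shape of a reverse Hölder / Young inequality with exponents $3/2$ and $3$. So the plan is: (i) from $\|Bu\|\le \gamma\|u\| + \sum_j\|A_ju\|$ and the commutator-positivity, derive the \emph{form} inequality $|\ip{Bu}{v}| \le \gamma\|u\|\|v\| + \sum_j \ip{A_ju}{u}^{1/2}\ip{A_jv}{v}^{1/2}$ for all $u,v$ (using $A_j\ge0$ so $\ip{A_ju}{v}$ is a semi-inner product and Cauchy--Schwarz applies to it), then (ii) specialize and optimize.

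**Main steps.** Step (i): Since each $A_j\ge0$, the sesquilinear form $(u,v)\mapsto\ip{A_ju}{v}$ satisfies $|\ip{A_ju}{v}|\le\ip{A_ju}{u}^{1/2}\ip{A_jv}{v}^{1/2}$. Starting from $\|Bu\|\le\gamma\|u\|+\sum_j\|A_ju\|$, for any unit $v$ we get $|\ip{Bu}{v}|\le\|Bu\|\le\gamma\|u\|+\sum_j\|A_ju\|$; but I want $\|A_ju\|$ replaced by a quantity involving only $\ip{A_ju}{u}$. Here is where the hypothesis $\ip{A_ju}{A_lu}+\ip{A_lu}{A_ju}\ge0$ is used: it guarantees $\big\|\sum_j A_ju\big\|^2=\sum_{j,l}\ip{A_ju}{A_lu}\ge\sum_j\|A_ju\|^2$ — again a lower bound, still the wrong sign for a direct estimate, but it \emph{is} exactly what lets one replace $\sum_j\|A_ju\|$ by $\|\sum_j A_ju\|$ going the other way after a Cauchy--Schwarz in $j$: $\sum_j\|A_ju\| = \sum_j 1\cdot\|A_ju\| \le \sqrt{m}\,(\sum_j\|A_ju\|^2)^{1/2} \le \sqrt m\,\|\sum_j A_ju\|$; this is not sharp enough for the stated constant, so the cross-term hypothesis must instead be feeding a \emph{three-term} Young inequality. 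I would therefore, as Step (ii), set $a=\gamma\|u\|$ and $b_j=\|A_ju\|$, so $|\ip{Bu}{u}|\le a+\sum_j b_j$, and then invoke the elementary inequality
\[
a+\sum_{j=1}^m b_j \ \le\ \Big(a^{2/3}+\sum_{j=1}^m c_j^{2/3}\Big)^{3/2}
\]
valid whenever $b_j\le$ (something expressible via $c_j:=\ip{A_ju}{u}$ and $a$); tracking back, the substitution that makes the exponents $2/3$ and $3/2$ appear is $b_j = c_j^{?}$, which forces using the \emph{operator} inequality $\|A_ju\|\le\ip{A_ju}{u}^{1/2}\,M^{1/2}$ on a two-dimensional reducing subspace — precisely the content of applying Corollary~\ref{3corjofun} (with $y(\alpha)=\alpha^2$) to the pair $(A_j,A_j^2)$ restricted appropriately, together with a scaling argument in $\gamma$.

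**Expected obstacle.** The hard part will be pinning down exactly how the three normalizations — the weight $\gamma^{2/3}\|u\|^{4/3}$ and each $\ip{A_ju}{u}^{2/3}$ — conspire to give an \emph{equality}-achieving configuration, i.e.\ verifying that the exponent $2/3$ (equivalently the outer power $3/2$) is forced by optimizing over the free scaling parameters hidden in \eqref{4unbound}-type estimates; one expects a Lagrange-multiplier / Young-inequality computation showing $\sup$ over admissible $(\|A_ju\|, \ip{A_ju}{u})$ pairs, subject to $\|A_ju\|^2\le \ip{A_ju}{u}\cdot\lambda_j$ with $\lambda_j$ free but linked through the single constant $\gamma$ and the linear constraint $\|Bu\|\le\gamma\|u\|+\sum_j\|A_ju\|$, is attained when $b_j^3 \propto c_j^2 \cdot(\text{common ratio})$, yielding the stated form after collecting terms. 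The positivity assumptions $A_j\ge0$ and $\ip{A_ju}{A_lu}+\ip{A_lu}{A_ju}\ge0$ are what make each such one-variable optimization legitimate (they ensure the relevant forms are genuine semi-inner products and that no negative cross terms spoil the addition over $j$), so I would isolate that as a short lemma and then assemble \eqref{3normbound} by summing the per-$j$ optimal bounds.
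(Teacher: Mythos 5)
Your proposal does not reach a proof: it correctly diagnoses the central difficulty (one cannot bound $\|A_ju\|$ from above by $\ip{A_ju}{u}$), but the ``revised approach'' never resolves it. The form inequality you posit in Step (i), $|\ip{Bu}{v}|\le\gamma\|u\|\|v\|+\sum_j\ip{A_ju}{u}^{1/2}\ip{A_jv}{v}^{1/2}$, is asserted rather than derived; with $v=u$ it would give the linear bound $|\ip{Bu}{u}|\le\gamma\|u\|^2+\sum_j\ip{A_ju}{u}$, which is \emph{stronger} than \eqref{3normbound} (since $(\sum_i x_i)^{3/2}\ge\sum_i x_i^{3/2}$ for $x_i\ge0$) and is nowhere established from the hypotheses. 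Likewise the appeal to Corollary \ref{3corjofun} with $y(\alpha)=\alpha^2$ cannot supply an upper bound on $\|A_ju\|^2=\ip{A_j^2u}{u}$ when $A_j$ is unbounded, since then $\conv{y}=\infty$ --- this is the same dead end as your first attempt. The closing ``Lagrange multiplier'' paragraph describes constraints ($\|A_ju\|^2\le\ip{A_ju}{u}\,\lambda_j$ with $\lambda_j$ ``linked through $\gamma$'') that do not follow from the hypotheses, so there is nothing concrete to optimize; the hard part is explicitly deferred rather than done.

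The paper's proof supplies the two ideas you are missing. First, for arbitrary constants $c_{j,l}>0$ it forms the positive operator $C:=\sum_{j=0}^m A_j\sqrt{1+\sum_{l>j}c_{j,l}+\sum_{l<j}c_{l,j}^{-1}}$ with $A_0:=\gamma I$; the cross-term hypothesis $\ip{A_ju}{A_lu}+\ip{A_lu}{A_ju}\ge0$ gives $\|Cu\|^2\ge\sum_j(1+\cdots)\|A_ju\|^2=\bigl(\sum_j\|A_ju\|\bigr)^2+\sum_{j<l}\bigl(\sqrt{c_{j,l}}\,\|A_ju\|-c_{j,l}^{-1/2}\|A_lu\|\bigr)^2\ge\|Bu\|^2$. (Your use of the cross-term hypothesis only to get $\|\sum_jA_ju\|\ge(\sum_j\|A_ju\|^2)^{1/2}$ discards exactly the weights that make this work.) Second --- and this is the crux --- the norm inequality $\|Bu\|\le\|Cu\|$ with $C\ge0$ is upgraded to the form inequality $\ip{Bu}{u}\le\ip{Cu}{u}$ by a spectral argument (a contradiction built from a reduction of $\overline{B-kC}$; equivalently, $B^2\le C^2$ plus operator monotonicity of the square root give $|B|\le C$). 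Only after that does the Young-type optimization you anticipated enter: minimizing $\ip{Cu}{u}=\sum_j\ip{A_ju}{u}\sqrt{1+\cdots}$ over the $c_{j,l}$, attained at $c_{j,l}=(\ip{A_lu}{u}/\ip{A_ju}{u})^{2/3}$, produces precisely the exponents $2/3$ and $3/2$ in \eqref{3normbound}.
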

\begin{proof}
Define $A_{0}:=\gamma I$ to simplify the notation. We will only show the upper bound, but the proof of the lower limit is completely analogous. First consider the case when $\gamma>0$ and define the operator
\begin{equation}\label{3Cbound}
	C:=\sum_{j=0}^m A_j \sqrt{1+\sum_{l=j+1}^{m}{c_{j,l}}+\sum_{l=0}^{j-1}\frac{1}{c_{l,j}}}, \quad \dom (C):=  \bigcap_{j=1}^m\dom (A_j),
\end{equation}
where, $c_{j,l}>0$ for $0\leq j<l\leq m$  are  some constants.  Since $C$ is a positive sum of positive selfadjoint operators,  $C$ is a selfadjoint operator with the given domain. From definition it follows that for $u\in \dom (C)$
\begin{multline}\label{3badest}
	\|Cu\|\geq\sqrt{\sum_{j=0}^m \|A_ju\|^2 \left(1+\sum_{l=j+1}^{m}{c_{j,l}}+\sum_{l=0}^{j-1}\frac{1}{c_{l,j}}\right) }=\\
	\sqrt{\left(\sum_{j=0}^m   \|A_ju\| \right)^2+\sum_{j=0}^{m-1}\sum_{l=j+1}^{m}\left(\sqrt{c_{j,l}}\|A_ju\|-\frac{1}{\sqrt{c_{j,l}}}\|A_lu\| \right)^2}\geq  \sum_{j=0}^m   \|A_ju\|\geq\|Bu\|.
\end{multline}

Assume that there is an $u\in\dom(C)$ such that $\ip{Bu}{u}>\ip{Cu}{u}$. It then follows that there exists a $k>1$, such that $\ip{Bu}{u}>\ip{kCu}{u}$.
 Define the selfadjoint operator $D:=\overline{B-kC}$, $\dom(D):=\{u\in\Hs: \|Du\|<\infty\}\supset \dom(C)$ and thus
\[
	\ip{Du}{u}=\ip{Bu}{u}-\ip{kCu}{u}>0.
\]

This implies that there is some $\lambda>0$ such that $\lambda\in\sigma(D)$. Let $\{\Hs_i\}_{i=1}^{\infty}$ be a reduction of $D$ as in \eqref{3sepa} and  for $i=1,2,\hdots$ let  $D_i$ denote the operator in $\Hs_i$. Then there is some $i$ such that  $\lambda\in \sigma(D_i)$. Since $D_i$ is a bounded selfadjoint operator it follows by the spectral theorem \cite[Corollary of Theorem VII.3]{MR0493419}, that there exists a  finite measure space $(X,\mu)$, a unitary operator $U:L^2(X,\mu)\rightarrow \Hs_i$  and a multiplication operator $M_{h}\in \Bs( L^2(X,\mu))$ such that
\[
	D_i=UM_hU^*.
\]
Here $M_{h}$ denotes multiplication with the function $h\in L^\infty(X,\mu)$ with essential range $\sigma(D_i)$, see \eqref{3essran}. Since $\lambda\in\sigma(D_i)$ and $\lambda>0$ there is some function $\varphi\in L^2(X,\mu)$ such that 
\[
	\mu(\supp \varphi)>0, \quad\supp \varphi\subset \{x\in X:h(x)>0\}.
\]
This implies that
\[
	M_h\varphi=M_{h_0}\varphi,\quad h_0(x):=\max(h(x),0), 
\]
thus $M_{h_0}$  is a positive semi-definite operator and $\|M_{h_0}\varphi\|_{\Ls^2(X,\mu)}>0$. Define $v:=V_iU\varphi$, then 
\[
	Dv=V_iUM_{h}U^*V_i^*v=V_iUM_{h}\varphi=V_iUM_{h_0}\varphi=Ev, \quad E:=V_iUM_{h_0}U^*V_i^*.
\]
If $v\in\dom(C)$ then this means that $Bv=(kC+E)v$ and thus since $\|Ev\|>0$  and $E$ is positive semi-definite 
\[
	 \|Bv\|=\|(kC+E)v\|>\|Cv\|,
\]
which contradicts \eqref{3badest}.
If $v\notin \dom (C)$ then since $\dom (C)$ is dense in $\dom (D)$ and $v\in V_i\Hs_i$ there is a unit vector  $v'\in\dom (C)\cap V_i\Hs_i$ such that $(k-1)\|Cv'\|>\|D_i\|_{\Hs_i}$. 
Then it follows that
\[
	\|Bv'\|=\|kCv'+Dv'\|\geq k\|Cv'\|-\|D_i\|_{\Hs_i}>\|C v'\|.
\]
Hence, we have a contradiction of \eqref{3badest}. 
Thus, for $u\in\dom (C)$ it holds that
\[
	\ip{Bu}{u}\leq \ip{Cu}{u}=\sum_{j=0}^m \ip{A_j u}{u} \sqrt{1+\sum_{l=j+1}^{m}{c_{j,l}}+\sum_{l=0}^{j-1}\frac{1}{c_{l,j}}}.
\]
This holds for all choices of constants $c_{j,l}>0$. Hence, for each $u\in\dom(A)$ we are interested in the $c_{j,l}>0$ that minimizes $\ip{Cu}{u}$, \eqref{3Cbound} and thus obtain the sharpest possible bound using similar reasoning as in Example \ref{bound:ex}. By simple analysis it follows that the minimizing $c_{j,l}$ are
\[
	c_{j,l}=\left(\frac{ \ip{A_lu}{u}}{ \ip{A_ju}{u}}\right)^{\frac{2}{3}}.
\]
Using these $c_{j,l}$ the result \eqref{3normbound} follows for $\gamma>0$. Let $\gamma=0$ and assume that the result does not hold, then for some $u\in\dom(A_1)\cap\hdots\cap\dom(A_m)$
\[
\left(\sum_{j=1}^{m}\ip{A_j u}{u}^{\frac{2}{3}}\right)^{\frac{3}{2}}<\ip{Bu}{u}\leq\left(\epsilon^{\frac{2}{3}} \|u\|^\frac{4}{3}+\sum_{j=1}^{m} \ip{A_j u}{u}^{\frac{2}{3}}\right)^{\frac{3}{2}},
\]	
for all $\epsilon>0$. But, this is a contradiction since the upper bound on $\ip{Bu}{u}$  is continuous in $\epsilon$, which can be arbitrarily small. 
\end{proof}

If $\gamma=0$ and $m=1$ in Lemma \ref{3lemdom}  it follows that $|\ip{Bu}{u}|\leq\ip{A_1u}{u}$.  Additionally, if $A_1$ is unbounded and $A_j$ are bounded for $j>1$, then \eqref{3normbound} behaves like $\ip{A_1u}{u}$  for large $\ip{A_1u}{u}$. 
\begin{cor}\label{3cordom}
Let $A_j$, $j=1,\hdots,n$ be selfadjoint operators in $\Hs$ and assume that $\dom (\ve{A})$ is dense in $\Hs$. Let $k\in \{1,\hdots,n\}$, $M\subset\{1,\hdots,n\}\setminus\{k\}$, and let 
 $y_j:\overline{W(A_j)}\rightarrow \R $ be Borel functions that are bounded on bounded sets where $y_j(A_j)$ is defined as in \eqref{3yava}.  Assume that $y_j(A_j)\geq0$, $\ip{y_j(A_j)u}{y_l(A_l)u}+\ip{y_l(A_l)u}{y_j(A_j)u}\geq 0$, $j,l\in M$ for $u\in \dom(\ve{A})$ and that there exists an $\gamma\geq0$, such that 
\[
	\|A_ku\|\leq \gamma\|u\| +\sum_{j\in M}\|y_j(A_j)u\|, \quad u\in \dom(\ve{A})\subset\bigcap_{j\in M}\dom(y_j(A_j))\subset \dom(A_k).
\]
Then
\begin{equation}\label{2transbo}
	W(\ve{A})\subset \left\{\ve{\alpha}\in W(A_1)\times\hdots\times W(A_n):|\alpha_k|\leq \left(\gamma^{\frac{2}{3}}+\sum_{j\in M} \conv{y_j}(\alpha_j)^{\frac{2}{3}}\right)^{\frac{3}{2}}\right\},
\end{equation}
where $\conv{y_j}$ is defined as in Definition \ref{convdef} for $W(A_j)$.
\end{cor}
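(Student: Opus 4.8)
The plan is to obtain the enclosure as a direct combination of Lemma \ref{3lemdom} (applied with $B:=A_k$ and with the family $\{y_j(A_j)\}_{j\in M}$ in the role of the operators $A_1,\dots,A_m$ there) and Proposition \ref{3lemjofun} (used to replace each operator quantity $\ip{y_j(A_j)u}{u}$ by the envelope $\conv{y_j}(\alpha_j)$, which depends only on $\alpha_j=\ip{A_ju}{u}$). First I would verify the hypotheses of Lemma \ref{3lemdom} for this subfamily: the positivity $y_j(A_j)\geq0$ and the nonnegativity of the symmetrized cross terms $\ip{y_j(A_j)u}{y_l(A_l)u}+\ip{y_l(A_l)u}{y_j(A_j)u}\geq0$ are assumed outright; the norm domination $\|A_ku\|\leq\gamma\|u\|+\sum_{j\in M}\|y_j(A_j)u\|$ is exactly the standing assumption; and the density requirement is inherited, since $\dom(\ve{A})$ is dense in $\Hs$ and $\dom(\ve{A})\subset\bigcap_{j\in M}\dom(y_j(A_j))$, so the latter intersection is dense as well. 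Lemma \ref{3lemdom} then yields, for every $u\in\bigcap_{j\in M}\dom(y_j(A_j))$ and in particular for every $u\in\dom(\ve{A})$,
\[
|\ip{A_ku}{u}|\leq\left(\gamma^{\frac{2}{3}}\|u\|^{\frac{4}{3}}+\sum_{j\in M}\ip{y_j(A_j)u}{u}^{\frac{2}{3}}\right)^{\frac{3}{2}},
\]
which for unit vectors reduces to $|\ip{A_ku}{u}|\leq(\gamma^{2/3}+\sum_{j\in M}\ip{y_j(A_j)u}{u}^{2/3})^{3/2}$.

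Next I would fix $\ve{\alpha}\in W(\ve{A})$ and choose a unit vector $u\in\dom(\ve{A})$ with $\alpha_j=\ip{A_ju}{u}$ for all $j$; the inclusion $W(\ve{A})\subset W(A_1)\times\cdots\times W(A_n)$ is immediate from \eqref{2defwab}. For each $j\in M$, Proposition \ref{3lemjofun} gives
\[
(\alpha_j,\ip{y_j(A_j)u}{u})\in{\rm Co}(\{(\alpha,y_j(\alpha)):\alpha\in\sigma(A_j)\})\subset{\rm Co}(\{(\alpha,y_j(\alpha)):\alpha\in\overline{W(A_j)}\}),
\]
so by Definition \ref{convdef} (with $X=W(A_j)$) we obtain $0\leq\ip{y_j(A_j)u}{u}\leq\conv{y_j}(\alpha_j)$, the lower bound coming from $y_j(A_j)\geq0$. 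Since $t\mapsto t^{2/3}$ is nondecreasing on $[0,\infty)$, a finite sum of nondecreasing functions is nondecreasing in each summand, and $s\mapsto s^{3/2}$ is nondecreasing, the envelope function $(t_j)_{j\in M}\mapsto(\gamma^{2/3}+\sum_{j\in M}t_j^{2/3})^{3/2}$ is monotone in each variable; substituting the above bound into the unit-vector inequality therefore gives
\[
|\alpha_k|=|\ip{A_ku}{u}|\leq\left(\gamma^{\frac{2}{3}}+\sum_{j\in M}\ip{y_j(A_j)u}{u}^{\frac{2}{3}}\right)^{\frac{3}{2}}\leq\left(\gamma^{\frac{2}{3}}+\sum_{j\in M}\conv{y_j}(\alpha_j)^{\frac{2}{3}}\right)^{\frac{3}{2}},
\]
which is exactly the bound defining the set in \eqref{2transbo}.

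I do not expect a serious obstacle here: the corollary is essentially a bookkeeping reduction to Lemma \ref{3lemdom} together with the envelope estimate of Proposition \ref{3lemjofun}. The only points warranting care are (i) checking that the chain of domain inclusions $\dom(\ve{A})\subset\bigcap_{j\in M}\dom(y_j(A_j))\subset\dom(A_k)$ genuinely lets one invoke Lemma \ref{3lemdom} on the subfamily indexed by $M$ (in particular that the relevant intersection of domains is dense), and (ii) recording the monotonicity of the final envelope function, so that replacing $\ip{y_j(A_j)u}{u}$ by the possibly larger $\conv{y_j}(\alpha_j)$ can only enlarge the right-hand side. If $\conv{y_j}(\alpha_j)=\infty$ for some $j$ (for instance when $A_j$ is unbounded), the asserted inequality holds trivially, so no separate case analysis is needed.
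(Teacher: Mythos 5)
Your proposal is correct and follows the paper's own argument: apply Lemma \ref{3lemdom} with $B:=A_k$ and the family $\{y_j(A_j)\}_{j\in M}$, then use Proposition \ref{3lemjofun} to bound $\ip{y_j(A_j)u}{u}$ from above by $\conv{y_j}(\alpha_j)$ and from below by $0$, and conclude by monotonicity of the envelope. The extra care you take with the domain inclusions and the monotonicity of $(t_j)\mapsto(\gamma^{2/3}+\sum t_j^{2/3})^{3/2}$ only makes explicit what the paper leaves implicit.
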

\begin{proof}
From Lemma \ref{3lemdom} it follows that for unit vectors $u\in \dom(\ve{A})$
\[
	|\ip{A_ku}{u}|\leq \left(\gamma^{\frac{2}{3}} \|u\|^\frac{4}{3}+\sum_{j\in M} \ip{y_j(A_j) u}{u}^{\frac{2}{3}}\right)^{\frac{3}{2}}, 
\]
and the result then is given by the inequality  $0\leq \ip{y_j(A_j) u}{u}\leq \conv{y_j}(\ip{A_j u}{u})$ given by  Proposition~\ref{3lemjofun}.
\end{proof}

Corollary \ref{3cordom} and/or Proposition \ref{2transprop} can be used any number of times to obtain a smaller enclosure of $W(\ve{A})$  as in Example \ref{bound:ex}.

\begin{rem}
The converse statement of Corollary \ref{3cordom} does not hold. For example, let $A_1$ and $A_2$ be defined as in Example \ref{bound:ex}, then $\ip{A_1u}{u}\geq 2\ip{A_2u}{u}$ for all $u$ but $\|A_2u\|=\|u\|$ which is larger than $2^{-1}\|A_1u\|$ for some $u\in \C^4$.
\end{rem}

\section{Enclosure of the numerical range}\label{4sect}
This section generalizes  results in \cite[Section 2]{ATEN}, to the closure of the operator function \eqref{tmulthat}, and closed $\Omega$ such that $W(\ve{A})\subset \Omega\subset \overline{W(A_1)}\times\hdots\times \overline{W(A_n)}$. Define the set $W_\Omega(T)$ by \eqref{1encg}:
\begin{equation}\label{2enmu}
W_\Omega(T)=\{\omega\in \mathcal{C}: \exists \ve{\alpha}\in \Omega, t_{\ve{\alpha}}(\omega)=0\}.
\end{equation}
$W_\Omega(T)$ is an enclosure of $W_{W(\ve{A})}(T)$ and from Corollary \ref{2corclos} and Lemma \ref{2lemwta} it follows that $W_\Omega(T)$ is related to $W(T)$ and $\overline{W_\Omega(T)}\supset\overline{W(T)}$ if $T$ is either holomorphic or bounded.

We define the set $\parti\Omega$ iteratively. Let $J_\Omega$ denote the set
\begin{equation}\label{4infcond}
	J_\Omega:=\{j\in\{1,\hdots,n\}:\inf_{\ve{\alpha}\in\Omega} \alpha_j=-\infty \ \&\  \sup_{\ve{\alpha}\in\Omega} \alpha_j=\infty \}
\end{equation}
and define 
\begin{equation}\label{boundinf}
\begin{array}{l c c}
	\parti\Omega=\parti\Omega^+\cup\parti\Omega^-&J_{\Omega}\neq\emptyset &\text{where } \Omega^{\pm}:=\{\alpha\in \Omega:\pm\alpha_{\min J}\geq0\}\\
	\parti\Omega=\partial\Omega& J_{\Omega}=\emptyset
\end{array}.
\end{equation}

Hence,   $\parti\Omega\supset\partial\Omega$, with equality if each operator in $\ve{A}$ is bounded either from below or above.

\begin{lem}\label{genbound}
Let $A_j$, $j=1,\hdots,n$ be selfadjoint operators and let $\Omega$ denote a closed set satisfying  $W(\ve{A})\subset\Omega\subset \overline{W(A_1)}\times\dots\times\overline{W(A_n)}$. Denote by $L$ a straight line in $\R^n$ where $\Omega\cap L\neq\emptyset$. Then $L\cap \parti\Omega\neq \emptyset$, where $\parti\Omega$ is defined in \eqref{boundinf}.
\end{lem}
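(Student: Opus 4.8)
The plan is to reduce the statement to a one-dimensional intermediate-value argument along the line $L$. Write $L = \{p + t v : t \in \R\}$ for a fixed point $p \in \R^n$ and direction $v \neq 0$. I first dispose of the case $J_\Omega = \emptyset$, where $\parti\Omega = \partial\Omega$: here $\Omega$ is closed and, because $J_\Omega$ is empty, for every index $j$ the $j$-th coordinate is bounded above or below on $\Omega$, so $\Omega$ cannot contain a full line; hence $\Omega \cap L$ is a proper nonempty closed subset of $L \cong \R$, and its boundary relative to $L$ is nonempty. A boundary point of $\Omega \cap L$ in $L$ is a limit of points outside $\Omega$, hence lies in $\partial\Omega = \parti\Omega$, which settles this case. (One should note $\Omega \cap L$ need not be bounded if some coordinate is only bounded on one side, but a one-sided-bounded nonempty proper closed subset of $\R$ still has a boundary point — e.g.\ its infimum or supremum in the bounded direction.)

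The substantive case is $J_\Omega \neq \emptyset$. Let $k := \min J_\Omega$ and split $\Omega = \Omega^+ \cup \Omega^-$ along the sign of the $k$-th coordinate as in \eqref{boundinf}, with $\parti\Omega = \parti\Omega^+ \cup \parti\Omega^-$. Consider the affine function $\ell(t) := (p + tv)_k$, the $k$-th coordinate along $L$. If $v_k = 0$, then $L$ lies entirely in one of the closed half-spaces $\{\alpha_k \geq 0\}$ or $\{\alpha_k \leq 0\}$ (or on the hyperplane $\alpha_k = 0$), so $\Omega \cap L$ coincides with $\Omega^\pm \cap L$ for the appropriate sign, and I can rerun the argument of the previous paragraph with $\Omega^\pm$ in place of $\Omega$: since $k \in J_\Omega$ but $\Omega^\pm$ restricts $\alpha_k$ to a half-line, $\Omega^\pm$ contains no line, so $\Omega^\pm \cap L$ is a proper nonempty closed subset of $L$ and has a relative boundary point, which lies in $\partial\Omega^\pm \subset \parti\Omega$. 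If $v_k \neq 0$, then $\ell$ is a genuine bijection $\R \to \R$; let $t_0$ be the unique parameter with $\ell(t_0) = 0$, so that the ray $t \geq t_0$ maps into $\Omega$'s candidate region for $\Omega^+$ and $t \leq t_0$ for $\Omega^-$. Pick any $t_* $ with $p + t_* v \in \Omega$; WLOG $t_* \geq t_0$, so $p + t_*v \in \Omega \cap \{\alpha_k \geq 0\} \subset \overline{\Omega^+}$ — actually $\in \Omega^+$ since $\Omega^+$ is already closed. Now restrict attention to the half-line $\{t \geq t_0\}$: the set $S := \{ t \geq t_0 : p + tv \in \Omega^+\}$ is nonempty (contains $t_*$), closed, and a proper subset of $[t_0,\infty)$, because $\Omega^+$ does not contain the line $L$ — indeed $k \in J_\Omega$ forces $\alpha_k \to +\infty$ to be attainable in $\Omega$, but the issue is whether $\Omega^+$ contains the whole ray; if it did, then combined with some point of $\Omega^-$ we could still be fine, but the cleanest route is: if $S = [t_0,\infty)$ the point $p + t_0 v$ has $\alpha_k = 0$, hence also lies in $\Omega^-$ (boundary of both halves), and $t_0 \in \partial\Omega^-$ relative to $L$ unless $\Omega^-$ too contains a neighborhood — pushing the contradiction back to $\Omega$ containing $L$, which is impossible since $\Omega \subset \overline{W(A_1)} \times \cdots \times \overline{W(A_n)}$ and no coordinate interval is all of $\R$ unless... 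I would instead argue directly that $\partial_L S \neq \emptyset$ or $t_0 \in \partial_L(\Omega^- \cap L)$, in either case producing a point of $\parti\Omega \cap L$.

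The main obstacle is the bookkeeping of the several degenerate configurations: $v_k = 0$; the line $L$ being contained in the hyperplane $\{\alpha_k = 0\}$; $\Omega^+ \cap L$ or $\Omega^- \cap L$ being a half-line rather than a compact interval; and the possibility that $\Omega \cap L$ meets the splitting hyperplane exactly at an endpoint. In each of these one must be careful that "proper closed subset of $\R$ (or of a half-line)" genuinely forces a relative boundary point and that this boundary point, approached from outside $\Omega^\pm$, lies in $\partial\Omega^\pm$ and hence in $\parti\Omega$ — the key structural input being that $\Omega$, lying inside a product of proper subintervals of $\R$ in every coordinate \emph{except possibly those in $J_\Omega$}, and with the $J_\Omega$ coordinates split by the half-space condition, can never contain an entire line. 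Once that "no line in $\Omega^\pm$" fact is nailed down uniformly, the intermediate-value / boundary-of-a-proper-closed-set argument closes all cases.
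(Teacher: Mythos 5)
Your case $J_\Omega=\emptyset$ is fine, but the main case $J_\Omega\neq\emptyset$ has a genuine gap: you only ever descend one level into the recursive definition \eqref{boundinf}, producing points of $\partial\Omega^{+}$ or $\partial\Omega^{-}$, whereas $\parti\Omega^{\pm}$ is itself defined by applying \eqref{boundinf} to $\Omega^{\pm}$ and may be strictly larger than $\partial\Omega^{\pm}$. Concretely, in your branch $v_k=0$ you assert that ``$\Omega^{\pm}$ contains no line'' because $\alpha_k$ is restricted to a half-line; this only excludes lines whose direction has a nonzero $k$-th component, and $L$ has $v_k=0$ by hypothesis. Take $n=2$, $\overline{W(A_1)}=\overline{W(A_2)}=\R$, $\Omega=\R^2$, and $L=\{(1,t):t\in\R\}$: here $k=1$, $L\subset\Omega^{+}=\{\alpha_1\ge 0\}$, so $\Omega^{+}\cap L=L$ is not a proper subset of $L$ and $L\cap\partial\Omega^{+}=\emptyset$. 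The lemma still holds only because $J_{\Omega^{+}}=\{2\}\neq\emptyset$, so $\parti\Omega^{+}$ involves a further split along $\alpha_2$ and contains the half-axis $\{\alpha_1\ge0,\ \alpha_2=0\}$, which $L$ does meet --- but your argument never reaches this level. The concluding hope that the ``no line in $\Omega^{\pm}$'' fact ``can be nailed down uniformly'' cannot be realized, because the fact is false; this is exactly the point at which the proof must recurse.

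The repair is the iteration that the definition itself suggests, and which the paper uses: if $L\cap\partial\Omega=\emptyset$ then, $\Omega$ being closed and $L$ connected, $L\subset\Omega$, which forces $J_\Omega\neq\emptyset$; $L$ meets $\Omega^{+}$ or $\Omega^{-}$, and if it misses $\partial\Omega^{\pm}$ it is contained in $\Omega^{\pm}$, whence $J_{\Omega^{\pm}}\neq\emptyset$ again but with $\min J_\Omega$ removed (since $\alpha_{\min J_\Omega}$ is now semibounded). Each step strictly shrinks $J$, so after at most $n$ steps one reaches a set $\Omega'$ with $L\cap\Omega'\neq\emptyset$, $\parti\Omega'\subset\parti\Omega$ and $J_{\Omega'}=\emptyset$; such a set cannot contain the full line $L$, so $L\cap\partial\Omega'\neq\emptyset$. (Your $v_k\neq0$ subcase, where the whole ray $\{t\ge t_0\}$ maps into $\Omega^{+}$, can be closed without any ``unless'': the point $p+t_0v$ has $\alpha_k=0$, hence lies in $\Omega^{-}$, while the points $p+tv$ with $t$ slightly beyond $t_0$ have $\alpha_k>0$ and are therefore outside $\Omega^{-}$ regardless of whether they belong to $\Omega$; thus $p+t_0v\in\partial\Omega^{-}\subset\parti\Omega$. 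But this does not rescue the $v_k=0$ branch.)
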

\begin{proof}
Assume that $L\cap \partial\Omega= \emptyset$, then  $L\subset\Omega$. Consequently $J_\Omega\neq\emptyset$ and thus $L\cap \Omega^{+}\neq\emptyset$ or $L\cap \Omega^{-}\neq\emptyset$, where $\Omega^{\pm}$ is defined in \eqref{boundinf}. If $L\cap\partial \Omega^{\pm}\neq\emptyset$ we are done, otherwise $L\subset\Omega^{\pm}$. It then follows that $J_{\Omega^{\pm}}\neq\emptyset$ and \eqref{boundinf} can be applied again. By doing this iteratively at most $n$ times we obtain a set $\Omega'$ such that $\parti\Omega'\subset\parti\Omega$, $L\cap\Omega'\neq\emptyset$ and $J_{\Omega'}=\emptyset$. It then follows that $L\nsubset\Omega'$ and thus $L\cap\partial\Omega'\subset L\cap\parti\Omega\neq\emptyset$.
\end{proof}

In the following we consider two cases separately.
In the first part we will consider the case with two  operator coefficients as in  \cite{ATEN}. In the second part we consider the case when there are more then two operator coefficients. These cases are studied separately since the results  can be improved  when  $n=2$.

\subsection{Functions with two operator coefficients}\label{sec:gent}
Consider $T$ given as the closure of \eqref{tmulthat} with $n=2$, (in this case $W(\ve{A})$ is convex even though we do not utilize it here).

\begin{prop}\label{det0c} 
Let $T$ be defined as the closure of \eqref{tmulthat} with $n=2$ and  let $\Omega$ be a closed set satisfying $W(A_1,A_2)\subset \Omega\subset \overline{W(A_1)}\times \overline{W(A_2)}$.
Assume that ${\rm Im}(f^{(1)}(\omega)\overline{f^{(2)}(\omega)})=0$ and let $W_\Omega(T)$ denote the set \eqref{2enmu}. 
Then, $\omega\in W_\Omega(T)$ if and only if the degenerate system 
\begin{equation}\label{21ran}
\begin{bmatrix}
f^{(1)}_\Re(\omega) & f^{(2)}_\Re(\omega)\\
f^{(1)}_\Im(\omega) & f^{(2)}_\Im(\omega)
\end{bmatrix}
\begin{bmatrix}
\alpha_1\\
\alpha_2
\end{bmatrix}=-
\begin{bmatrix}
g_\Re(\omega)\\
g_\Im(\omega) 
\end{bmatrix},
\end{equation}
has a solution in $\parti\Omega$.

\end{prop}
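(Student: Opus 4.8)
The plan is to unwind the definitions on both sides and observe that, because $\mathrm{Im}\bigl(f^{(1)}(\omega)\overline{f^{(2)}(\omega)}\bigr)=0$, the $2\times 2$ matrix in \eqref{21ran} is degenerate, so the condition $t_{\ve\alpha}(\omega)=0$ is equivalent to $\ve\alpha$ lying on a certain line $L=L(\omega)$ in $\R^2$. First I would spell out that $t_{\ve\alpha}(\omega)=0$ means exactly $g(\omega)+\alpha_1 f^{(1)}(\omega)+\alpha_2 f^{(2)}(\omega)=0$, and splitting into real and imaginary parts this is precisely the linear system \eqref{21ran}. Thus $\omega\in W_\Omega(T)$ iff \eqref{21ran} has a solution $\ve\alpha\in\Omega$. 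What remains is to replace ``$\ve\alpha\in\Omega$'' by ``$\ve\alpha\in\parti\Omega$'' using the degeneracy hypothesis together with Lemma \ref{genbound}.

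The key observation is that the vanishing of $\mathrm{Im}(f^{(1)}(\omega)\overline{f^{(2)}(\omega)}) = f^{(1)}_\Re f^{(2)}_\Im - f^{(1)}_\Im f^{(2)}_\Re$ is exactly the statement that the determinant of the coefficient matrix in \eqref{21ran} is zero. Hence the solution set of \eqref{21ran} in $\R^2$, if nonempty, is either all of $\R^2$ (when the right-hand side is also in the range, which is automatic here once a solution exists and the matrix is zero, or a full line otherwise) — in any case it is an affine subspace, i.e.\ a line $L$ (or all of $\R^2$, or empty). So the ``only if'' direction is immediate: if some $\ve\alpha\in\parti\Omega\subset\Omega$ solves \eqref{21ran} then $t_{\ve\alpha}(\omega)=0$ and $\omega\in W_\Omega(T)$. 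For the ``if'' direction, suppose $\omega\in W_\Omega(T)$; then \eqref{21ran} has a solution in $\Omega$, so the affine solution set $L$ meets $\Omega$, i.e.\ $L\cap\Omega\neq\emptyset$. When $L$ is a genuine line (or degenerates to all of $\R^2$, which one handles by picking any line through an existing solution point inside $\Omega$), Lemma \ref{genbound} gives $L\cap\parti\Omega\neq\emptyset$; any point of that intersection is a solution of \eqref{21ran} lying in $\parti\Omega$, which is what we want.

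The one case needing a little care is when the solution set $L$ is not one-dimensional. If the coefficient matrix is the zero matrix, then \eqref{21ran} is solvable iff $g(\omega)=0$, and then every $\ve\alpha\in\R^2$ is a solution; one then just needs $\parti\Omega\neq\emptyset$, which holds whenever $\Omega\neq\emptyset$ (apply Lemma \ref{genbound} to any line meeting $\Omega$). If the matrix has rank one, the solution set is a genuine line and Lemma \ref{genbound} applies directly. I would organize the argument by these rank cases, but the substance is the same: degeneracy forces the solution set to be an affine subspace of dimension $\ge 1$ whenever it is nonempty, and Lemma \ref{genbound} pushes any such subspace meeting $\Omega$ down onto $\parti\Omega$.

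The main obstacle I anticipate is purely bookkeeping: being careful that ``solution set is a line'' is the right dichotomy (as opposed to a point), which is precisely where the hypothesis $\mathrm{Im}(f^{(1)}(\omega)\overline{f^{(2)}(\omega)})=0$ is used, and making sure the degenerate sub-cases (zero matrix, and the possibility that $f^{(1)}(\omega)$ or $f^{(2)}(\omega)$ vanishes) are folded in correctly so that Lemma \ref{genbound} is always applicable. There is no analytic difficulty here — it is a linear-algebra reduction plus an invocation of the previously established geometric lemma about lines meeting $\parti\Omega$.
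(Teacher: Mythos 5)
Your proposal is correct and follows essentially the same route as the paper: identify $\omega\in W_\Omega(T)$ with solvability of \eqref{21ran} in $\Omega$, note that the hypothesis makes the coefficient matrix singular so the solution set is an affine subspace of dimension at least one, and invoke Lemma \ref{genbound} to push a solution onto $\parti\Omega$. Your explicit treatment of the rank-zero case (picking any line through a solution point) is a small point the paper's proof glosses over, but it is the same argument in substance.
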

\begin{proof}
Assume that $\omega\in W_{\Omega}(T)$, then $t_{(\alpha_1,\alpha_2)}(\omega)=0$ for some $(\alpha_1,\alpha_2)\in \Omega$, which thus is a solution to the system \eqref{21ran}. Since, ${\rm Im}(f^{(1)}(\omega)\overline{f^{(2)}(\omega)})=0$ this system is degenerate. Hence, there is a line of solutions to this problem in $\R^2$. Lemma \ref{genbound} yields that \eqref{21ran} has a solution in $\parti\Omega$. The converse statement follows directly.
\end{proof}

\begin{prop}\label{detc}
Let  $T$ be defined as the closure of \eqref{tmulthat} with $n=2$ and let  $\Omega$ be a closed set satisfying $W(A_1,A_2)\subset \Omega\subset \overline{W(A_1)}\times \overline{W(A_2)}$. 
Assume that ${\rm Im}(f^{(1)}(\omega)\overline {f^{(2)}(\omega)})\neq0$, and let $W_\Omega(T)$   denote the set  \eqref{2enmu}. Then $\omega\in W_\Omega(T)$ if and only if
\begin{equation}\label{result}
\left(\frac{{\rm Im}(f^{(2)}(\omega)\overline{g(\omega)})}{{\rm Im}(f^{(1)}(\omega)\overline{f^{(2)}(\omega)})},\frac{{\rm Im}(\overline{f^{(1)}(\omega)}g(\omega))}{{\rm Im}(f^{(1)}(\omega)\overline{f^{(2)}(\omega)})}\right)\in\Omega.
\end{equation}
\end{prop}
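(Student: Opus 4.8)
The plan is to solve the linear system \eqref{21ran} explicitly, using the hypothesis that its coefficient matrix is now invertible. Since $\omega\in W_\Omega(T)$ means precisely that $t_{(\alpha_1,\alpha_2)}(\omega)=0$ for some $(\alpha_1,\alpha_2)\in\Omega$, and the real and imaginary parts of $t_{(\alpha_1,\alpha_2)}(\omega)=g(\omega)+\alpha_1 f^{(1)}(\omega)+\alpha_2 f^{(2)}(\omega)$ are exactly the two rows of \eqref{21ran} (with the $\alpha_j$ real), the equivalence $\omega\in W_\Omega(T)\iff\eqref{21ran}$ has a solution in $\Omega$ holds verbatim from the definitions. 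So the only real content is to compute that solution in closed form and recognize it as the pair of quotients in \eqref{result}.

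First I would note that the determinant of the $2\times 2$ coefficient matrix in \eqref{21ran} is $f^{(1)}_\Re(\omega)f^{(2)}_\Im(\omega)-f^{(1)}_\Im(\omega)f^{(2)}_\Re(\omega)={\rm Im}(\overline{f^{(1)}(\omega)}f^{(2)}(\omega))=-{\rm Im}(f^{(1)}(\omega)\overline{f^{(2)}(\omega)})$, which is nonzero by hypothesis; hence the system has a unique solution. Then I would apply Cramer's rule: $\alpha_1$ is the determinant of the matrix with the first column replaced by $-(g_\Re(\omega),g_\Im(\omega))^T$, divided by the system determinant, and similarly for $\alpha_2$. Carrying out these $2\times 2$ determinant evaluations and rewriting the resulting bilinear expressions in $f^{(j)}$ and $g$ in terms of imaginary parts of products of complex numbers gives $\alpha_1={\rm Im}(f^{(2)}(\omega)\overline{g(\omega)})/{\rm Im}(f^{(1)}(\omega)\overline{f^{(2)}(\omega)})$ and $\alpha_2={\rm Im}(\overline{f^{(1)}(\omega)}g(\omega))/{\rm Im}(f^{(1)}(\omega)\overline{f^{(2)}(\omega)})$; one has to be a little careful with the signs coming from the column replacement and from the identity ${\rm Im}(\overline z w)=-{\rm Im}(z\overline w)$, but this is purely mechanical. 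Thus the unique solution of \eqref{21ran} is exactly the point appearing in \eqref{result}.

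Finally I would assemble the equivalence: $\omega\in W_\Omega(T)$ iff \eqref{21ran} is solvable in $\Omega$ iff its unique solution lies in $\Omega$ iff \eqref{result} holds. Unlike Proposition \ref{det0c}, there is no need to invoke Lemma \ref{genbound} or the set $\parti\Omega$, precisely because the nondegeneracy assumption forces uniqueness rather than a line of solutions. The only mildly delicate point — and the one I would double-check — is the bookkeeping of signs in the Cramer's-rule computation, to make sure the numerators come out as $f^{(2)}(\omega)\overline{g(\omega)}$ and $\overline{f^{(1)}(\omega)}g(\omega)$ rather than their conjugates; once that is pinned down the proof is a two-line argument referring back to the displayed computation.
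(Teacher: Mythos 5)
Your proposal is correct and follows exactly the paper's argument: since $\alpha_1,\alpha_2$ are real, $t_{(\alpha_1,\alpha_2)}(\omega)=0$ is the real $2\times2$ linear system \eqref{21ran}, which under the nondegeneracy hypothesis ${\rm Im}(f^{(1)}(\omega)\overline{f^{(2)}(\omega)})\neq0$ has the unique solution \eqref{result}. Your sign bookkeeping checks out ($\alpha_1=(g_\Im f^{(2)}_\Re-g_\Re f^{(2)}_\Im)/D=-{\rm Im}(f^{(2)}\overline{g})/D$ with $D=-{\rm Im}(f^{(1)}\overline{f^{(2)}})$, and similarly for $\alpha_2$), so nothing further is needed.
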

\begin{proof}
Since $\alpha_1$, $\alpha_2$ are real, the real and imaginary part of $t_{(\alpha_1,\alpha_2)}(\omega)$ gives a 2-dimensional linear problem in $\alpha_1$ and $\alpha_2$. Solving it yields \eqref{result}.
\end{proof}
Define  for $\mathcal{C}$ the partition
\begin{equation}\label{2parti}
 \mathcal{C}^i=\{\omega\in \mathcal{C}: {\rm Im}(f^{(1)}(\omega)\overline {f^{(2)}(\omega)})=0\},\quad
 \mathcal{C}^r= \mathcal{C} \setminus \mathcal{C}^i.
\end{equation}

\begin{thm}\label{maint} 
Let $T$ be defined as the closure of \eqref{tmulthat} with $n=2$ and assume that $\Omega$ is a closed set satisfying $W(A_1,A_2)\subset \Omega\subset \overline{W(A_1)}\times \overline{W(A_2)}$. Denote by 
$W_\Omega(T)$, $W_{\parti\Omega}(T)$ and $W_{\partial\Omega}(T)$, sets given by \eqref{1encg}. Further, denote by $C$ the set where $f^{(1)},f^{(2)},g$ are continuous and  let $H$ be the set where those functions are holomorphic and linearly independent. Then
\leavevmode
\begin{itemize}
\item[{\rm (i)}] $W_{\parti\Omega}(T)\cap \mathcal{C}^i=W_{\Omega}(T)\cap \mathcal{C}^i$,
\item[{\rm (ii)}] $ W_{\partial\Omega}(T)\cap \mathcal{C}^r\cap C\supset \partial W_{\Omega}(T)\cap \mathcal{C}^r\cap C$,
\item[{\rm (iii)}] $ W_{\partial\Omega}(T)\cap \mathcal{C}^r\cap H = \partial W_{\Omega}(T)\cap \mathcal{C}^r\cap H$.
\end{itemize}
\end{thm}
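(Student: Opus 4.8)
The plan is to treat the three items separately, since each corresponds to a different regime of $\omega$ and relies on a different earlier result. For \textbf{(i)}, I would argue purely set-theoretically using Proposition \ref{det0c}. For $\omega\in\mathcal{C}^i$ we have ${\rm Im}(f^{(1)}(\omega)\overline{f^{(2)}(\omega)})=0$, so Proposition \ref{det0c} applies: $\omega\in W_\Omega(T)$ iff the degenerate system \eqref{21ran} has a solution in $\parti\Omega$. But since $\parti\Omega\subset\Omega$, the same statement with $\parti\Omega$ in place of $\Omega$ (apply Proposition \ref{det0c} to the closed set $\parti\Omega$, noting $W(A_1,A_2)\subset\parti\Omega$ need not hold, so one must instead argue directly from the definition \eqref{1encg}) gives $\omega\in W_{\parti\Omega}(T)$ iff \eqref{21ran} has a solution in $\parti(\parti\Omega)$; to avoid that subtlety I would simply observe that $W_{\parti\Omega}(T)\cap\mathcal{C}^i\subset W_\Omega(T)\cap\mathcal{C}^i$ is trivial from $\parti\Omega\subset\Omega$, and the reverse inclusion is exactly the content of Proposition \ref{det0c} together with the fact that a solution line meeting $\Omega$ meets $\parti\Omega$ (Lemma \ref{genbound}).

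For \textbf{(ii)} and \textbf{(iii)}, I would work on $\mathcal{C}^r$, where ${\rm Im}(f^{(1)}(\omega)\overline{f^{(2)}(\omega)})\neq0$ and Proposition \ref{detc} gives an explicit formula: $\omega\in W_\Omega(T)$ iff the point
\[
	\Phi(\omega):=\left(\frac{{\rm Im}(f^{(2)}(\omega)\overline{g(\omega)})}{{\rm Im}(f^{(1)}(\omega)\overline{f^{(2)}(\omega)})},\frac{{\rm Im}(\overline{f^{(1)}(\omega)}g(\omega))}{{\rm Im}(f^{(1)}(\omega)\overline{f^{(2)}(\omega)})}\right)\in\Omega.
\]
So on $\mathcal{C}^r$, $W_\Omega(T)=\Phi^{-1}(\Omega)$ and $W_{\partial\Omega}(T)=\Phi^{-1}(\partial\Omega)$. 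On $\mathcal{C}^r\cap C$ the map $\Phi$ is continuous, hence $\Phi^{-1}(\Omega^\circ)$ is open and contained in $W_\Omega(T)$, which forces $\partial W_\Omega(T)\cap\mathcal{C}^r\cap C$ to consist only of points whose image lies in $\partial\Omega$ (any point mapping into the interior of $\Omega$ has a whole neighbourhood mapping into $\Omega$, hence is interior to $W_\Omega(T)$ relative to $\mathcal{C}^r\cap C$). This gives (ii). For (iii), on $\mathcal{C}^r\cap H$ the components of $\Phi$ are real-analytic (real and imaginary parts of holomorphic ratios, with nonvanishing denominator), so $\Phi$ is an open map away from its critical set, and the linear independence hypothesis can be used to rule out $\Phi$ being locally constant; combined with the continuity argument of (ii) one gets the reverse inclusion $\partial W_\Omega(T)\cap\mathcal{C}^r\cap H\supset W_{\partial\Omega}(T)\cap\mathcal{C}^r\cap H$, i.e.\ a point mapping to $\partial\Omega$ is a boundary point of $W_\Omega(T)$ because nearby points map to both sides of $\partial\Omega$.

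The main obstacle I expect is the reverse inclusion in \textbf{(iii)}: showing that $\Phi(\omega)\in\partial\Omega$ actually implies $\omega\in\partial W_\Omega(T)$, rather than $\omega$ being an isolated or lower-dimensional feature. One needs that $\Phi$, restricted to $\mathcal{C}^r\cap H$, does not collapse a neighbourhood of $\omega$ onto a single point or a curve lying inside $\partial\Omega$; this is where holomorphicity and linear independence of $f^{(1)},f^{(2)},g$ must be invoked — essentially, if $\Phi$ were constant on an open set then $t_{\ve\alpha}$ would vanish identically there for a fixed $\ve\alpha$, contradicting linear independence as in the proof of Lemma \ref{2lemwta}. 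One must also handle the critical points of $\Phi$ (where the Jacobian degenerates) and points on $\partial\mathcal{C}^r$ carefully; I would restrict attention to the open set $\mathcal{C}^r\cap H$ and note that the critical set is nowhere dense there, so the open-mapping conclusion holds on a dense open subset and then propagate it to all of $\mathcal{C}^r\cap H$ by a limiting argument together with the already-established inclusion (ii). The routine direction — that $\Phi^{-1}(\partial\Omega)\subset\Phi^{-1}(\Omega)=W_\Omega(T)$ and that boundary points of $\Phi^{-1}(\Omega)$ map into $\partial\Omega$ — follows immediately from continuity and needs no further comment.
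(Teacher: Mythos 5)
Your treatments of (i) and (ii) are fine and essentially coincide with the paper's: (i) is immediate from Proposition \ref{det0c} plus the trivial inclusion $W_{\parti\Omega}(T)\subset W_\Omega(T)$, and (ii) follows from continuity of the unique solution $\Phi(\omega)$ of the non-degenerate linear system at the single point $\omega$ (your observation that a point with $\Phi(\omega)$ in the interior of $\Omega$, or in the open complement of $\Omega$, cannot be a boundary point of $W_\Omega(T)$ is a clean way to phrase it).

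The gap is in the reverse inclusion of (iii), and it sits exactly where you predicted. You propose to show $\Phi(\omega)\in\partial\Omega\Rightarrow\omega\in\partial W_\Omega(T)$ by arguing that $\Phi$ is an open map away from its critical set and then ``propagating'' to critical points by a limiting argument together with (ii). This does not close: a non-constant real-analytic map of $\R^2$ need not be open at a critical point, and it is precisely there that $\Phi$ could fold an entire neighbourhood of $\omega$ into $\Omega$ while the image merely touches $\partial\Omega$ at $\Phi(\omega)$ --- in which case $\omega$ would be an \emph{interior} point of $W_\Omega(T)$ and the claimed equality would fail at $\omega$. Openness at nearby regular points $\omega_k\to\omega$ does not help, because $\Phi(\omega_k)$ may lie in the interior of $\Omega$, and then the open images $\Phi(B(\omega_k,\delta))$ tell you nothing about points outside $\Omega$; nor does (ii), which only gives the opposite inclusion. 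What is needed is that the image of \emph{every} neighbourhood of $\omega$ contains a full neighbourhood of $\Phi(\omega)$, i.e.\ local surjectivity of $\Phi$ at $\omega$ including at degenerate points.

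The paper obtains exactly this, but by working in the $\omega$-variable rather than analyzing $\Phi$ as a map: for any $\ve{\alpha}'$ close to $\ve{\alpha}=\Phi(\omega)$, the function $t_{\ve{\alpha}'}=t_{\ve{\alpha}}+(\alpha_1'-\alpha_1)f^{(1)}+(\alpha_2'-\alpha_2)f^{(2)}$ is a small holomorphic perturbation of $t_{\ve{\alpha}}$, which by linear independence is $\not\equiv0$ and hence has an isolated zero at $\omega$; Rouch\'e then produces a zero $\omega'\in B(\omega,r)$ of $t_{\ve{\alpha}'}$, and since $B(\omega,r)\subset\mathcal{C}^r$ the solution at $\omega'$ is unique, so $\Phi(\omega')=\ve{\alpha}'$. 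Taking $\ve{\alpha}'\in\Omega$ and $\ve{\alpha}'\notin\Omega$ (both exist arbitrarily close to a boundary point of $\Omega$) yields points of $B(\omega,r)$ inside and outside $W_\Omega(T)$, hence $\omega\in\partial W_\Omega(T)$. I would recommend replacing your open-mapping/critical-set argument by this zero-perturbation argument; it uses holomorphy and linear independence in exactly the places you identified, but it is insensitive to the degeneracies of the Jacobian of $\Phi$.
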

\begin{proof}
{\rm (i)}
Assume $\omega\in W_\Omega(T)\cap C^i$, then from Proposition \ref{det0c} it follows that $\omega\in W_{\parti \Omega}(T)\cap C^i$, and the converse is trivial.

 {\rm (ii)} 
Let $\omega\in\partial W_{\Omega}(T)\cap\mathcal{C}^r\cap C$,  due to Proposition \ref{detc} there is a unique pair $(\alpha_1,\alpha_2)\in \R^2$ such that $f^{(1)}(\omega)\alpha_1+f^{(2)}(\omega)\alpha_2+g(\omega)=0$. Then due to continuity of the functions $f^{(1)},f^{(2)},g$ and from continuity of the solution of a non-degenerate matrix equation it follows that for each $\omega'$ in some small open ball around $\omega$ there is a (by Proposition \ref{detc}) unique solution $(\alpha_1',\alpha_2')$ to $f^{(1)}(\omega')\alpha_1'+f^{(2)}(\omega')\alpha_2'+g(\omega')=0$, that is close to $(\alpha_1,\alpha_2)$. But since $\omega\in\partial W_{\Omega}(T)$ this implies that  for each open ball at least one of the solutions is in $\Omega$ and at least one in not in $\Omega$. Hence, for some $\omega'$ close to $\omega$  the unique solution is some $(\alpha_1',\alpha_2')\in\partial\Omega$, and since this holds for all balls around $\omega$, it follows that $\omega\in W_{\partial\Omega}(T)\cap \mathcal{C}^r\cap C$. 

{\rm (iii)}
Assume $\omega\in  W_{\partial\Omega}(T)\cap \mathcal{C}^r\cap H$, then there are some $(\alpha_1,\alpha_2)\in\partial\Omega$  such that $t_{(\alpha_1,\alpha_2)}(\omega)=f^{(1)}(\omega)\alpha_1+f^{(2)}(\omega)\alpha_2+g(\omega)=0$. The zeros of a holomorphic function are continuous in holomorphic perturbations. Hence, it follows that for each $r>0$, there is some $r'>0$ such that $\sqrt{(\alpha_1-\alpha_1')^2+(\alpha_2-\alpha_2')^2}<r'$ implies that $t_{(\alpha_1',\alpha_2')}$ has a zero in $B(\omega,r)$. Where  $B(\omega,r)$ is the open ball with centrum $\omega$ and radius $r$. Since $\mathcal{C}^r$ is open,  for $r>0$ small enough $B(\omega,r)\subset\doma^{r}$. This means that $(\alpha_1,\alpha_2)\in\partial\Omega$ yields $\omega\in \partial W_{\Omega}(T)$. Hence, $ W_{\partial\Omega}(T)\cap \mathcal{C}^r\cap H \subset \partial W_{\Omega}(T)\cap \mathcal{C}^r\cap H$, the converse statement  follows from  {\rm (ii)}.
\end{proof}

If $f^{(1)},f^{(2)},g$ are holomorphic in $\mathcal{C}$ then Theorem \ref{maint} {\rm (ii)} is a consequence of Theorem \ref{maint} {\rm (iii)}. However, 
equality does not hold in general in Theorem \ref{maint} {\rm (ii)}.
\begin{ex} 
 Let $\Omega= W(A_1)\times W(A_2)=[0,1]^2$ and define the operator function
\[
	T(\omega):=A_1+A_2(\omega+2i)+g(\omega),\quad 
	g(\omega):=\Bigg\{
	\begin{array}{l l}
	0 &\text{for } |\omega|<1 \\
	\omega^2(|\omega|-1)&\text{for } 1\leq|\omega|<2 \\
	\omega^2& \text{for } |\omega|\geq2
	\end{array}.
\]
The functions $f^{(1)},f^{(2)},g$ are continuous and $(0,0)\in \partial\Omega\cap C^r$ but it follows from definition that $W_{\{(0,0)\}}(T)=\overline{B(0,1)}$, where $\overline{B(0,1)}$ is the closed unit disc. Hence, $W_{\partial\Omega}(T)\cap \mathcal{C}^r\cap C\neq \partial W_{\Omega}(T)\cap \mathcal{C}^r\cap C$ in this case. 
\end{ex}

\begin{rem}
An algorithm similar to  \cite[Proposition 2.17]{ATEN} can be used to obtain  $\overline{ W_{\Omega}(T)\cap \mathcal{C}^r}$ from its boundary. However, the algorithm does not necessary converge in a finite number of steps since there can be an infinite number of components. Figure \ref{2fig:sinclust} shows  this behavior in a simple case.
\end{rem}

\subsection{Functions with more than two operator coefficients} 
Consider $T$ given as the closure of \eqref{tmulthat} with $n>2$. A major difference in this case is that $t_{\ve{\alpha}}(\omega)=0$ does not have a unique solution $\ve{\alpha}\in\R^n$ for any $\omega\in\mathcal{C}$. Hence,  whether $\omega\in W_\Omega(T)$ or $\omega\notin W_\Omega(T)$ is not determined by checking if a unique point $\ve{\alpha}\in\mathcal{C}$ is in $\Omega$. The solvability and the solutions  of  $t_{\ve{\alpha}}(\omega)=0$ trivially coincides with the solvability and the solutions of
\[
	\begin{bmatrix}
	f^{(1)}_\Re(\omega) & \dots &f^{(n)}_\Re(\omega)  \\
	f^{(1)}_\Im(\omega)  & \dots &f^{(n)}_\Im(\omega)  
	\end{bmatrix}
	\begin{bmatrix}
	\alpha_1 \\
	\vdots\\
	\alpha_n
	\end{bmatrix}=
	-
	\begin{bmatrix}
	g_\Re(\omega)  \\
	g_\Im(\omega) 
	\end{bmatrix}.
\]
We will denote this system by the  matrix notation:
\begin{equation}\label{multeq}
	F(\omega)\ve{\alpha}=G(\omega).
\end{equation}

\begin{defn}\label{mskel}
Let $\Omega\subset \R^n$ be a closed possibly infinite set. The $m$-skeleton, $\Omega_m$ for $m=0,\hdots,n-1$ is then defined iteratively as follows:

Let $J_{\Omega}\subset\{1,\hdots,n\}$ be defined as in \eqref{4infcond}.

If $J_{\Omega}\neq\emptyset$ define
\[
	\Omega_m:=\Omega^{+}_m\cup\Omega^{-}_m,\quad \quad \Omega^{\pm}:=\{\alpha\in \Omega:\pm\alpha_{\min J}\geq0\}.
\]

If $J_{\Omega}=\emptyset$ define $\Omega_0=\partial \Omega$ and for $m>0$ let $\Omega_m$ denote the set of points $\ve{\alpha}\in \Omega$ such that there is a $m$-dimensional subspace $P$ satisfying $\Omega\cap P=\{\ve{\alpha}\}$. For convenience we define $\Omega_{-1}:=\Omega$. 
\end{defn}

Definition \ref{mskel} is a generalization of \eqref{boundinf}, note that $\parti\Omega=\Omega_0$. Furthermore, it follows that 
\[
	\Omega=\Omega_{-1}\subset\Omega_0\subset\Omega_1\subset\hdots\subset\Omega_{n-1}.
\]

\begin{ex}\label{4skelex}
If $\Omega=\overline{B(0,r)}$, the open ball around $0$ with radius $r$, then $\Omega_m=\partial B(0,r)$ for $m=0,\dots,n-1$. If $\Omega=[0,r]^n$, then 
\[
\Omega_m:=\{\ve{\alpha}\in\Omega: \sum_{\{j\in\{1,\hdots, n\}:\alpha_j\in\{0,r\}\}}1\geq m+1\},\quad  m=0,\hdots,n-1.
\]
\end{ex}

\begin{lem}\label{lemfin1}
Let $\Omega\subset \R^n$ be a closed set and $P$ a $m$-dimensional subspace, where $m\in \{0,\hdots,n-1\}$. If $\Omega\cap P\neq \emptyset$ then $\Omega_{m-1}\cap P\neq\emptyset$.
\end{lem}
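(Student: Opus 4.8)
The statement is a downward induction on the skeleton index $m$, and the natural strategy is to peel one dimension at a time. First I would dispose of the degenerate branch: if $J_\Omega\neq\emptyset$, then $\Omega_{m}$ and $\Omega_{m-1}$ are defined by splitting $\Omega$ into the two halves $\Omega^{+}$ and $\Omega^{-}$ along the coordinate $\alpha_{\min J}$, and the claim follows by applying the (inductive) statement to whichever of $\Omega^{\pm}$ meets $P$; here one must note that $\Omega^{+}\cup\Omega^{-}=\Omega$, so $P$ meets at least one of them, and that the skeleton operations on $\Omega^{\pm}$ land inside $\Omega_{m-1}$ by Definition \ref{mskel}. So the essential case is $J_\Omega=\emptyset$, where $\Omega_{m}$ consists of those $\ve{\alpha}\in\Omega$ sitting alone on some $m$-dimensional subspace, and similarly for $\Omega_{m-1}$ with $(m-1)$-dimensional subspaces, while $\Omega_{0}=\partial\Omega$.

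\textbf{The heart of the argument.} Fix the $m$-dimensional subspace $P$ with $\Omega\cap P\neq\emptyset$. I want to produce a point of $\Omega$ lying alone on some $(m-1)$-dimensional subspace. The idea is to look at $\Omega\cap P$ as a closed subset of the Euclidean space $P\cong\R^{m}$ and pick an extreme point of it in a suitable direction — concretely, choose a linear functional $\ell$ on $P$ and let $\ve{\alpha}_0$ be a point of $\Omega\cap P$ that maximizes $\ell$ (or, if $\Omega\cap P$ is unbounded in every direction, one has to be more careful — but since $J_\Omega=\emptyset$ guarantees that along each coordinate $\Omega$ is bounded on one side, $\Omega\cap P$ cannot contain a full line through the origin's translate unless that line is ``coordinate-balanced,'' which is exactly what $J_\Omega=\emptyset$ forbids; so a generic $\ell$ has a maximizer). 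Then the supporting hyperplane $\{\ell=\ell(\ve{\alpha}_0)\}$ inside $P$ is an $(m-1)$-dimensional affine subspace of $P$ meeting $\Omega\cap P$; if for generic $\ell$ that intersection is the single point $\ve{\alpha}_0$, we translate this affine subspace to a genuine $(m-1)$-dimensional linear subspace $Q$ through $\ve{\alpha}_0$ — but $\Omega\cap Q$ need not be a singleton globally, only $\Omega\cap P\cap Q$ is. The fix is to perturb: replace $Q$ by a nearby $(m-1)$-dimensional subspace through $\ve{\alpha}_0$ that is in ``general position'' with respect to $\Omega$, using that $\Omega$ is closed and $\ve{\alpha}_0$ is isolated in $\Omega\cap P$ within the direction transverse to $Q$ inside $P$. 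For $m=1$ this reduces to: $\Omega\cap P$ is a closed subset of a line meeting $\Omega$, and an endpoint of a connected piece — or any point isolated from one side — lies in $\partial\Omega=\Omega_{0}$; one should check directly that $\Omega\cap P\neq\emptyset$ forces $\partial\Omega\cap P\neq\emptyset$, which is essentially the content of Lemma \ref{genbound} for lines.

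\textbf{Main obstacle.} The delicate point is the non-compact case: when $\Omega\cap P$ is unbounded, a linear functional on $P$ need not attain its supremum, so one cannot simply take an extreme point. The hypothesis $J_\Omega=\emptyset$ is precisely what rules out the bad scenario — it says no coordinate axis direction is ``two-sided infinite'' in $\Omega$ — but translating this into a statement about arbitrary subspaces $P$ (``$\Omega\cap P$ contains no line'' or at least ``$\Omega\cap P$ has a genuine face'') requires some care, because $P$ is not coordinate-aligned. I expect the cleanest route is to argue that $\Omega$ itself, having $J_\Omega=\emptyset$, is contained in a ``generalized quadrant'' up to reflections, hence its recession cone is pointed enough that any affine slice $\Omega\cap P$ still has extreme points in a cofinal set of directions; then the maximizer construction goes through. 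Once a single point $\ve{\alpha}_0$ of $\Omega\cap P$ isolated in some $(m-1)$-plane is found, membership $\ve{\alpha}_0\in\Omega_{m-1}$ is immediate from Definition \ref{mskel}, and the induction closes. The reflection bookkeeping for the $J_\Omega\neq\emptyset$ case, while routine, is the other place where one must be attentive to keep everything inside $\Omega_{m-1}$ rather than merely inside $\Omega_{m-1}^{\pm}$.
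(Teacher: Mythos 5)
Your overall architecture is the same as the paper's: peel off the $J_\Omega\neq\emptyset$ branch by splitting into $\Omega^{\pm}$, reduce to a piece contained (after reflections $l_j$) in a translated orthant, and then find a point of that piece lying alone on an $(m-1)$-dimensional affine plane inside $P$. But the crucial step --- producing a \emph{single} touching point --- is exactly where your argument has a gap, in two places. First, attainment: a linear functional that is merely bounded above on a closed unbounded set need not attain its supremum (consider $\{(x,y):x\ge 0,\,y\ge 0,\,xy\ge 1\}$ and $\ell=-y$), so ``$\Omega\cap P$ contains no full line'' is not the right reason a maximizer exists; what you actually need is that for $\ell$ strictly inside the dual cone of the orthant the superlevel sets $\{\ell\ge c\}\cap\Omega^{(i)}$ are compact. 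Second, and more seriously, uniqueness: the supporting hyperplane $\{\ell=\max\}$ can meet $\Omega\cap P$ in a flat face or a worse closed set, and since $\Omega$ is \emph{not} assumed convex, your fallback of ``perturbing $Q$ to general position'' is not a valid argument --- tilting a supporting line about a boundary point of a non-convex closed set can re-intersect the set in a continuum no matter how small the tilt. Your parenthetical ``if for generic $\ell$ that intersection is the single point'' is in fact true, but proving it requires the a.e.\ differentiability of the support function of ${\rm Co}(\Omega^{(i)}\cap P)$ together with the fact that exposed points of the convex hull of a compact set belong to the set; none of this is in your write-up, and it is the entire content of the lemma.

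The paper avoids both difficulties at once by replacing the linear functional with the strictly concave function $\sum_j\log(1+l_j\alpha_j-\alpha_j')$: its level sets $Q_k=\{\prod_j(1+l_j\alpha_j-\alpha_j')=k\}$ meet $\Omega^{(i)}$ in compact sets (each factor lies in $[1,k]$ there), so a smallest $k$ with $Q_k\cap\Omega^{(i)}\cap P\neq\emptyset$ exists, and strict convexity of the superlevel set forces the tangent plane $P'$ at the touching point to satisfy $\Omega^{(i)}\cap P\cap P'=\{\ve{\alpha}\}$ with no genericity or perturbation needed; then $P\cap P'$ has dimension at least $m-1$ and witnesses $\ve{\alpha}\in\Omega_{m-1}$. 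Two smaller remarks: your worry that ``$\Omega\cap Q$ need not be a singleton globally'' is a red herring --- take the witnessing $(m-1)$-plane \emph{inside} $P$ (as both the supporting hyperplane and the paper's $P\cap P'$ naturally are), and then $\Omega^{(i)}\cap Q=\Omega^{(i)}\cap P\cap Q$ automatically; and your base case $m=1$ should conclude $\parti\Omega\cap P\neq\emptyset$ (Lemma \ref{genbound}), not $\partial\Omega\cap P\neq\emptyset$, which can fail without the $J_\Omega$ splitting.
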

\begin{proof}
For $m=0$ this is direct from definition and for $m=1$ it follows from Lemma \ref{genbound}. Hence, for the rest of the proof, assume that $m\geq2$.

Let $J_{\Omega}\subset\{1,\hdots,n\}$ be defined as in \eqref{4infcond} and if $J_\Omega\neq\emptyset$ define
\[
	\Omega:=\Omega^{+}\cup\Omega^{-},\quad \quad \Omega^{\pm}:=\{\alpha\in \Omega:\pm\alpha_{\min J}\geq0\}.
\]
 By doing this iteratively we obtain some $n_2\leq 2^{n}$ and $\Omega^{(i)}$, $i=1,\hdots,n_2$ such that
 \[
 	\Omega=\bigcup_{i=1}^{n_2} \Omega^{(i)}, \quad \bigcup_{i=1}^{n_2}J_{\Omega^{(i)}}=\emptyset.
 \]
 Hence, there is some $i\in\{1,\hdots,n_2\}$ such that $P\cap\Omega^{(i)}\neq\emptyset$. Since $J_{\Omega^{(i)}}=\emptyset$ there is some $l=(l_1,\hdots,l_n)$, $l_j\in\{-1,1\}$ such that $\alpha_j':=
 \inf_{\ve{\alpha}\in\Omega^{(i)}} l_j\alpha_j>-\infty$.

Define for $k\in \R$ the $n-1$-dimensional  hypersurface
\[
	Q_k:=\left\{\ve{\alpha}\in\R^n:\prod_{j\in \{1,\hdots,n\}}(1+l_j\alpha_{j}-\alpha_{j}')=k\right\}. 
\]
It follows that for  $k<1$ then $Q_k\cap \Omega^{(i)}=\emptyset$ and for all $k$, the set $Q_k\cap \Omega^{(i)}$ is closed and bounded. This together with that $\Omega^{(i)}\cap P\neq\emptyset$ means that there is a $k$ such that $Q_k\cap\Omega^{(i)}\cap P\neq\emptyset$ and $Q_{k'}\cap\Omega^{(i)}\cap P=\emptyset$ for all $k'<k$. Let $P'$ denote the  plane that tangents $Q_k$ in some point $\ve{\alpha}\in Q_k\cap\Omega^{(i)}\cap P$. From definition the $P'$ of $Q_k$ for each point in $\Omega^{(i)}$ is unique which implies that $\Omega^{(i)}\cap P\cap P'=\{\ve{\alpha}\}$. Since $P\cap P'\neq\emptyset$ it follows that $P\cap P'$ is a subspace of dimension at least $m-1$. Hence, $\Omega^{(i)}\cap P\cap P'=\{\ve{\alpha}\}$ yields that $\ve{\alpha}\in \Omega^{(i)}_{m-1}$. The result then follows from that $\ve{\alpha}\in \Omega^{(i)}_{m-1}\cap P\subset\Omega_{m-1}\cap P$ due to Definition \ref{mskel}. 
\end{proof}

\begin{prop}\label{detcmult}
Let $T$ be defined as the closure of \eqref{tmulthat}, with $n>2$ and let $\Omega$ be a closed set satisfying $W(A_1,\hdots,A_n)\subset \Omega\subset \overline{W(A_1)}\times\hdots\times \overline{W(A_n)}$. Then $\omega$ belongs to the set $W_\Omega(T)$ defined in \eqref{2enmu} if and only if $r:=\Ran F(\omega)=\Ran [F(\omega),G(\omega)]$ and  $\Omega_{n-r-1}\cap P\neq \emptyset$, where $P\subset \R^n$ is the $n-r$-dimensional solution subspace to \eqref{multeq}.
\end{prop}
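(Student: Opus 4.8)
The plan is to translate membership in $W_\Omega(T)$ directly into a statement about the linear system \eqref{multeq} and then invoke Lemma \ref{lemfin1} to push a nonempty intersection down through the skeletons. By definition, $\omega\in W_\Omega(T)$ means there exists $\ve{\alpha}\in\Omega$ with $t_{\ve{\alpha}}(\omega)=0$, which (as noted in the excerpt) is equivalent to $F(\omega)\ve{\alpha}=G(\omega)$ having a solution in $\Omega$. The Rouché--Capelli theorem then gives that the system is solvable (in $\R^n$, ignoring $\Omega$) if and only if $r:=\Ran F(\omega)=\Ran[F(\omega),G(\omega)]$, and in that case the full solution set is an affine subspace of dimension $n-r$. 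Since $\Omega\subset\overline{W(A_1)}\times\cdots\times\overline{W(A_n)}$ and $W(\ve A)\subset\Omega$, and since $t_{\ve\alpha}(\omega)=0$ already forces $\ve\alpha$ into this product box once $\ve\alpha=\ip{\ve A u}{u}$, we need only worry about whether $\Omega$ meets this affine solution set.

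First I would dispose of the "only if" direction: if $\omega\in W_\Omega(T)$ then the system is solvable, so $r=\Ran F(\omega)=\Ran[F(\omega),G(\omega)]$ holds automatically, and the witnessing $\ve\alpha$ lies in $\Omega\cap P$ where $P$ is the $(n-r)$-dimensional solution set; hence $\Omega\cap P\neq\emptyset$. To upgrade this to $\Omega_{n-r-1}\cap P\neq\emptyset$ I would apply Lemma \ref{lemfin1} with $m=n-r$: that lemma says precisely that if a closed $\Omega$ meets an $m$-dimensional subspace then it meets $\Omega_{m-1}$ on that subspace. One subtlety: Lemma \ref{lemfin1} is stated for $m$-dimensional \emph{subspaces}, whereas $P$ is in general an affine subspace (a translate). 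I would handle this either by noting that the proof of Lemma \ref{lemfin1} only uses the affine structure (the tangent-hyperplane argument is translation-invariant), or by a harmless affine change of coordinates; this is the one place where I must be slightly careful rather than purely formal. The range condition also ensures $r\le n-1$ (since $F(\omega)$ has only two rows and $G(\omega)\neq 0$ is the generic case; if $G(\omega)=0$ one still has $r\le n-1$ because $n>2$ forces $F(\omega)$ to have nontrivial kernel), so $n-r-1\ge 0$ and the skeleton $\Omega_{n-r-1}$ is defined.

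For the "if" direction: assume $r=\Ran F(\omega)=\Ran[F(\omega),G(\omega)]$ and $\Omega_{n-r-1}\cap P\neq\emptyset$. Since $\Omega_{n-r-1}\subset\Omega$ by the nesting $\Omega=\Omega_{-1}\subset\Omega_0\subset\cdots\subset\Omega_{n-1}$, any point $\ve\alpha\in\Omega_{n-r-1}\cap P$ lies in $\Omega$ and solves $F(\omega)\ve\alpha=G(\omega)$, i.e.\ $t_{\ve\alpha}(\omega)=0$; hence $\omega\in W_\Omega(T)$. This direction is immediate.

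**The main obstacle** is the passage from $\Omega\cap P\neq\emptyset$ to $\Omega_{n-r-1}\cap P\neq\emptyset$, i.e.\ making sure Lemma \ref{lemfin1} applies with the correct index and to an affine (not linear) solution set, and confirming that the definition of $\Omega_{n-r-1}$ via Definition \ref{mskel} matches the $(m-1)$-skeleton produced by iterating Lemma \ref{lemfin1}. Everything else is bookkeeping with the Rouché--Capelli dimension count and the observation that $t_{\ve\alpha}(\omega)=0$ is literally the matrix equation \eqref{multeq}.
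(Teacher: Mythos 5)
Your proposal is correct and takes essentially the same route as the paper's proof: translate $\omega\in W_\Omega(T)$ into solvability of \eqref{multeq} in $\Omega$ (giving the rank condition and $\Omega\cap P\neq\emptyset$), apply Lemma \ref{lemfin1} with $m=n-r$ to descend to $\Omega_{n-r-1}\cap P\neq\emptyset$, and note the converse is immediate from $\Omega_{n-r-1}\subset\Omega$. Your observation that $P$ is in general an affine rather than linear subspace is a point the paper silently glosses over (it already treats arbitrary lines in Lemma \ref{genbound}, so ``subspace'' is used in the affine sense throughout), and your resolution of it is correct.
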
 
\begin{proof}
It follows that $\omega\in W_\Omega(T)$ if and only if \eqref{multeq} is solvable in $\Omega$ which means that  $\Ran F(\omega)=\Ran [F(\omega),G(\omega)]$, and  $ \Omega\cap P\neq\emptyset$. From Lemma \ref{lemfin1} and that the dimension of $P$ is $n-r$  the result follows.
\end{proof}

\begin{lem}\label{lemfin2} 
Let $\Omega\subset \R^n$ be a closed convex set and $P$ a $m$-dimensional subspace, where $0\leq m\leq n-1$. If $\partial\Omega\cap P\neq \Omega\setminus\partial\Omega\cap P= \emptyset$  then $\overline{\Omega_{m}}\cap P\neq\emptyset$.  \end{lem}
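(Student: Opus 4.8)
The plan is to run the degenerate-direction reduction of Lemma~\ref{lemfin1}, then to use the hypothesis $P\cap(\Omega\setminus\partial\Omega)=\emptyset$ to push $P$ onto a proper exposed face of $\Omega$, and finally to descend through the face lattice until $P$ meets a face of dimension at most $n-m-1$.

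First I would reduce to $J_\Omega=\emptyset$. If $J_\Omega\neq\emptyset$, split $\Omega=\Omega^{+}\cup\Omega^{-}$ with $\Omega^{\pm}=\{\alpha\in\Omega:\pm\alpha_{\min J}\geq0\}$ as in Definition~\ref{mskel}; each piece is closed and convex, and since the interior is monotone one has $\mathrm{int}\,\Omega^{\pm}\subseteq\mathrm{int}\,\Omega$, so whichever piece $\Omega'$ meets $P$ inherits both hypotheses ($P\cap\partial\Omega'\neq\emptyset$, and $P\cap\Omega'\subseteq P\cap\Omega\subseteq\partial\Omega$ is disjoint from $\mathrm{int}\,\Omega\supseteq\mathrm{int}\,\Omega'$, i.e. $P\cap(\Omega'\setminus\partial\Omega')=\emptyset$). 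Iterating finitely often and using $\Omega_m\supseteq\bigcup_i\Omega^{(i)}_m$ from Definition~\ref{mskel}, it suffices to treat $J_\Omega=\emptyset$; then $\Omega$, and hence every face of $\Omega$, contains no line and so possesses extreme points. I would also record the characterisation that, for such $\Omega$, a point $\mathbf q\in\Omega$ lies in $\Omega_m$ precisely when the smallest face of $\Omega$ through $\mathbf q$ has dimension at most $n-m-1$; in particular $\Omega_m$ is the union of the relative interiors of the faces of $\Omega$ of dimension at most $n-m-1$, and $\overline{\Omega_m}$ is the closure of that union. (If $\mathrm{int}\,\Omega=\emptyset$ the ambient dimension drops and the statement follows from the lower-dimensional case after replacing $P$ by $P\cap\mathrm{aff}(\Omega)$ and recomputing the skeleton index.)

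Next, since $P$ is an affine flat disjoint from the nonempty open convex set $\mathrm{int}\,\Omega$, the separation theorem---together with the fact that a hyperplane lying weakly on one side of a flat must contain it---produces a hyperplane $H\supseteq P$ supporting $\Omega$, so $P\cap\Omega\subseteq F:=\Omega\cap H$, a proper face. Let $G$ be the smallest face of $\Omega$ containing $P\cap\Omega$; then $G\subseteq F$, $P\cap\Omega=P\cap G$, and $P\cap G$ meets $\mathrm{relint}(G)$. Now I would descend: if $\dim G\leq n-m-1$ we are done, since $G\subseteq\Omega_m$ and $P\cap G\neq\emptyset$. Otherwise, if $P\cap\Omega$ is a single point $\mathbf q$ then $P$ itself is an $m$-dimensional flat with $\Omega\cap P=\{\mathbf q\}$, whence $\mathbf q\in\Omega_m$ (and $\mathbf q\in\Omega_0=\partial\Omega$ when $m=0$); while if $\dim(P\cap G)\geq1$ then $P\cap G$ is a closed convex line-free set meeting $\mathrm{relint}(G)$, and it must meet $\partial_{\mathrm{rel}}(G)$: being a nonempty intersection of closed sets it is relatively closed in its affine hull, so if it avoided $\partial_{\mathrm{rel}}(G)$ altogether it would also be relatively open there, forcing that affine hull (of dimension $\geq1$) to lie inside the line-free set $G$, which is impossible. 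Hence $P$ meets the relative interior of a strictly lower-dimensional face $G'$ of $\Omega$; replacing $G$ by $G'$ and repeating, the face dimension drops each time, so after finitely many steps $P$ meets a face of dimension at most $n-m-1$, which finishes the proof (using $\overline{\Omega_m}$ rather than $\Omega_m$ at the final step if necessary).

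The main obstacle is the bookkeeping in this descent, in particular the borderline case where it terminates at a point $\mathbf q$ of $P$ lying in the relative interior of a face of dimension exactly $n-m$ that $P$ meets only at $\mathbf q$: then $\mathbf q$ itself need not be in $\Omega_m$, and one must instead produce a nearby point of $P$ in $\overline{\Omega_m}$---either by choosing the descent more carefully (applying Lemma~\ref{lemfin1} inside the current face, or using the density of the exposed points in the extreme points of a line-free convex set) or by exploiting that $\overline{\Omega_m}$ absorbs exactly such limits. Verifying the face-dimension characterisation of $\Omega_m$ from the first step---specifically the direction that, for a point in the relative interior of a sufficiently low-dimensional face, exhibits an $m$-dimensional flat meeting $\Omega$ only there---is the other place demanding genuine work, again because $\Omega$ is merely closed and convex rather than polyhedral.
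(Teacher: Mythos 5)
Your reduction to $J_\Omega=\emptyset$ and the separation step are sound, but the argument hinges on the claim that $\Omega_m$ is exactly the union of the relative interiors of the faces of $\Omega$ of dimension at most $n-m-1$, and that claim is false in the direction you need. Take $\Omega\subset\R^2$ to be the convex hull of two disjoint closed discs (a ``stadium'') and $m=1$: each endpoint $q$ of a straight edge is an extreme point, hence a face of dimension $0=n-m-1$, yet the unique supporting line at $q$ contains the entire straight edge and every other line through $q$ crosses ${\rm int}\,\Omega$, so no line meets $\Omega$ only at $q$ and $q\notin\Omega_1$. Only the weaker statement $q\in\overline{\Omega_1}$ survives, and in general the assertion that relative interiors of low-dimensional (possibly non-exposed) faces lie in $\overline{\Omega_m}$ is an Asplund/Straszewicz-type density theorem; for merely closed convex sets this is exactly the nontrivial content of the lemma, and you invoke it without proof. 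The second gap is the one you flag yourself: the face-lattice descent terminates either when $\dim G\leq n-m-1$ or when $P\cap G$ is a single point, and in the latter case at any stage after the first one has $P\cap\Omega\supsetneq P\cap G$, so the definition of $\Omega_m$ no longer applies to that point; none of the proposed remedies is carried out. As written the proof therefore does not close.

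For what it is worth, the paper avoids the face lattice entirely. After the same reduction and choice of coordinates with $P=\{\ve{\alpha}:\alpha_{m+1}=\dots=\alpha_n=0\}$ and $\Omega$ on the side $\alpha_{m+1}\geq0$, it works inside the $(m+1)$-dimensional slice $R=\{\ve{\alpha}:\alpha_{m+2}=\dots=\alpha_n=0\}$ and slides a strictly convex cap $\Gamma_k$ of height at most $\epsilon$ above $P$ into $\Omega\cap R$ until first contact; strict convexity forces the tangent $m$-flat at the contact point to meet $\Omega\cap R$ --- and hence $\Omega$, since that flat lies in $R$ --- in exactly one point. This manufactures a point of $\Omega_m$ within distance $\epsilon$ of $P$ for every $\epsilon>0$, and letting $\epsilon\to0$ yields a point of $\overline{\Omega_m}\cap P$. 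That quantitative exposed-point construction is precisely the ingredient your descent is missing, and substituting it for the face-lattice argument would repair the proof.
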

\begin{proof}

If $m=0$ the result follows from definition. Assume that $m\geq1$ and define $\Omega^{(i)}$ as in Lemma \ref{lemfin1}, which then is convex and $\partial\Omega^{(i)}\cap P\neq \Omega^{(i)}\setminus\partial\Omega^{(i)}\cap P= \emptyset$.

For simplicity assume that  $P= \{\ve{\alpha }\in\R^n:\alpha_{m+1}=\hdots =\alpha_{n} =0\}$, $\ve{\alpha}\in \Omega^{(i)}\Rightarrow \alpha_{m+1}\geq0$. Since, $\Omega^{(i)}$ is convex the problem can be written in this form in some coordinate system. Define the $m+1$-dimensional subspace $R:=\{\ve{\alpha}\in\R^n: \alpha_{m+2}=\hdots=\alpha_n=0\}$. For simplicity the notation $\ve{\alpha}=(\alpha_1,\hdots,\alpha_{m+1})$ is used since the omitted values are always $0$. Since $P\subset R$ it follows that $\Omega':=\Omega^{(i)}\cap R\neq\emptyset$.

 Let $\epsilon>0$ be some small number, then since no line is a subset of $\Omega'$ we can  assume that $(0,0,\hdots,0,\alpha_{m+1})\notin \Omega'$ for  $\alpha_{m+1}<\epsilon$. Additionally, by scaling (if necessary) we can assume  that $\ve{\alpha}'\in \Omega'\cap P$ with $\alpha_1'=\hdots=\alpha_m'=1$ and $\alpha_{m+1}'=0$. Define for $(x_1,\hdots,x_{m})\in [0,1]^m$ and $k>0$ the continuous function 
\[
	y_{k}(x_1,\hdots, x_m):=\max\left(\epsilon\left(1-2\sqrt{1-\sum_{j=1}^{m}\dfrac{(x_j-k)^2}{mk^2}}\right),0\right).
\]
Define the $m$-dimensional set
\[
	\Gamma_k:=\{(\alpha_1,\hdots,\alpha_m,y_k(\alpha_1,\hdots,\alpha_m)):(\alpha_1,\hdots,\alpha_{m})\in [0,1]^m\}.
\]

For $k$ close to zero it follows that  $\overline{(\Gamma_k\setminus P)}\cap\Omega'=\emptyset$ from closedness of $\Omega'$. Let $k>0$ be the smallest value such that there is some point $\ve{\alpha}\in \overline{(\Gamma_k\setminus P)}\cap\Omega'$. Such $k$ exists and is at most $4+\sqrt{12}$ since $\ve{\alpha}'\in\overline{(\Gamma_{4+\sqrt{12}}\setminus P)}\cap\Omega^{(i)}$.
Then there is a subspace $P'$ of dimension $m$ that tangents $\Gamma_k$ at $\ve{\alpha}$ that intersects only this point in $\Omega'$ since $\Omega' $ is convex and $\Gamma_k$ is strictly convex on $\overline{(\Gamma_k\setminus P)}$. Since $P'\subset R$ it follows that $P'$ intersects only one point in $\Omega^{(i)}_m$.

Hence, $\ve{\alpha}\in\Omega^{(i)}_{m}\subset\Omega_m$ and since ${\rm dist}(P,\ve{\alpha})\leq\epsilon$ where $\epsilon$ can be chosen arbitrary small we can choose $\ve{\alpha}$  arbitrary close to $P$. Finally, since each such $\ve{\alpha}$ is in a  bounded domain the limit of this construction as $\epsilon\rightarrow 0$ exists and is a point in $P$.
\end{proof}

Define for $T$ the partition of $\mathcal{C}$ as 
\begin{equation}\label{2partigen}
 \mathcal{C}^i=\{\omega\in \mathcal{C}: \Ran F(\omega)<2\},\quad
 \mathcal{C}^r= \mathcal{C} \setminus \mathcal{C}^i.
\end{equation}
This definition  generalizes  \eqref{2parti} to the case $n\geq2$. 

\begin{thm}\label{maintmult}
Let $T$ denote the closure of the operator function \eqref{tmulthat}, let sets of the type $W_X(T)$ be defined by \eqref{1encg} and let $C$ denote the set where $f^{(j)},g$ are continuous.
Let $\Omega$ be a closed set satisfying $W(A_1,\hdots,A_n)\subset \Omega\subset \overline{W(A_1)}\times\hdots\times \overline{W(A_n)}$ and $\Omega_m$ be defined by Definition \ref{mskel}. Then
\begin{itemize}
\item [{\rm (i)}] $W_{\Omega_{n-3}}(T)=W_\Omega(T)$,
\item[{\rm (ii)}] $W_{\Omega_{n-2}}(T)\cap \mathcal{C}^i=W_\Omega(T)\cap \mathcal{C}^i$,
\item[{\rm (iii)}] $W_{\overline{{\rm Co}(\Omega)_{n-2}}}(T)\cap \mathcal{C}^r\cap C\supset \partial W_{{\rm Co}(\Omega)}(T)\cap W_{{\rm Co}(\Omega)}(T)\cap \mathcal{C}^r\cap C$.
\end{itemize}
\end{thm}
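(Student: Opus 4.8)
The plan is to derive all three assertions from Proposition \ref{detcmult} — the rank/skeleton characterisation of $W_\Omega(T)$ — together with Lemmas \ref{lemfin1} and \ref{lemfin2} and two elementary monotonicity facts. First, $X\subset X'$ gives $W_X(T)\subset W_{X'}(T)$ straight from \eqref{1encg}. Second, the skeletons of Definition \ref{mskel} are nested, $\Omega\supset\Omega_0\supset\Omega_1\supset\hdots\supset\Omega_{n-1}$: if $\ve\alpha\in\Omega_m$ is isolated in $\Omega$ by an $m$-dimensional affine subspace $P$, then any $(m-1)$-dimensional $P'\subset P$ through $\ve\alpha$ isolates it too, so $\ve\alpha\in\Omega_{m-1}$ (and this survives the $\Omega^{\pm}$-splitting used when $J_\Omega\neq\emptyset$). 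Throughout I write $F(\omega),G(\omega)$ as in \eqref{multeq}, put $r:=\Ran F(\omega)$, and when $\Ran F(\omega)=\Ran [F(\omega),G(\omega)]$ I let $P=P(\omega)$ be the $(n-r)$-dimensional affine solution set of \eqref{multeq}; note $r\leq2$ always and, by \eqref{2partigen}, $r<2$ precisely on $\mathcal{C}^i$.

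\textbf{Parts (i) and (ii).} In each part the ``easy'' inclusion $W_{\Omega_{n-3}}(T)\subset W_\Omega(T)$ (resp.\ $W_{\Omega_{n-2}}(T)\cap\mathcal{C}^i\subset W_\Omega(T)\cap\mathcal{C}^i$) is immediate from $\Omega_{n-3}\subset\Omega$ (resp.\ $\Omega_{n-2}\subset\Omega$) and the first monotonicity fact. For the reverse inclusion in (i) I would take $\omega\in W_\Omega(T)$; Proposition \ref{detcmult} then gives $\Ran F(\omega)=\Ran [F(\omega),G(\omega)]=r$ and $\Omega_{n-r-1}\cap P\neq\emptyset$. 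Since $r\leq2$, $n-r-1\geq n-3$, so the nesting yields $\Omega_{n-3}\supset\Omega_{n-r-1}$ and hence $\Omega_{n-3}\cap P\neq\emptyset$, i.e.\ $t_{\ve\alpha}(\omega)=0$ for some $\ve\alpha\in\Omega_{n-3}$ and $\omega\in W_{\Omega_{n-3}}(T)$. Part (ii) is the same argument restricted to $\mathcal{C}^i$, where $r\leq1$ forces $n-r-1\geq n-2$, so $\Omega_{n-2}\cap P\supset\Omega_{n-r-1}\cap P\neq\emptyset$. (The degenerate case $r=0$ only requires the skeleton of the nonempty closed set $\Omega$ to be nonempty, and it falls inside $\mathcal{C}^i$.)

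\textbf{Part (iii).} Here I would fix $\omega\in\partial W_{{\rm Co}(\Omega)}(T)\cap W_{{\rm Co}(\Omega)}(T)\cap\mathcal{C}^r\cap C$ and write $K:={\rm Co}(\Omega)$ (convex; I replace $K$ by $\overline{K}$ wherever closedness is needed for Lemma \ref{lemfin2}, the skeletons coinciding up to closure). Since $\omega\in\mathcal{C}^r$, $F(\omega)$ has full row rank $2$, so $\Ran [F(\omega),G(\omega)]=2$ automatically and $P:=P(\omega)$ is an $(n-2)$-dimensional affine subspace; $\omega\in W_K(T)$ gives $P\cap K\neq\emptyset$. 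The crux is to show $P\cap\mathrm{int}\,K=\emptyset$. Suppose $\ve\alpha_0\in P\cap\mathrm{int}\,K$. Since $F$ is continuous at $\omega$ (as $\omega\in C$), a nonvanishing $2\times2$ minor of $F(\omega)$ persists on a neighbourhood of $\omega$, so $\Ran F(\omega')=2$ there; a rank-$2$, $2\times n$ matrix is onto, so $F(\omega')\ve\alpha=G(\omega')$ is solvable for those $\omega'$, and $\ve\alpha(\omega'):=\ve\alpha_0+F(\omega')^{+}\bigl(G(\omega')-F(\omega')\ve\alpha_0\bigr)$ is a solution depending continuously on $\omega'$ with $\ve\alpha(\omega')\to\ve\alpha_0$; hence $\ve\alpha(\omega')\in\mathrm{int}\,K$ for $\omega'$ near $\omega$, so a whole neighbourhood of $\omega$ lies in $W_K(T)$, contradicting $\omega\in\partial W_K(T)$. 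Thus $P$ meets $K$ but not $\mathrm{int}\,K$, so it meets $\partial K$; the hypotheses of Lemma \ref{lemfin2} hold with $m=n-2$, and it gives $\overline{K_{n-2}}\cap P\neq\emptyset$, i.e.\ $\omega\in W_{\overline{{\rm Co}(\Omega)_{n-2}}}(T)$, which together with the given membership in $\mathcal{C}^r\cap C$ is the asserted inclusion.

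\textbf{Expected obstacle.} Parts (i) and (ii) are routine once Proposition \ref{detcmult} is available. The substantive step is in (iii): converting the purely topological information $\omega\in\partial W_{{\rm Co}(\Omega)}(T)$ into the geometric fact that $P(\omega)$ misses $\mathrm{int}\,{\rm Co}(\Omega)$, which rests on the stability (continuity of the pseudoinverse) of the full-rank linear system \eqref{multeq}, and then matching this to the precise nondegeneracy hypothesis of Lemma \ref{lemfin2}. A secondary technicality is keeping careful track of ${\rm Co}(\Omega)$ versus $\overline{{\rm Co}(\Omega)}$ and their $(n-2)$-skeletons so that the final enclosure is literally $\overline{{\rm Co}(\Omega)_{n-2}}$.
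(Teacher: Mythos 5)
Your proof is correct and follows essentially the same route as the paper: Proposition \ref{detcmult}/Lemma \ref{lemfin1} together with the (decreasing) nesting of the skeletons for (i)--(ii), and for (iii) the continuity of solutions of the full-rank system \eqref{multeq} to show that the solution set $P(\omega)$ meets ${\rm Co}(\Omega)$ but not its interior, followed by Lemma \ref{lemfin2}. Your explicit pseudoinverse argument and your attention to the closedness of ${\rm Co}(\Omega)$ in (iii) are somewhat more careful than the paper's one-line versions of the same steps, but the underlying argument is identical.
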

\begin{proof}
{ \rm (i)}
From definition, $\Omega_{n-3}\subset \Omega$  and thus  $W_{\Omega_{n-3}}(T)\subset W_\Omega(T)$. Hence, the result follows if we can show that $W_{\Omega_{n-3}}(T)\supset W_{\Omega}(T)$. Assume $\omega\in W_{\Omega}(T)$, then from Proposition \ref{detcmult} it follows that \eqref{multeq} has a solution subspace $P\subset \R^n$ of dimension $m\geq n-2$ for $\omega$ and $P\cap \Omega\neq\emptyset$. From Lemma \ref{lemfin1} it then follows that $\Omega_{n-3}\cap P\supset \Omega_{m-1}\cap P\neq\emptyset$ and thus $\omega\in W_{\Omega_{n-3}}(T)$.

{ \rm (ii)}
The proof is analogous to the proof of {\rm (i)} with the exception that the solution subspace $P$ is of dimension at least $n-1$ which means that Lemma \ref{lemfin1} guarantees a solution in the smaller set $W_{\Omega_{n-2}}(T)$.

{ \rm (iii)} 
Assume that $\omega\in\partial W_{{\rm Co}(\Omega)}(T)\cap W_{{\rm Co}(\Omega)}(T)\cap\mathcal{C}^r\cap C$, from Proposition \ref{detcmult} there is an $n-2$-dimensional space $P\subset\R^n$  such that $t_{\ve{\alpha}}(\omega)=0$ for $\ve{\alpha}\in P$ and $P\cap{{\rm Co}(\Omega)}\neq\emptyset$. Since the solutions of a full rank problem is continuous in the coefficients and $f^{(j)}$ and $g$ are continuous at $\omega$, the property $\omega\in\partial W_{{\rm Co}(\Omega)}(T)$ implies that $\partial{{\rm Co}(\Omega)}\cap P\neq {{\rm Co}(\Omega)}\setminus\partial{{\rm Co}(\Omega)}\cap P= \emptyset$. Lemma \ref{lemfin2} then implies that $\overline{{\rm Co}(\Omega)_{n-2}} \cap P\neq\emptyset$, and thus $W_{\overline{{\rm Co}(\Omega)_{n-2}}}(T)$.

\end{proof}
Theorem \ref{maintmult} ({\rm iii}) can not be generalized to $\mathcal{C}^i$ without closing $W_{\overline{{\rm Co}(\Omega)_{n-2}}}(T)$ since, $W_\Omega(T)\cap \mathcal{C}^i$ is not closed in general, Remark \ref{imprem}.

\begin{rem}
If $\Omega:=W(A_1,\hdots,A_n)$ is closed, then Theorem \ref{maintmult} {\rm (i)}, and Proposition \ref{2Psats} yields that  $W_{\Omega_{n-3}}(T)=W_{{\rm Co}(W(A_1,\hdots,A_n))}(T)$.
\end{rem}

\begin{ex}
No general equality similar Theorem \ref{maint} {\rm (iii)} hold in the case when $n>2$. A very simple example that visualizes this is
\[
	T(\omega):=\omega^3+\omega^2A_1+\omega A_2+A_3,
\]
 where $W(A_1)\times W(A_2)\times W(A_3)\times [0,1]^3$. It then follows that $\mathcal{C}^i=\R$, $\mathcal{C}^r=\C\setminus \R$, $H=\C$ and that $\Omega_{1}$ consist of the edges of the cube $[0,1]^3$.
 Hence, $i/2\in W_{\Omega_{1}}(T) \cap \mathcal{C}^r$ since $i/2\in \mathcal{C}^r$ is a root of $\omega^3+\omega\frac{1}{4}$ and $(0,1/4,0)\in\Omega_1$. However $i/2\notin\partial W_\Omega(T)$ as can be seen in Figure \ref{4fig:manta}. 
 
 Conversely, there are non-trivial cases when $ W_{\overline{{\rm Co}(\Omega)_{n-2}}}(T)\cap \mathcal{C}^r\cap H = \partial W_\Omega(T)\cap \mathcal{C}^r\cap H$, with $n>2$.
\end{ex}

\section{Resolvent estimate}
In this section we present enclosures of the $\epsilon$-pseudonumerical range and upper estimates on the norm of the resolvent for the closure of \eqref{tmulthat}, which is a generalization of the results in \cite[Section 4]{ATEN}.  The $\epsilon$-pseudospectrum, $\sigma^\epsilon(T)$,  $\epsilon>0$ of an operator function $T$ is usually defined as $\omega\in \C$ such that $\omega\in \sigma(T)$ or $\|T(\omega)^{-1}\|>\epsilon^{-1}$. However, there is a number of equivalent definitions of $\sigma^\epsilon(T)$. For this section the most suiting definition is: 
\[
	\sigma^\epsilon(T):=\sigma(T)\cup\{\omega\in\doma\setminus\sigma(T): \exists u\in \dom(T(\omega))\setminus\{0\},\|T(\omega)u\|/\|u\|<\epsilon\}.
\]
 Pseudospectrum and resolvent estimates are useful in describing how well-behaved the operator  $T(\omega)^{-1}$ is for $\omega\in\doma\setminus\sigma(T)$, for further reading see \cite{MR2155029}. This is related to the $\epsilon$-numerical range given in Definition \ref{4pnr}, courtesy  to \cite[Definition 4.1]{ATEN}.
 \begin{defn}\label{4pnr}
For an operator function $T:\doma\rightarrow\Ls(\Hs)$ we define the $\epsilon$-pseudonumerical range as the set
\[
W^\epsilon(T):=W(T)\cup\{\omega\in\mathcal{C}\setminus W(T):\exists u\in\dom (T)\setminus\{0\}, |\ip{T(\omega)u}{u}|/\|u\|^2<\epsilon\}.
\]
\end{defn}

 If $T(\omega)=\widehat{T}-\omega$ then Definition \ref{4pnr} simplifies to 
\[
	W^\epsilon(T):=W(T)\cup\{\omega\in\doma\setminus W(T) :\exists \omega_0\in W(T): |\omega-\omega_0|<\epsilon\},
\]
and thus the definition of the $\epsilon$-pseudonumerical range is trivial for linear operator functions. 

The property, $\|T(\omega)u\|\|u\|\geq |\ip{T(\omega)u}{u}|$ for $u\in \dom (T)$ yields that
 $W^\epsilon(T)\setminus \sigma(T)\supset \sigma^\epsilon(T)\setminus \sigma(T)$. Hence, $\epsilon$-pseudonumerical range yields an upper estimate on the norm of the resolvent in $\doma\setminus W^\epsilon(T)$. 

However, just as in the case of numerical range, computing $W^\epsilon(T)$ explicitly for a general operator function is not possible.
For $T$ defined as the closure of \eqref{tmulthat}, $t_{\ve{\alpha}}$ defined  in \eqref{org2fun}, and a given set $X\subset \R^n$, let $W_X^\epsilon(T)\subset \C$ denote the set
\begin{equation}\label{1encge}
	W_X^\epsilon(T):=\{\omega\in \mathcal{C}: \exists \ve{\alpha}\in X, |t_{\ve{\alpha}}(\omega)|<\epsilon\}.
\end{equation}

\begin{lem}\label{5lemjoin}
Let $T$ be defined as the closure of \eqref{tmulthat}, $W(\ve{A})$ denote the joint numerical range of $A_1,\hdots,A_n$ and $W_{W(\ve{A})}^\epsilon(T)$ be a set on the form \eqref{1encge}. Then $W^\epsilon(T)=W_{W(\ve{A})}^\epsilon(T)$.
\end{lem}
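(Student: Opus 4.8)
The plan is to prove the two inclusions separately, and for each one to exploit the now-familiar dichotomy between vectors in $\dom(\ve{A})$ and vectors in $\dom(T(\omega))\setminus\dom(\ve{A})$, using Lemma~\ref{2lemclos} to handle the latter. The inclusion $W_{W(\ve{A})}^\epsilon(T)\subset W^\epsilon(T)$ is the routine direction: if $\ve{\alpha}\in W(\ve{A})$ is realized by a unit vector $u\in\dom(\ve{A})\subset\dom(T(\omega))$ and $|t_{\ve{\alpha}}(\omega)|<\epsilon$, then $\ip{T(\omega)u}{u}=g(\omega)+\sum_j f^{(j)}(\omega)\ip{A_ju}{u}=t_{\ve{\alpha}}(\omega)$, so $|\ip{T(\omega)u}{u}|/\|u\|^2<\epsilon$, whence $\omega\in W^\epsilon(T)$ (and if $\omega\in W(T)$ there is nothing to prove). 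This should be a couple of lines.

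**The substantive direction** is $W^\epsilon(T)\subset W_{W(\ve{A})}^\epsilon(T)$. Take $\omega\in W^\epsilon(T)$; if $\omega\in W(T)$ then by Lemma~\ref{2lemclos} (or rather by the identity $W(T)=W_{W(\ve{A})}(T)$ when $\dom(\ve{A})=\dom(T(\omega))$, together with the approximation statement in general) we already have $\omega\in W_{W(\ve{A})}(T)\subset W_{W(\ve{A})}^\epsilon(T)$ — though some care is needed here, since for unbounded non-holomorphic $T$ the set $W_{W(\ve{A})}(T)$ need not contain all of $W(T)$; one must instead argue directly that a near-null vector can be found. So assume $\omega\in W^\epsilon(T)\setminus W(T)$: there is a unit $u\in\dom(T(\omega))$ with $|\ip{T(\omega)u}{u}|<\epsilon$. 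Fix $\delta>0$ with $|\ip{T(\omega)u}{u}|+\delta<\epsilon$. I would now approximate: choose a unit $v\in\dom(\ve{A})$ close to $u$ (in the graph-type sense used in the proof of Lemma~\ref{2lemclos}) so that $|\ip{T(\omega)v}{v}-\ip{T(\omega)u}{u}|<\delta$; this is possible because $u\in\dom(T(\omega))=\overline{\dom(\ve{A})}$ in the closure sense and $T(\omega)$ is the closure of $\widehat{T}(\omega)$. Then with $\ve{\alpha}:=\ip{\ve{A}v}{v}\in W(\ve{A})$ we get $t_{\ve{\alpha}}(\omega)=\ip{T(\omega)v}{v}$ and hence $|t_{\ve{\alpha}}(\omega)|<\epsilon$, so $\omega\in W_{W(\ve{A})}^\epsilon(T)$.

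**The main obstacle** is the approximation step when $T$ is genuinely unbounded: $u\in\dom(T(\omega))$ only guarantees a sequence $v_i\in\dom(\ve{A})$ with $v_i\to u$ and $\widehat{T}(\omega)v_i\to T(\omega)u$ in norm, hence $\ip{\widehat{T}(\omega)v_i}{v_i}\to\ip{T(\omega)u}{u}$; this gives exactly what is needed, since $\ip{\widehat T(\omega)v_i}{v_i}=t_{\ip{\ve A v_i}{v_i}}(\omega)$ is of the required form with $\ip{\ve A v_i}{v_i}\in W(\ve A)$. The delicate point, already confronted in Lemma~\ref{2lemclos}, is that individual coefficients $\ip{A_j v_i}{v_i}$ may run off to infinity even while the combination $t$ converges; but here — in contrast to the equality $t_{\ve\alpha}(\omega)=0$ — we only need $|t_{\ve\alpha}(\omega)|<\epsilon$ for a \emph{single} admissible $\ve\alpha\in W(\ve A)$, and $\ve\alpha=\ip{\ve A v_i}{v_i}$ for large $i$ works directly, with no need to replace the $v_i$ or pass to the closure $\overline{W(A_j)}$. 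So the strict inequality makes this lemma strictly easier than Lemma~\ref{2lemclos}, and the proof should reduce to: (i) the trivial inclusion; (ii) the case $\omega\in W(T)$, handled by the same approximation; (iii) the case $\omega\in W^\epsilon(T)\setminus W(T)$, handled by choosing $i$ large enough that both $\|u-v_i\|$ is small (not actually needed) and $|\ip{\widehat T(\omega)v_i}{v_i}|<\epsilon$.
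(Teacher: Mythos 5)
Your proposal is correct and follows essentially the same route as the paper: the trivial inclusion from $\dom(\ve{A})\subset\dom(T(\omega))$, and for the converse an approximating sequence $v_i\in\dom(\ve{A})$ with $v_i\to u$ and $\ip{\widehat{T}(\omega)v_i}{v_i}\to\ip{T(\omega)u}{u}$ (valid since $T(\omega)$ is the closure of $\widehat{T}(\omega)$), taking $\ve{\alpha}=\ip{\ve{A}v_i}{v_i}\in W(\ve{A})$ for $i$ large. Your observation that the strict inequality makes this strictly easier than Lemma~\ref{2lemclos} — no need to control individual coefficients running off to infinity — is exactly why the paper's proof is short.
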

\begin{proof}
From that $\dom(\ve{A})\subset \dom(T(\omega))$ it follows that $W^\epsilon(T)\supset W_{W(\ve{A})}^\epsilon(T)$, so only the converse has to be proven. Assume that $\omega\in W^\epsilon(T) $ then there is some unit vector $u\in\dom(T)$ such that $\ip{T(\omega)u}{u}=e$ where $|e|<\epsilon$. Since $\dom(\ve{A})$ is dense in $\Hs$ it follows that there is a sequence $\{v_i\}_{i=1}^\infty\in \dom(\ve{A})$ of unit vectors such that $v_i\rightarrow u$ and $\ip{T(\omega)v_i}{v_i}\rightarrow e$. Hence, by choosing $i$ large enough it follows that $|\ip{T(\omega)v_i}{v_i}|<\epsilon$. Define $\ve{\alpha}=(\ip{A_1v_i}{v_i},\hdots, \ip{A_nv_i}{v_i})\in W(\ve{A})$, then $|t_{\ve{\alpha}}(\omega)|<\epsilon$, which implies that $\omega\in W_{W(\ve{A})}^\epsilon(T)$. Therefore $W^\epsilon(T)\subset W_{W(\ve{A})}^\epsilon(T)$ and the result follows.
\end{proof}
 Lemma \ref{5lemjoin} states that  $W_{W(\ve{A})}^\epsilon(T)$ is the $\epsilon$-pseudonumerical range of $T$, \eqref{4pnr}, despite that $W(T)\neq W_{W(\ve{A})}(T)$ in general. Hence, for the operator function $T$ defined as the closure of \eqref{tmulthat} and $\Omega\supset W(A_1,\hdots,A_n)$ the set
\begin{equation}\label{5enrenc}
	W_\Omega^\epsilon(T)=W_\Omega(T)\cup\{\omega\in\doma\setminus W_\Omega(T) :\exists \ve{\alpha}\in\Omega, |t_{\ve{\alpha}}(\omega)|<\epsilon\},
\end{equation}
gives an enclosure of $W^\epsilon(T)$, where $t_{\ve{\alpha}}$ is given by \eqref{org2fun}. 

\begin{center}
\begin{figure}
\includegraphics[width=12.5cm]{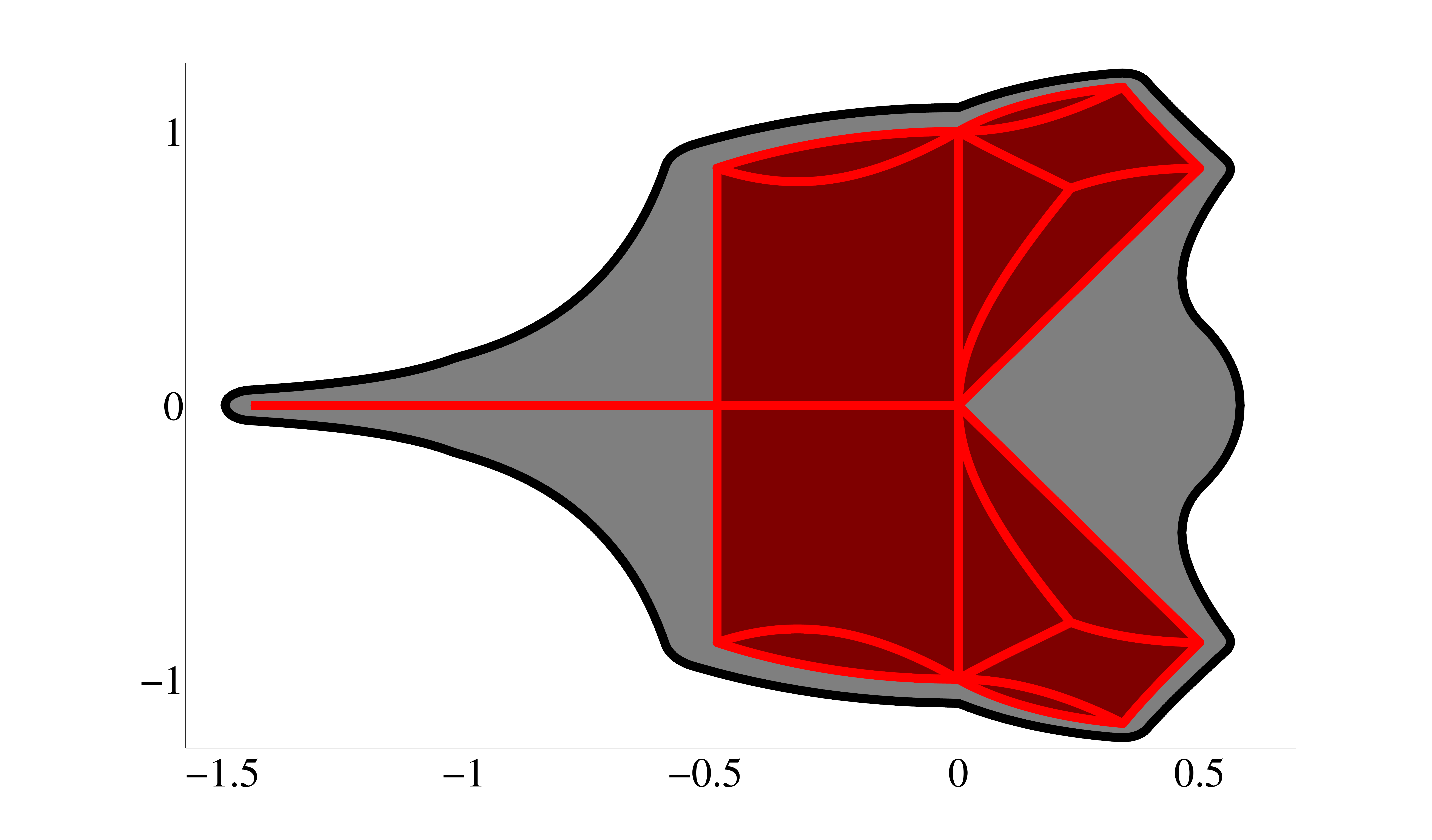}
\caption{The inner part denotes $W_\Omega(T)$ where the bright lines denote $W_{\Omega_{1}}(T)$. The outer area denote $W_\Omega^{\frac{1}{5}}(T)$.
Here, $T(\omega):=\omega^3+\omega^2A_1+\omega A_2+A_3$, with $\Omega=W(A_1)\times W(A_2)\times W(A_3)=[0,1]^3$. }\label{4fig:manta}
\end{figure}
\end{center}

If $\Omega$ is a bounded set and $f^{(j)},g$, $j=1,\hdots,n$ are continuous it follows that 
\begin{equation}\label{5niceeps}
	 \partial W_\Omega^\epsilon(T) \subset\{\omega\in\doma:\inf_{\ve{\alpha}\in \Omega } |t_{\ve{\alpha}}(\omega)|=\epsilon \},
\end{equation}
with equality if they are holomorphic and linearly independent. However, if $\Omega$ is unbounded then the sequence $\{\ve{\alpha}^{(i)}\}_{i=1}^\infty\in\Omega$ such that $ |t_{\ve{\alpha}^{(i)}}(\omega)|\rightarrow \epsilon$ is not always convergent. The results in Theorem \ref{3emaint} {\rm (iii), (iv)} are generalizations of \eqref{5niceeps} to unbounded $\Omega$.\begin{thm}\label{3emaint}
Let $T$ denote the closure of \eqref{tmulthat} and  sets of the type $W^\epsilon_X(T)$ be defined by \eqref{1encge}, let $\Omega$ be a closed set satisfying $W(A_1,\hdots,A_n)\subset \Omega\subset \overline{W(A_1)}\times\hdots\times \overline{W(A_n)}$ and let $\Omega_m$ be defined by Definition \ref{mskel}.  Furthermore, denote by $C$ the set where $f^{(j)},g$, $j=1,\hdots,n$ are continuous, and $H$ as the set where they are holomorphic and linearly independent. If either $n=2$ or $\Omega$ is convex or bounded define  $m:=n-2$, otherwise define  $m:=n-3$. Then \begin{itemize}
 \item[{\rm (i)}] $W_{\Omega_{m}}^\epsilon(T)\setminus W_\Omega(T)=W_\Omega^\epsilon(T)\setminus W_\Omega(T)$,
 \item[{\rm (ii)}] $W_{\Omega_{n-2}}^\epsilon(T)\cap \doma^i\setminus W_\Omega(T)=W_\Omega^\epsilon(T)\cap \doma^i\setminus W_\Omega(T)$,
  \item[{\rm (iii)}] $ \{\omega\in\partial W_\Omega^\epsilon(T) \cap C:\epsilon'>\epsilon,\omega\notin\partial W_\Omega^{\epsilon'}(T) \}\subset\{\omega\in C: \inf_{\ve{\alpha}\in \Omega } |t_{\ve{\alpha}}(\omega)|=\epsilon \}$,
    \item[{\rm (iv)}] $\{\omega\in\partial W_\Omega^\epsilon(T) \cap H:\epsilon'>\epsilon ,\omega\notin\partial W_\Omega^{\epsilon'}(T) \}=\{\omega\in H: \inf_{\ve{\alpha}\in \Omega } |t_{\ve{\alpha}}(\omega)|=\epsilon \}$.
 \end{itemize}
\end{thm}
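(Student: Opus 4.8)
The plan is to treat the four parts in the same spirit as the proofs of Theorem \ref{maint} and Theorem \ref{maintmult}, replacing the equation $t_{\ve{\alpha}}(\omega)=0$ by the inequality $|t_{\ve{\alpha}}(\omega)|<\epsilon$ throughout, and exploiting that a point lies in $W_\Omega^\epsilon(T)\setminus W_\Omega(T)$ precisely when \eqref{multeq} is \emph{unsolvable} in $\Omega$ but becomes ``almost solvable''. The key observation for (i) and (ii) is the following: if $\omega\notin W_\Omega(T)$ but $\omega\in W_\Omega^\epsilon(T)$, pick $\ve{\alpha}\in\Omega$ with $|t_{\ve{\alpha}}(\omega)|<\epsilon$. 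I would like to replace $\ve{\alpha}$ by a point of the skeleton $\Omega_m$ without increasing $|t_{\ve{\alpha}}(\omega)|$. Since $t_{\ve{\alpha}}(\omega)$ is an \emph{affine} function of $\ve{\alpha}\in\R^n$ (its level sets $\{|t_{\ve{\alpha}}(\omega)|<\epsilon\}$ are, in the $\mathcal{C}^r$ case, a slab around an $(n-2)$-plane, and in the $\mathcal{C}^i$ case a slab around an $(n-1)$-plane or all of $\R^n$), the set $S:=\{\ve{\beta}\in\R^n:|t_{\ve{\beta}}(\omega)|\le|t_{\ve{\alpha}}(\omega)|\}$ is a closed convex set containing an affine subspace of dimension $n-r$, where $r=\Ran F(\omega)\in\{0,1,2\}$. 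Then $\Omega\cap S\neq\emptyset$, and I would slide along $S$: intersecting $\Omega$ with larger and larger translates/level sets and applying Lemma \ref{lemfin1} (or the splitting into $\Omega^{(i)}$ with $J_{\Omega^{(i)}}=\emptyset$ from its proof) to the face of $\Omega\cap S$ that first meets the boundary. This produces a point of $\Omega_{n-r-1}\subset\Omega_m$ still satisfying $|t_{\ve{\beta}}(\omega)|\le|t_{\ve{\alpha}}(\omega)|<\epsilon$, giving $\omega\in W_{\Omega_m}^\epsilon(T)$; the reverse inclusion is immediate from $\Omega_m\subset\Omega$. Part (ii) is the same argument in $\mathcal{C}^i$ where $r<2$ forces dimension $\ge n-1$, so Lemma \ref{lemfin1} lands in $\Omega_{n-2}$.

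For the convex case I would instead invoke Lemma \ref{lemfin2} in place of Lemma \ref{lemfin1}: when $\Omega$ is convex, $\Omega\cap S$ is convex, and a boundary point of this intersection relative to the affine subspace directions lies in $\overline{\Omega_{n-2}}$; but since here we have strict inequality $|t_{\ve{\alpha}}(\omega)|<\epsilon$ there is slack, so one can perturb $\ve{\alpha}$ slightly to an interior-of-$S$ point and then push to $\Omega_{n-2}$ without needing the closure, which is why $m=n-2$ works for convex (and bounded, and $n=2$) $\Omega$ but only $m=n-3$ in general. The bounded case is easiest: then $\Omega=\Omega_{n-1}$ has no infinite directions, $J_\Omega=\emptyset$, and the infimum defining membership is attained, so one reduces directly via Definition \ref{mskel} and Lemma \ref{lemfin1} with $r\le 2$.

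For (iii) and (iv) the strategy mirrors Theorem \ref{maint} {\rm (ii),(iii)} and \eqref{5niceeps}. Let $\omega\in\partial W_\Omega^\epsilon(T)\cap C$ with $\omega\notin\partial W_\Omega^{\epsilon'}(T)$ for every $\epsilon'>\epsilon$. The hypothesis $\omega\notin\partial W_\Omega^{\epsilon'}(T)$ for $\epsilon'>\epsilon$ means $\omega$ is interior to $W_\Omega^{\epsilon'}(T)$, hence $\inf_{\ve{\alpha}\in\Omega}|t_{\ve{\alpha}}(\omega)|\le\epsilon'$ for all $\epsilon'>\epsilon$, giving $\le\epsilon$; conversely $\omega\in\partial W_\Omega^\epsilon(T)$ means $\omega\notin\mathrm{int}\,W_\Omega^\epsilon(T)$, so there are $\omega'$ arbitrarily close with $\inf_{\ve{\alpha}\in\Omega}|t_{\ve{\alpha}}(\omega')|\ge\epsilon$, and continuity of $(\omega,\ve{\alpha})\mapsto t_{\ve{\alpha}}(\omega)$ — using that on a bounded piece of $\Omega$ the infimum is continuous in $\omega$, and reducing to such a bounded piece since only finitely many ``infinite directions'' matter and along them $|t_{\ve{\alpha}}(\omega)|\to\infty$ when $\Ran F(\omega)=2$ and is controlled otherwise — yields $\inf_{\ve{\alpha}\in\Omega}|t_{\ve{\alpha}}(\omega)|\ge\epsilon$. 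Together these give equality, proving the inclusion in (iii). For (iv), in $H$ the functions are holomorphic and linearly independent, so $\omega\mapsto\inf_{\ve{\alpha}\in\Omega}|t_{\ve{\alpha}}(\omega)|$ cannot be locally constant and the value $\epsilon$ is actually attained transversally; then the standard fact that zeros of holomorphic functions move continuously under holomorphic perturbation (already used in Theorem \ref{maint} {\rm (iii)}) upgrades the inclusion to an equality, since any $\omega$ with $\inf_{\ve{\alpha}\in\Omega}|t_{\ve{\alpha}}(\omega)|=\epsilon$ is a limit of points where the infimum exceeds $\epsilon$ and of points where it is below $\epsilon$, hence lies on $\partial W_\Omega^\epsilon(T)$, and the non-degeneracy prevents it from lying on $\partial W_\Omega^{\epsilon'}(T)$ for $\epsilon'>\epsilon$.

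I expect the main obstacle to be the reduction to a \emph{bounded} portion of $\Omega$ in parts (iii) and (iv), i.e. justifying that the infimum $\inf_{\ve{\alpha}\in\Omega}|t_{\ve{\alpha}}(\omega)|$ behaves like a proper, continuous function of $\omega$ despite $\Omega$ being unbounded: along an infinite coordinate direction $\alpha_j\to\pm\infty$ the term $f^{(j)}(\omega)\alpha_j$ dominates and drives $|t_{\ve{\alpha}}(\omega)|$ to infinity \emph{unless} $f^{(j)}(\omega)$ is (nearly) a real combination of the other $f^{(k)}(\omega)$ — which is exactly the distinction encoded in $\mathcal{C}^i$ versus $\mathcal{C}^r$ and in the skeleton construction \eqref{boundinf}. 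Handling this cleanly requires splitting $\Omega$ into the finitely many pieces $\Omega^{(i)}$ with $J_{\Omega^{(i)}}=\emptyset$ exactly as in the proof of Lemma \ref{lemfin1}, bounding each piece in the relevant directions, and checking that the $\epsilon$-sublevel set meets only finitely many of these pieces near $\omega$. The other routine-but-delicate point is keeping strict versus non-strict inequalities straight — membership in $W_\Omega^\epsilon(T)$ uses $<\epsilon$ while the boundary characterization uses $=\epsilon$ — which is what dictates whether one needs the closure $\overline{\Omega_{n-2}}$ (as in Theorem \ref{maintmult} {\rm (iii)}) or can avoid it here because of the open condition.
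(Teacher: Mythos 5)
Your overall strategy for (i)--(ii) (pass to the affine solution set of $t_{\ve{\alpha}}(\omega)=e$ and apply Lemma \ref{lemfin1}) is the paper's strategy and it does deliver (ii) and the case $m=n-3$ of (i). It cannot, however, deliver (i) when $m=n-2$: the skeleta are nested the other way around ($\Omega_{n-2}\subset\Omega_{n-3}$; in Example \ref{4skelex} the index-$(n-1)$ skeleton of the cube is its vertex set), so your claimed inclusion $\Omega_{n-r-1}\subset\Omega_m$ is false for $r=2$, $m=n-2$, and Lemma \ref{lemfin1} applied to an $(n-2)$-dimensional solution plane stops at $\Omega_{n-3}$. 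The three special cases therefore each need their own argument, and yours are incomplete exactly where the work lies. For bounded $\Omega$ the assertion that one ``reduces directly via Lemma \ref{lemfin1}'' fails for the reason just given (and $\Omega=\Omega_{n-1}$ is not true for bounded $\Omega$); since Lemma \ref{lemfin2} requires convexity, some extra idea is indispensable here, and the paper supplies one: replace the flat plane $P_r$ by a slightly curved, strictly convex surface $\Gamma_k$ contained in the $\epsilon$-slab, slide it to first contact with $\Omega$, and use the tangent plane at the contact point to isolate a single point of $\Omega$, which is what lands in $\Omega_{n-2}$. Your convex case is in the right spirit (Lemma \ref{lemfin2} plus the slack from the strict inequality to discard the closure), but you do not treat the case where no first-contact plane $P_{r'}$ exists because the infimum of contact parameters is not attained; the paper handles this with a separate approximation. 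The $n=2$ case is never argued; it is easy (the unique solution point of $t_{\ve{\alpha}}(\omega)=re$ traces a path from outside $\Omega$ into $\Omega$ and must cross $\parti\Omega=\Omega_0$) but it is not a consequence of your slab construction, which for $n=2$, $r=2$ only outputs $\Omega_{-1}=\Omega$.

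In (iii) you route the inequality $\inf_{\ve{\alpha}\in\Omega}|t_{\ve{\alpha}}(\omega)|\geq\epsilon$ through lower semicontinuity of $\omega\mapsto\inf_{\ve{\alpha}\in\Omega}|t_{\ve{\alpha}}(\omega)|$ and then identify this as the main obstacle; an infimum of continuous functions is only upper semicontinuous, and your proposed reduction to bounded pieces of $\Omega$ is not carried out. The obstacle is self-inflicted: if $\inf_{\ve{\alpha}\in\Omega}|t_{\ve{\alpha}}(\omega)|<\epsilon$, then a single $\ve{\alpha}'$ with $|t_{\ve{\alpha}'}(\omega)|<\epsilon$ together with continuity of $\omega'\mapsto t_{\ve{\alpha}'}(\omega')$ already places a whole neighborhood of $\omega$ inside $W_\Omega^\epsilon(T)$, contradicting $\omega\in\partial W_\Omega^\epsilon(T)$; no semicontinuity of the infimum is needed. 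For the reverse inclusion in (iv) the substantive step is missing: from $\inf_{\ve{\alpha}\in\Omega}|t_{\ve{\alpha}}(\omega)|=\epsilon$ you must produce points arbitrarily close to $\omega$ that lie in $W_\Omega^\epsilon(T)$. ``Zeros of holomorphic functions move continuously'' is not the relevant fact, since nothing vanishes at level $\epsilon>0$; the paper takes a minimizing sequence $\ve{\alpha}^{(i)}$, applies the minimum modulus principle to each non-constant holomorphic $t_{\ve{\alpha}^{(i)}}$ to find nearby points of strictly smaller modulus, and invokes linear independence of $g,f^{(1)},\hdots,f^{(n)}$ to exclude that $t_{\ve{\alpha}^{(i)}}$ degenerates to a constant of modulus $\epsilon$, which is what guarantees the modulus drops below $\epsilon$ for large $i$. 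Your appeal to the infimum being attained ``transversally'' presupposes attainment, which is precisely what can fail for unbounded $\Omega$.
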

\begin{proof}
{\rm (i)-(ii)} From definition it follows that $W_{\Omega_{m}}^\epsilon(T)\setminus W_\Omega(T)\subset W_\Omega^\epsilon(T)\setminus W_\Omega(T)$. 
Assume that $\omega\in W_\Omega^\epsilon(T)\setminus W_\Omega(T)$, then for some $\ve{\alpha}\in \Omega$,  $t_{\ve{\alpha}}(\omega)=e$ where $0<|e|<\epsilon$. Then similar to Proposition \ref{detcmult} there is a space $P\subset\R^n$ such that $t_{\ve{\alpha}}(\omega)=e$ for $\ve{\alpha}\in P$, where the dimension of $P$ is $n-2$ if $\omega\in\doma^r$ and at least $n-1$ if $\omega\in\doma^i$. From Lemma \ref{lemfin1} it follows that $\omega\in W_{\Omega_{n-3}}^\epsilon(T)\setminus W_\Omega(T)$ if $\omega\in\doma^r$ and $\omega\in W_{\Omega_{n-2}}^\epsilon(T)\setminus W_\Omega(T)$ if $\omega\in\doma^i$ similar to Theorem \ref{maintmult} {\rm (i)-(ii)}. This proves {\rm (ii)} and also and implies {\rm (i)} when $\omega\in\doma^i$ or $m=n-3$. 

Hence, we only have to show the result for $\omega\in \mathcal{C}^r$ and $m=n-2$.  It follows similar to Proposition \ref{detcmult} that for all $r\in [0,1]$ there is a $n-2$-dimensional solution space $P_{r}\subset\R^n$ such that $t_{\ve{\alpha}}(\omega)=re$ for $\ve{\alpha}\in P_r$. From construction these spaces are parallel.  There are three cases to investigate: $n=2$, $\Omega$ is convex or $\Omega$ is bounded. Define $\Omega^{(i)}$ as in the proof of Lemma \ref{lemfin1} such that $P_1\cap \Omega^{(i)}\neq\emptyset$. Since $\omega\notin W_\Omega(T)$ it follows that $P_0\cap \Omega^{(i)}=\emptyset$. 

Assume that $n=2$. Then it follows that $P_r$ is only a point for all $r\in[0,1]$. Hence there is a $r'\in(0,1)$  such that $P_{r'}=\{\ve{\alpha}'\}\subset \partial \Omega \subset \Omega_0$ and the result follows in this case since $|t_{\ve{\alpha}'}(\omega)|=|r'e|<\epsilon$. Hence, in the following assume that $\Omega$ is either bounded or convex.

Similar to the proof of Lemma \ref{lemfin2}, we assume that $P_r= \{\ve{\alpha }\in\R^n:\alpha_{n-1}=r,\alpha_{n} =0\}$, which will be true in some coordinate system. Define the $n-1$-dimensional subspace $R:= \{\ve{\alpha }\in\R^n:\alpha_{n} =0\}$, and use the notation $\ve{\alpha}=(\alpha_1,\hdots,\alpha_{n-1})$ for $\ve{\alpha}\in R$. Define $\Omega':=\Omega^{(i)}\cap R$, which then is non-empty.

Assume that  $\Omega$ is bounded, then  $\Omega'$ is bounded as well. Define for $\epsilon>0$, $k\geq0$ and for real $x_j$, $j=1,\hdots,n-2$ the function
\[
	y_k(x_1,\hdots,x_{n-2})=k+\epsilon\sum_{j=1}^{n-2}x_j^2,
\]
and the $n-2$-dimensional surface in $R$
\[
	\Gamma_k:=\{(\alpha_1,\hdots,\alpha_{n-2},y_k(\alpha_1,\hdots,\alpha_{n-2})):(\alpha_1,\hdots,\alpha_{n-2})\in \R^{n-2}\}.
\]
For each $\epsilon>0$, the surface $\Gamma_k$ has a unique tangent. Furthermore, if $\epsilon$ is small enough then $\Gamma_0\cap \Omega'=\emptyset$ since $\Omega'$ is bounded. For each small $\epsilon>0$ there is a unique $k\leq 1$ such that there is an  $\ve{\alpha}'\in\Gamma_k\cap \Omega'$ and $\Gamma_{k'}\cap \Omega'=\emptyset$ for $k'<k$. It also follows that if  $\epsilon>0$ is small enough then $|t_{\ve{\alpha}'}(\omega)|<\epsilon$ and the tangent plane of $\Gamma_k$ at $\ve{\alpha}'$ does not intersect any other point in $\Omega'$, since $\Gamma_k$ is close to the plane $P_k$. The result then follows from that $\Omega'\subset\Omega^{(i)}$.

 Assume that $\Omega$ is convex, then  $\Omega'$ is convex as well. If there is some smallest $r'\in(0,1)$ such that $P_{r'}\cap \Omega'\neq\emptyset$ then Lemma \ref{lemfin2} implies that there is some $\ve{\alpha}'\in\Omega_{n-2}$ such that $|t_{\ve{\alpha}'}(\omega)|<\epsilon$ due to $t$ being continuous  in $\ve{\alpha}$. Hence, we only have to show the result when no such $r'$ exists. In that case there is a largest $r'$  such that $P_{r'}\cap \Omega'=\emptyset$. The distance between $P_{r'}$ and  $\Omega'$ must then be $0$ due to continuity. Choose a point in $\ve{\alpha}'\in\partial\Omega'$ such that ${\rm dist}(P_{r'},\ve{\alpha}')<\epsilon'$ for some $\epsilon'>0$. Let $P'$ denote a $n-2$-dimensional hyper plane  that tangents $\Omega'$ at $\ve{\alpha}'$, (if $\Omega'$ is smooth at that point this tangent is unique).  Hence, Lemma \ref{lemfin2} implies that $ \overline{\Omega'_{n-2}}\cap P'\neq \emptyset$. From the construction at least one point in  $\ve{\alpha}\in\overline{\Omega'_{n-2}}\cap P'$, must satisfy ${\rm dist}(P_{r'},\ve{\alpha})\leq \epsilon'$. Hence, if $\epsilon'<r'-1$ then $|t_{\ve{\alpha}}(\omega)|<\epsilon$, and the result follows from that $\Omega'\subset\Omega^{(i)}$.

 {\rm (iii)} 
 Assume that $\omega\in \{\omega\in\partial W_\Omega^\epsilon(T) \cap C:\epsilon'>\epsilon ,\omega\notin\partial W_\Omega^{\epsilon'}(T) \}$. Assume that $\inf_{\ve{\alpha}\in\Omega}|t_{\ve{\alpha}}(\omega)|<\epsilon$. Then there is some $\ve{\alpha}'\in\Omega$ such that $|t_{\ve{\alpha}'}(\omega)|<\epsilon$. By continuity there is a neighborhood $N$ of $\omega$ such that $|t_{\ve{\alpha}'}(\omega')|<\epsilon$ for $\omega'\in N$. This contradicts that $\omega\in\partial W_\Omega^\epsilon(T)$. Hence,  $\inf_{\ve{\alpha}\in\Omega}|t_{\ve{\alpha}}(\omega)|\geq\epsilon$. If $\inf_{\ve{\alpha}\in\Omega}|t_{\ve{\alpha}}(\omega)|=\epsilon'>\epsilon$ then since $ \omega\notin W_\Omega^{\epsilon'}(T)\supset W_\Omega^\epsilon(T)$ it follows that $\omega=\partial W_\Omega^{\epsilon'}(T)$ which is a contradiction and {\rm (iii)} follows. 
 
 {\rm (iv)} 
 Assume that $\omega\in H$. Then $\{\omega\in\partial W_\Omega^\epsilon(T) \cap H:\epsilon'>\epsilon ,\omega\notin\partial W_\Omega^{\epsilon'}(T) \}\subset\{\omega\in H: \inf_{\ve{\alpha}\in \Omega } |t_{\ve{\alpha}}(\omega)|=\epsilon \}$
  follows directly from $H\subset C$ and {\rm (iii)}.
  Assume that $\inf_{\ve{\alpha}\in\Omega}|t_{\ve{\alpha}}(\omega)|=\epsilon$ and let $\{\ve{\alpha}^{(i)}\}_{i=1}^\infty$ be a sequence in $\Omega$ such that $|t_{\ve{\alpha}^{(i)}}(\omega)|\rightarrow\epsilon$.  Since $\epsilon>0$ and  $t_{\ve{\alpha}^{(i)}}$ is non-constant and holomorphic around $\omega$, the minimum modulus principle states that for each neighborhood $N$ of $\omega$, there is a $\omega^{(i)}\in N$ such that $\epsilon^{(i)}:=|t_{\ve{\alpha}^{(i)}}(\omega^{(i)})|<|t_{\ve{\alpha}^{(i)}}(\omega)|$. 
   Additionally since $f^{(j)},g$, $j=1,\hdots,n$ are linearly independent $t_{\ve{\alpha}^{(i)}}$ does not approach a function locally constant $\epsilon$ around $\omega$. This implies that $\omega^{(i)}\in N$ can be chosen such that $\epsilon^{(i)}<\epsilon$ for $i$ large enough and thus $\omega^{(i)}\in W_\Omega^\epsilon(T)$. $N$ can be chosen arbitrary small which implies that $\omega\in \partial W_\Omega^\epsilon(T)$. Additionally for each $\epsilon'>\epsilon$, there is neighborhood $N'$ of $\omega$ and an $i$ such that  $|t_{\ve{\alpha}^{(i)}}(\omega')|<\epsilon'$ for $\omega'\in N'$. Hence, $\omega\notin \partial W_\Omega^{\epsilon'}(T)$. This proves $\{\omega\in\partial W_\Omega^\epsilon(T) \cap H:\epsilon'>\epsilon ,\omega\notin\partial W_\Omega^{\epsilon'}(T) \}\supset\{\omega\in H: \inf_{\ve{\alpha}\in \Omega } |t_{\ve{\alpha}}(\omega)|=\epsilon \}$, and ${\rm (iv)}$ follows.
\end{proof}

\begin{rem}
 In Theorem \ref{3emaint} {\rm (iii), (iv)} it is equivalent to verify that $\omega\notin\partial W_\Omega^{\epsilon'}(T) $ as $\epsilon'\rightarrow \epsilon$ from above due to monotonicity. Hence, these sets can be defined as limits. 
\end{rem}

\begin{rem}
From Theorem \ref{maintmult} {\rm (i)} and Theorem \ref{3emaint} {\rm (i)} it follows that $ W_{\Omega_{n-3}}^\epsilon(T)= W_\Omega^\epsilon(T)$. 
\end{rem}

\begin{center}
\begin{figure}
\includegraphics[width=12.5cm]{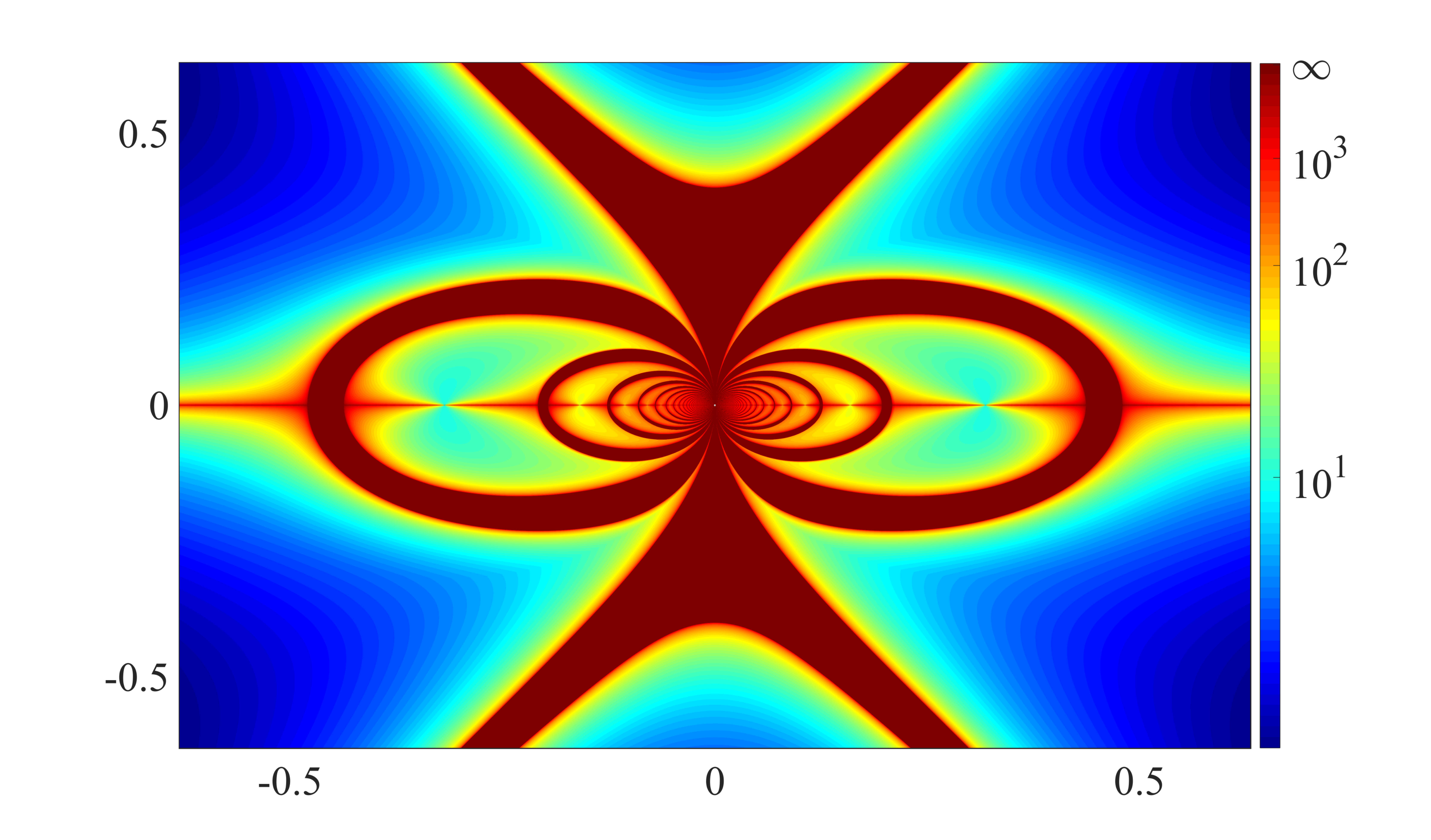}
\caption{Define the operator function $T(\omega):=A_1\sin\frac{1}{\omega}+A_2+\omega^2$ on $\mathcal{C}:=\C\setminus\{0\}$ with $\overline{W(A_1)}=(-\infty,\infty)$ and  $\overline{W(A_2)}=[0,4/25]$. In the Figure, $W_{\Omega}(T)$, $\Omega:=(-\infty,\infty)\times[0,4/25]$ is visualized in dark red and the rest shows an estimate of $\|T(\omega)\|^{-1}$. 
}\label{2fig:sinclust}
\end{figure}
\end{center}

Consider $F(\omega)$ and $G(\omega)$ defined in \eqref{multeq} and define $\epsilon_\omega$ as the least squares solution:
\begin{equation}\label{3tbound}
	\epsilon_\omega:=\inf_{\ve{\alpha}\in\Omega}|t_{\ve{\alpha}}(\omega)|=\inf_{\ve{\alpha}\in\Omega}\|F(\omega)\ve{\alpha}-G(\omega)\|.
\end{equation}
By solving the finite dimensional least squares problem \eqref{3tbound} an estimate of the resolvent of $T(\omega)$ is given in  Proposition \ref{5lastprop}.
\begin{prop}\label{5lastprop}
Let $T$ be defined as the closure of \eqref{tmulthat} and $W_\Omega^\epsilon(T)$  be defined by \eqref{5enrenc}, where $\Omega$ is a closed set satisfying $W(A_1,\hdots,A_n)\subset \Omega\subset \overline{W(A_1)}\times\hdots\times \overline{W(A_n)}$.
Denote by $\epsilon_\omega$ the least squares solution \eqref{3tbound}. Then $\omega\in W_\Omega^\epsilon(T)$ for $\epsilon>0$ if and only if $\epsilon_\omega< \epsilon$. Further, for $\omega\in\doma\setminus W_\Omega(T)$ the resolvent estimate $\|T(\omega)^{-1}\|\leq \epsilon_\omega^{-1}$ holds.
\end{prop}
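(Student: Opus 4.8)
The plan is to prove the two assertions separately. The equivalence ($\omega\in W_\Omega^\epsilon(T)$ precisely when $\epsilon_\omega<\epsilon$) is a direct unwinding of the definitions \eqref{5enrenc} and \eqref{3tbound}: recall first that $\epsilon_\omega=\inf_{\ve{\alpha}\in\Omega}|t_{\ve{\alpha}}(\omega)|$, the identification with the least-squares residual $\inf_{\ve{\alpha}\in\Omega}\|F(\omega)\ve{\alpha}-G(\omega)\|$ being nothing but $|t_{\ve{\alpha}}(\omega)|^{2}=({\rm Re}\,t_{\ve{\alpha}}(\omega))^{2}+({\rm Im}\,t_{\ve{\alpha}}(\omega))^{2}$ read off the matrix form \eqref{multeq}. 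If $\omega\in W_\Omega^\epsilon(T)$, then by \eqref{5enrenc} either $\omega\in W_\Omega(T)$, so $t_{\ve{\alpha}}(\omega)=0$ for some $\ve{\alpha}\in\Omega$ and $\epsilon_\omega=0<\epsilon$, or there is $\ve{\alpha}\in\Omega$ with $|t_{\ve{\alpha}}(\omega)|<\epsilon$, whence $\epsilon_\omega\le|t_{\ve{\alpha}}(\omega)|<\epsilon$. Conversely, $\epsilon_\omega<\epsilon$ forces some $\ve{\alpha}\in\Omega$ with $|t_{\ve{\alpha}}(\omega)|<\epsilon$ by definition of the infimum, hence $\omega\in W_\Omega^\epsilon(T)$ via \eqref{5enrenc} (the subcase $\omega\in W_\Omega(T)$ being immediate). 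There is no obstacle here.

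For the resolvent estimate, fix $\omega\in\doma\setminus W_\Omega(T)$ and assume $\epsilon_\omega>0$, the bound being vacuous otherwise. The key step is to establish $\|T(\omega)u\|\ge\epsilon_\omega\|u\|$ for all $u\in\dom(T(\omega))$. First I would prove this on $\dom(\ve{A})$: for a unit vector $u\in\dom(\ve{A})$ the point $\ve{\beta}:=(\ip{A_1u}{u},\dots,\ip{A_nu}{u})$ lies in $W(\ve{A})\subset\Omega$ by \eqref{2defwab}, and $\ip{\widehat{T}(\omega)u}{u}=g(\omega)+\sum_{j=1}^{n}\ip{A_ju}{u}f^{(j)}(\omega)=t_{\ve{\beta}}(\omega)$, so Cauchy--Schwarz and \eqref{3tbound} give $\|T(\omega)u\|\ge|\ip{T(\omega)u}{u}|=|t_{\ve{\beta}}(\omega)|\ge\epsilon_\omega$; homogeneity extends this to all of $\dom(\ve{A})$. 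Since $T(\omega)$ is by construction the closure of $\widehat{T}(\omega)$, the domain $\dom(\ve{A})$ is a core for it; so for arbitrary $u\in\dom(T(\omega))$ I would pick $u_k\in\dom(\ve{A})$ with $u_k\to u$ and $\widehat{T}(\omega)u_k\to T(\omega)u$ and pass to the limit in $\|\widehat{T}(\omega)u_k\|\ge\epsilon_\omega\|u_k\|$.

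The remaining step --- which I expect to be the only genuinely delicate one --- is to upgrade this lower bound, which by itself only yields a closed range and a bounded left inverse of $T(\omega)$, to the statement that $\omega\in\doma\setminus\sigma(T)$ and $\|T(\omega)^{-1}\|\le\epsilon_\omega^{-1}$. Here I would use $W(\ve{A})\subset\Omega$ once more: every $\ve{\alpha}\in W(\ve{A})$ satisfies $|t_{\ve{\alpha}}(\omega)|\ge\epsilon_\omega$, so $\omega\notin W^{\epsilon_\omega}_{W(\ve{A})}(T)$ by \eqref{1encge}, and hence $\omega\notin W^{\epsilon_\omega}(T)$ by Lemma \ref{5lemjoin}. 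The observation recorded just before Definition \ref{4pnr} --- that $\sigma^{\epsilon}(T)\setminus\sigma(T)\subset W^{\epsilon}(T)\setminus\sigma(T)$, so that the $\epsilon$-pseudonumerical range controls the resolvent on $\doma\setminus W^{\epsilon}(T)$ --- then, applied with $\epsilon=\epsilon_\omega$, gives $\omega\in\doma\setminus\sigma(T)$ and $\|T(\omega)^{-1}\|\le\epsilon_\omega^{-1}$, the latter also being immediate from the displayed lower bound once invertibility is in hand (write $v=T(\omega)u$). This final passage is the delicate point: it rests on the convexity of $\overline{W(T(\omega))}$ and its classical relation to $\sigma(T(\omega))$, with $0\notin\overline{W(T(\omega))}$ guaranteed by $\epsilon_\omega>0$; everything else reduces to bookkeeping with \eqref{5enrenc}, \eqref{3tbound}, \eqref{1encge} and Lemma \ref{5lemjoin}.
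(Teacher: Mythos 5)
Your first two paragraphs are correct and amount to the paper's own (much terser) proof: the equivalence is definition-chasing, and the bound $\|T(\omega)u\|\ge|\ip{T(\omega)u}{u}|\ge\epsilon_\omega$ for unit vectors is the whole argument. You are in fact more careful than the paper on one point it glosses over: for $u\in\dom(T(\omega))\setminus\dom(\ve{A})$ the quantity $\ip{T(\omega)u}{u}$ is not of the form $t_{\ve{\beta}}(\omega)$ with $\ve{\beta}\in W(\ve{A})$, so the inequality $|\ip{T(\omega)u}{u}|\ge\epsilon_\omega$ on all of $\dom(T(\omega))$ genuinely needs either your core/closure argument or the density argument behind Lemma \ref{5lemjoin}. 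That part of your write-up is a useful refinement.

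The third paragraph, however, contains a genuine error, albeit in service of a claim the proposition does not actually make. You cannot deduce $\omega\in\doma\setminus\sigma(T)$ from $0\notin\overline{W(T(\omega))}$: for an unbounded closed non-selfadjoint operator, a point outside the closure of the (convex) numerical range need not lie in the resolvent set unless one additionally knows that the resolvent set meets the relevant connected component of the complement of $\overline{W(T(\omega))}$. A standard counterexample is $S=d/dx$ on $L^2(0,1)$ with domain $H^1_0(0,1)$: here $W(S)\subset i\R$, yet $S-1$ is bounded below and not surjective, so $1\in\sigma(S)$. The observation you cite from just before Definition \ref{4pnr} cannot rescue this, since it reads $\sigma^\epsilon(T)\setminus\sigma(T)\subset W^\epsilon(T)\setminus\sigma(T)$ --- the spectrum is removed from both sides precisely because the pseudonumerical range gives no information about membership in $\sigma(T)$. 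The paper does not claim $\omega\notin\sigma(T)$ and its proof does not attempt to: the estimate $\|T(\omega)^{-1}\|\le\epsilon_\omega^{-1}$ is to be read as a bound on the (left) inverse wherever it exists, and that is exactly what your lower bound $\|T(\omega)u\|\ge\epsilon_\omega\|u\|$ already delivers. So the fix is simply to delete the invertibility step, not to repair it.
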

\begin{proof}
From the definition of $\epsilon$-pseudonumerical range, \eqref{5enrenc}, it follows directly that $\omega\in W_\Omega^\epsilon(T)$ if and only if $\epsilon> \epsilon_\omega$. Further, for all $u\in \dom(T(\omega))$, $\|T(\omega)u\|/\|u\|\geq|\ip{T(\omega)u}{u}|/\|u\|^2\geq \epsilon_\omega$, which yields that $\|T(\omega)^{-1}\|\leq \epsilon_\omega^{-1}$.
\end{proof}

\vspace{3.5mm}

{\small
{\bf Acknowledgements.} \ 
The author gratefully acknowledge the support of the Swedish Research Council under Grant No.\ $621$-$2012$-$3863$.  The author also would like to thank Christian Engstr\"om for important feedback during the preparation of this article.
}

\bibliographystyle{alpha}
\bibliography{Bibliography}

\begin{thebibliography}{AMP02}

\bibitem[AMP02]{MR1890990}
M.~Adam, J.~Maroulas, and P.~Psarrakos.
\newblock On the numerical range of rational matrix functions.
\newblock {\em Linear Multilinear Algebra}, 50(1):75--89, 2002.

\bibitem[APT02]{MR1911850}
V.~Adamjan, V.~Pivovarchik, and C.~Tretter.
\newblock On a class of non-self-adjoint quadratic matrix operator pencils
  arising in elasticity theory.
\newblock {\em J. Operator Theory}, 47(2):325--341, 2002.

\bibitem[Das73]{DAS}
A.~T. Dash.
\newblock Tensor products and joint numerical range.
\newblock {\em Proc. Amer. Math. Soc.}, 40:521--526, 1973.

\bibitem[Dav07]{MR2359869}
E.~B. Davies.
\newblock {\em Linear operators and their spectra}, volume 106 of {\em
  Cambridge Studies in Advanced Mathematics}.
\newblock Cambridge University Press, Cambridge, 2007.

\bibitem[ELT17]{MR3543766}
C.~Engstr\"om, H.~Langer, and C.~Tretter.
\newblock Rational eigenvalue problems and applications to photonic crystals.
\newblock {\em J. Math. Anal. Appl.}, 445(1):240--279, 2017.

\bibitem[ET17]{ATEN}
C.~Engstr\"om and A.~Torshage.
\newblock Enclosure of the numerical range of a class of non-selfadjoint
  rational operator functions.
\newblock {\em Integral Equations and Operator Theory}, 88(2):151--184, 2017.

\bibitem[Fit13]{MR3122346}
P.~M. Fitzpatrick.
\newblock A note on the functional calculus for unbounded self-adjoint
  operators.
\newblock {\em J. Fixed Point Theory Appl.}, 13(2):633--640, 2013.

\bibitem[GJK04]{GJK}
E.~Gutkin, E.~A. Jonckheere, and M.~Karow.
\newblock Convexity of the joint numerical range: topological and differential
  geometric viewpoints.
\newblock {\em Linear Algebra Appl.}, 376:143--171, 2004.

\bibitem[Hua85]{HUA}
D.~Huang.
\newblock Joint numerical ranges for unbounded normal operators.
\newblock {\em Proc. Edinburgh Math. Soc. (2)}, 28(2):225--232, 1985.

\bibitem[Kat95]{MR1335452}
T.~Kato.
\newblock {\em Perturbation {T}heory for {L}inear {O}perators}.
\newblock Classics in Mathematics. Springer-Verlag, Berlin, 1995.
\newblock Reprint of the 1980 edition.

\bibitem[KL78]{KL78}
M.~G. Kre{\u\i }n and H.~Langer.
\newblock On some mathematical principles in the linear theory of damped
  oscillations of continua. {I}, {II}.
\newblock {\em Integral Equations Operator Theory}, 1:364--399, 539--566, 1978.

\bibitem[LR94]{MR1293915}
C.-K. Li and L.~Rodman.
\newblock Numerical range of matrix polynomials.
\newblock {\em SIAM J. Matrix Anal. Appl.}, 15(4):1256--1265, 1994.

\bibitem[Mar88]{MR971506}
A.~S. Markus.
\newblock {\em Introduction to the spectral theory of polynomial operator
  pencils}, volume~71 of {\em Translations of Mathematical Monographs}.
\newblock American Mathematical Society, Providence, RI, 1988.

\bibitem[MM01]{MR1839848}
A.~Markus and V.~Matsaev.
\newblock Some estimates for the resolvent and for the lengths of {J}ordan
  chains of an analytic operator function.
\newblock In {\em Recent advances in operator theory ({G}roningen, 1998)},
  volume 124 of {\em Oper. Theory Adv. Appl.}, pages 473--479. Birkh\"auser,
  Basel, 2001.

\bibitem[PT00]{PSA}
P.~J. Psarrakos and M.~J. Tsatsomeros.
\newblock On the relation between the numerical range and the joint numerical
  range of matrix polynomials.
\newblock In {\em Proceedings of the {E}leventh {H}aifa {M}atrix {T}heory
  {C}onference (1999)}, volume~6, pages 20--30, 1999/00.

\bibitem[RL36]{MR1501850}
F.~Riesz and E.~R. Lorch.
\newblock The integral representation of unbounded self-adjoint transformations
  in {H}ilbert space.
\newblock {\em Trans. Amer. Math. Soc.}, 39(2):331--340, 1936.

\bibitem[RS72]{MR0493419}
M.~Reed and B.~Simon.
\newblock {\em Methods of modern mathematical physics. {I}. {F}unctional
  analysis}.
\newblock Academic Press, New York-London, 1972.

\bibitem[TE05]{MR2155029}
L.~N. Trefethen and M.~Embree.
\newblock {\em Spectra and pseudospectra: {T}he behavior of nonnormal matrices
  and operators}.
\newblock Princeton University Press, Princeton, NJ, 2005.

\bibitem[Tre08]{book}
C.~Tretter.
\newblock {\em Spectral theory of block operator matrices and applications}.
\newblock Imperial College Press, London, 2008.

\end{thebibliography}

\end{document}